\newcommand{\hidden}[1]{}
\theoremstyle{plain}
\newtheorem{thm}{Theorem}
\newtheorem{cor}{Corollary}
\newtheorem{lem}{Lemma}
\newtheorem{prop}{Proposition}
\newtheorem{lemA}{Lemma}[subsection]
\theoremstyle{definition}
\theoremstyle{claim}
\newtheorem{claim}{Claim}
\theoremstyle{fact}
\newtheorem{fact}{Fact}
\theoremstyle{remark}
\newtheorem{rem}{Remark}
\newtheoremstyle{case}{}{}{}{}{}{.}{ }{}
\theoremstyle{case}
\newcommand{\ca}[1]{\mathcal{#1}}
\newcommand{\ve}{\varepsilon}
\newcommand{\beq}{\begin{equation}}
\newcommand{\eq}{\end{equation}}
\newcommand{\what}{\widehat}
\newcommand{\R}{\mathbb{R}}
\newcommand{\N}{\mathbb{N}}
\newcommand{\Z}{\mathbb{Z}}
\newcommand{\I}{\mathbb{I}}
\newcommand{\cA}{\mathcal{A}}
\newcommand{\cP}{\mathcal{P}}
\newcommand{\cS}{\mathcal{S}}
\newcommand{\cT}{\mathcal{T}}
\def\cI{\mathcal{I}}
\def\Bad{{\rm {\bf Bad}}}
\newtheorem{thkh}{\rm {\bf Theorem \!\! KS}\!\!}
\newtheorem{thdel}{\rm {\bf Theorem \!\! DEL} (Davenport, Erd\"{o}s \& LeVeque)\!\!}
\newtheorem{thdelcorr}{\rm {\bf Corollary \!\! DEL}\!\!}
\newtheorem{thhjk}{\rm {\bf Theorem \!\! HJK}\!\!}
\newtheorem{thhjkbw}{\rm {\bf Theorem \!\! BW}\!\!}
\newtheorem{thpropa}{\rm {\bf Proposition \!\! 3A}\!\!}
\begin{document}


\title{ Inhomogeneous Diophantine Approximation on $M_0$-sets with restricted denominators}

\author{ Andrew D. Pollington \\ {\small\sc (NSF, Washington) } \and 
Sanju Velani\footnote{Research partly supported by EPSRC Programme grant EP/J018260/1} \\ {\small\sc (York) }  \and
Agamemnon Zafeiropoulos\footnote{Research supported by the Austrian Science Fund (FWF), projects F-5512 and Y-901  }  \\ {\small\sc (TU Graz) }  
 \and ~ Evgeniy Zorin\footnote{Research partly supported by EPSRC Grant EP/M021858/1} \\ {\small\sc (York) } \\ {\small\sc (York)}
}


\date{}

\maketitle

\begin{abstract}
Let $F \subseteq [0,1]$ be a set that supports a probability measure $\mu$ with the property that   $ |\widehat{\mu}(t)|  \ll (\log |t|)^{-A}$  for some constant $ A > 0 $.    Let $\ca{A}= (q_n)_{n\in \N} $  be a sequence of natural numbers.  If   $\ca{A}$ is lacunary and $A >2$, we establish a quantitative inhomogeneous Khintchine-type theorem  in which (i) the points of interest are restricted to $F$ and (ii) the denominators of the `shifted' rationals are restricted to   $\ca{A}$.  The theorem can be viewed as a natural strengthening of the fact that the sequence $(q_nx  {\rm \  mod \, } 1)_{n\in \N} $  is uniformly distributed for $\mu$ almost all $x \in F$.   Beyond lacunary, our main theorem implies the analogous quantitative result for sequences $\ca{A}$  for which the prime divisors  are restricted to a finite set of $k$ primes and $A > 2k$.
\end{abstract}

\renewcommand{\baselinestretch}{1}

\parskip=1ex

\section{Introduction and results}

\subsection{Motivation and lacunary results \label{motlacres} } We start by setting the scene.  Throughout, $F $ will be a subset of the unit interval $\I:=[0,1]$  that supports a non-atomic probability measure $\mu$.  As usual,
 the Fourier transform of $\mu$ is defined by
$$ \widehat{\mu}(t) \, := \int   e^{-2\pi it x}  \, \mathrm{d}\mu (x)      \hspace{1cm} (t\in\mathbb{R} )\, .  \vspace{2mm}$$
The set $F$ is called an {\em $M_0$-set} if $\widehat{\mu}(t) $  vanishes at infinity. It is well known that the decay rate of the Fourier transform is related to the Hausdorff dimension of the support of $\mu$. Indeed, a classical result of Frostman states that if $|\widehat{\mu}(t)|\leq c \, |t|^{-\eta/2}$ for some constants  $c, \eta>0$, then $\dim F  \geq \min \{1, \eta\} $. Further details and references  of can be found in~\cite{falc}.
Throughout,  $\ca{A}= (q_n)_{n\in \N} $  will be an increasing sequence of natural numbers.  Recall, that
$\ca{A}$ is  said to be {\em lacunary} if there exists a  constant $ K > 1 $ such that \vspace{2mm}
\begin{equation} \label{lacunary}
\frac{q_{n+1}}{q_n}  \ge K   \hspace{5mm}  (n \in \N)\, .
\end{equation}
The fundamental theorem of  Davenport, Erd\"{o}s $ \&$ LeVeque \cite{DELpaper} in the theory of uniform distribution, shows that  the generic distribution properties  of  a sequence  $(q_nx)_{n\in \N} $  with  $x$ restricted to the support of $\mu$ are intimately related to the  decay rate of $\widehat{\mu}$.

 \vspace{2mm}

 \begin{thdel} {\em Let $\mu$ be a probability  measure supported on a subset $F$ of $\, \I$. Let $\ca{A}= (q_n)_{n\in \N} $ be a sequence of natural numbers. If
\begin{equation}
\sum_{N=1}^\infty \frac{1}{N^3}  \; \sum_{m,n=1}^N  \widehat{\mu}
(h(q_m-q_n)) \; < \; \infty  \;  \label{del}
\end{equation}
 for all integers $h\ne 0$, then the sequence $
(q_nx)_{n\in \N} $ is uniformly distributed modulo one  for $\mu$--almost all $x \in F$.}
\end{thdel}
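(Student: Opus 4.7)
By Weyl's criterion it suffices to show that for every fixed nonzero integer $h$,
$$S_N(x) \;:=\; \frac{1}{N}\sum_{n=1}^N e^{2\pi i h q_n x} \;\longrightarrow\; 0 \qquad \text{for $\mu$-a.e.\ } x\in F.$$
The plan is to first compute the $L^2(\mu)$ norm of $T_N(x):=N\, S_N(x)$: expanding the square and using the definition of the Fourier transform, one obtains
$$\int_F |T_N(x)|^2\,d\mu(x) \;=\; \sum_{m,n=1}^N \widehat{\mu}\bigl(h(q_m-q_n)\bigr),$$
so the hypothesis \eqref{del} is precisely the statement $\sum_{N\geq 1} \int|T_N|^2\,d\mu\,/\,N^3 < \infty$. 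By Tonelli's theorem one then has, for $\mu$-almost every $x\in F$,
$$\sum_{N=1}^{\infty} \frac{|T_N(x)|^2}{N^3} \;<\; \infty.$$

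The heart of the argument, and the step I expect to require the most care, is a deterministic Tauberian lemma: if $(a_n)_{n\in\N}$ is any sequence of complex numbers with $|a_n|\leq 1$ and its partial sums $A_N=\sum_{n=1}^N a_n$ satisfy $\sum_N |A_N|^2/N^3 < \infty$, then $A_N/N\to 0$. I would prove this by contradiction. Suppose $|A_{N_k}|\geq \varepsilon N_k$ along some subsequence $(N_k)$. Because $|a_n|\leq 1$, the partial sums are slowly varying: for every integer $M$ in the window $[N_k,\,(1+\varepsilon/2)N_k]$ one still has $|A_M|\geq (\varepsilon/2) N_k$. Consequently $|A_M|^2/M^3$ is bounded below by a constant multiple of $1/N_k$ throughout an interval of length $\asymp N_k$, so each such block contributes a universal positive amount to $\sum_N |A_N|^2/N^3$; the presence of infinitely many such blocks would force the series to diverge, contradicting the hypothesis.

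Applied pointwise with $a_n=e^{2\pi i h q_n x}$ (which has modulus one) on the full-measure set where the Tonelli conclusion holds, the lemma yields $S_N(x)\to 0$ for $\mu$-a.e.\ $x$. Taking a countable intersection of the resulting null sets over $h\in\mathbb{Z}\setminus\{0\}$ and invoking Weyl's criterion then proves that $(q_n x)_{n\in\N}$ is uniformly distributed modulo one for $\mu$-almost every $x\in F$. The one subtle point is the Tauberian lemma itself: pure summability of $|A_N|^2/N^3$ does not by itself force $A_N=o(N)$, and one must exploit the uniform bound $|a_n|\leq 1$ to propagate a lower bound on $|A_N|$ from a single scale to a whole window of scales, which is what generates the contradiction.
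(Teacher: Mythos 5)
The paper does not actually prove Theorem DEL; it quotes it as the fundamental result of Davenport, Erd\H{o}s and LeVeque \cite{DELpaper} (and proves only the deduction of Corollary DEL from it, in Appendix A). So there is no internal proof to compare against. That said, your argument is correct, and it is, up to presentation, the classical proof of the DEL criterion.

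Each step checks out. The $L^2$ identity $\int_F |T_N|^2\,d\mu = \sum_{m,n=1}^N \widehat{\mu}(h(q_m-q_n))$ is exactly right given the paper's convention $\widehat{\mu}(t)=\int e^{-2\pi itx}\,d\mu(x)$, and the inner sum is automatically nonnegative (being $\int|T_N|^2\,d\mu$), which is what makes Tonelli applicable to conclude that $\sum_N |T_N(x)|^2/N^3 < \infty$ for $\mu$-a.e.\ $x$. The Tauberian lemma is the genuinely non-automatic step and your proof of it is sound: the key point, as you correctly emphasise, is that summability of $|A_N|^2/N^3$ alone does not force $A_N=o(N)$ (witness $A_N=N$ only at $N=2^k$, else $0$), but boundedness of the increments $|a_n|\le 1$ propagates a lower bound $|A_{N_k}|\ge \varepsilon N_k$ across a window $[N_k,(1+\varepsilon/2)N_k]$, each such window contributes a constant $c(\varepsilon)>0$ to the series, and one can thin the subsequence to make the windows disjoint. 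Passing to the countable intersection over $h\in\Z\setminus\{0\}$ and invoking Weyl completes the proof. This is the same structure as the original Davenport--Erd\H{o}s--LeVeque argument, so the proposal is both correct and standard.
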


\noindent In the case the sequence $\ca{A}$ is lacunary, the theorem gives rise to the following elegant statement.

\vspace*{2mm}

\begin{thdelcorr}
{\em Let $\mu$ be a probability  measure supported on a subset $F$ of
$ \, \I  $.  Let $\ca{A}= (q_n)_{n\in \N} $ be a lacunary sequence of natural numbers. Let  $f: \N \to \R^+$ be a decreasing function such that
\begin{equation} \label{lacdelsum}
\sum_{n=2}^\infty \frac{f(n)}{n \log n} \; < \; \infty  \;
\end{equation}
and suppose that
\begin{equation} \label{lacdelmu}
\widehat{\mu}(t)= O\left( f( |t|)  \right)   \hspace{6mm} as \quad  |t|\to\infty \, . \vspace{2mm}
\end{equation}
Then  the sequence $(q_n x)_{n \in \N}$ is uniformly
distributed modulo one  for $\mu$--almost all $x \in F$.}
\end{thdelcorr}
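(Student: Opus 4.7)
The plan is to deduce the corollary directly from Theorem DEL by verifying its summability hypothesis (\ref{del}) for every non-zero integer $h$ (with absolute convergence, which is more than enough). Fix such an $h$, and split the inner double sum in (\ref{del}) into the diagonal $m=n$ and the off-diagonal parts. The diagonal contributes $\sum_{N} N\widehat{\mu}(0)/N^{3}=\sum_{N} N^{-2}<\infty$, so the real work is on the off-diagonal terms.

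First I would exploit the lacunarity (\ref{lacunary}) to observe that for $m>n$,
$$|q_{m}-q_{n}| \;\geq\; q_{m}-q_{m-1} \;\geq\; (1-K^{-1})\,q_{m} \;=:\; c\,q_{m}.$$
Combined with the decay hypothesis (\ref{lacdelmu}) and the monotonicity of $f$, this yields $|\widehat{\mu}(h(q_{m}-q_{n}))| \ll f(c|h|q_{m})$, a bound depending only on $m$. Summing over $n<m$ introduces a factor $m$, and the symmetric range $m<n$ is handled identically. Thus
$$\sum_{N=1}^{\infty}\frac{1}{N^{3}}\sum_{m,n=1}^{N}|\widehat{\mu}(h(q_{m}-q_{n}))| \;\ll\; 1 \;+\; \sum_{N=1}^{\infty}\frac{1}{N^{3}}\sum_{m=1}^{N} m\,f(c|h|q_{m}).$$
Swapping the order of summation on the right (all terms non-negative) and using $\sum_{N\geq m}N^{-3}\asymp m^{-2}$ reduces the matter to showing $\sum_{m} f(c|h|q_{m})/m<\infty$.

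The hard part, and the step that requires the most care, is the Cauchy--condensation-style comparison between $\sum_{m} f(c|h|q_{m})/m$ and the convergent series (\ref{lacdelsum}). By lacunarity, $q_{m}\geq q_{1}K^{m-1}$, so $\log q_{m}\asymp m$ for large $m$; moreover, rescaling by the positive constant $c|h|$ preserves lacunarity. For $m$ large enough that $c|h|q_{m}\geq 2$,
$$\sum_{n=\lceil c|h|q_{m}\rceil}^{\lceil c|h|q_{m+1}\rceil}\frac{f(n)}{n\log n} \;\gg\; \frac{f(c|h|q_{m+1})}{\log q_{m+1}}\,\log\!\frac{q_{m+1}}{q_{m}} \;\gg\; \frac{f(c|h|q_{m+1})}{m+1}.$$
Since the intervals $[c|h|q_{m},\,c|h|q_{m+1}]$ are pairwise disjoint for large $m$, summing over $m$ yields
$$\sum_{m} \frac{f(c|h|q_{m})}{m} \;\ll\; \sum_{n=2}^{\infty}\frac{f(n)}{n\log n} \;<\;\infty,$$
up to finitely many initial terms which are trivially bounded. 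This establishes (\ref{del}), and Theorem DEL then gives the desired uniform distribution of $(q_{n}x)_{n\in\mathbb{N}}$ for $\mu$-almost every $x\in F$. The principal obstacle throughout is keeping track that the multiplicative rescaling by $c|h|$ destroys neither the lacunarity nor the logarithmic comparison between $\log q_{m}$ and $m$.
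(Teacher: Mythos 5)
Your overall strategy (verify hypothesis \eqref{del} of Theorem DEL with absolute convergence; split diagonal/off-diagonal; swap the order of summation) is sound, and your reduction of the problem to showing $\sum_{m} f(c|h|q_{m})/m<\infty$ is correct. The gap is in the final Cauchy-condensation step, specifically the assertion that \emph{``$q_{m}\geq q_{1}K^{m-1}$, so $\log q_{m}\asymp m$''}. Lacunarity gives only the lower bound $\log q_{m}\gg m$; there is no upper bound $\log q_{m}\ll m$ (that would require a bound of the form \eqref{doublelacunary}, which is not assumed here). Consequently the displayed inequality
\[
\frac{f(c|h|q_{m+1})}{\log q_{m+1}}\,\log\!\frac{q_{m+1}}{q_{m}} \;\gg\; \frac{f(c|h|q_{m+1})}{m+1}
\]
can fail: for example, with $q_{2j}=2^{4^{j}}$ and $q_{2j+1}=2^{4^{j}+1}$ (which is lacunary with $K=2$), taking $m=2j$ gives $\log(q_{m+1}/q_{m})/\log q_{m+1}=1/(4^{j}+1)$, which is far smaller than $1/(m+1)=1/(2j+1)$ for large $j$. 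So the chain of inequalities as written does not close.

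The gap is repairable without changing the shape of the argument, by comparing with the geometric sequence $(K^{m})$ rather than with $(q_{m})$ itself. From $q_{m}\geq q_{1}K^{m-1}$ and the monotonicity of $f$ one gets $f(c|h|q_{m})\leq f(c'K^{m})\leq f(K^{m-C})$ for a constant $C$ and all large $m$; and a standard Cauchy condensation with base $K$ shows that \eqref{lacdelsum} is equivalent to $\sum_{m}f(K^{m})/m<\infty$. Hence $\sum_{m}f(c|h|q_{m})/m<\infty$. This is essentially what the paper does in Appendix~A, but organised differently: there the lacunarity bound used is $q_{n}-q_{m}\geq K^{n-m}-1$ (a function of the \emph{gap} $n-m$ rather than of $q_{m}$), an auxiliary decreasing function $F(n):=f(K^{n}-1)$ is introduced, \eqref{lacdelsum} is observed to be equivalent to $\sum F(n)/n<\infty$, and the double sum is controlled via $\sum\sum_{1\leq m<n\leq N}F(n-m)\ll N\sum_{n\leq N}F(n)$; this routes around any need to relate $\log q_{m}$ to $m$ from above. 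Your route (bounding $|q_m-q_n|\geq(1-K^{-1})q_m$ and comparing $f(c|h|q_m)$ with the scaled series) is equally viable once the condensation step is patched as above, and has the merit of being a bit more direct; the cost is precisely the care needed at the step you flagged as the hardest.
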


\noindent The deduction of the corollary from the theorem is reasonably  straightforward.    However, for the sake of completeness and the reader's convenience we provide the details at the end of the paper in \S\ref{app} - Appendix~\!A.

\begin{rem}\label{rem1}  Reinterpreting the corollary in terms of normal numbers, it implies that  $\mu$--almost all  numbers  in $F$ are normal. Thus, it provides a useful  mechanism for proving the existence of normal numbers in a given subset of real numbers.  Indeed, Corollary~\!DEL implies that if a given set $F$ supports a probability  measure $\mu $ such that  for some $\epsilon > 0$
\begin{equation} \label{lacdelmu2}
\widehat{\mu}(t)= O\left( (\log \log |t|)^{-(1+ \epsilon)}     \right)   \hspace{6mm} {\rm as}  \quad  |t|\to\infty \, , \vspace{2mm}
\end{equation}
then $\mu$--almost all numbers in $F$ are normal. This observation is key, for example,  in showing that there are normal numbers which are badly approximable -- see Remark~\ref{rem5a} in \S\ref{cpw} below.  For completeness, we mention that  Theorem DEL (and its corollary) is valid for non-integer sequences and that this is at the heart of addressing the long standing problem of when normality to one base, not necessarily integer, implies normality to another (see \cite{morpoll} and references within).
\end{rem}

\begin{rem}\label{rem1a}
In the language of Kahane \cite{Kahane} and Lyons \cite{Lyons}, the  conclusion of the corollary is equivalent to saying that the $\mu$-measure of every lacunary $W^*$-set is zero -- see  \cite[Theorem~4]{Lyons}.   Basically, a Borel set $F\subset \I$  is a  lacunary $W^*$-set  if there exists a lacunary sequence $\ca{A} $ such that  $(q_n x)_{n \in \N}$ is not  uniformly
distributed modulo one  for any $x \in F$.
\end{rem}

Let $\gamma \in \I$ and let $B=B(\gamma, r) \subseteq \I$ denote the  ball  centred at $ \gamma$ with radius $r \leq 1/2$.  By the definition of uniform distribution, Corollary~\!DEL implies that for $\mu$--almost all $x \in F$ the sequence $(q_n x)_{n \in \N} $ modulo one `hits' the ball $B$ the `expected' number of times. In other words, for $\mu$--almost all $x \in F$
\begin{equation} \label{lacud}
\lim_{N \to \infty}  \  \frac{1}{N}  \ \# \big\{ 1\leq n \leq N : \|q_n x-\gamma \| \leq r  \big\} = 2r \, ,
\end{equation}
where  $\| \alpha \|:=\min \{|\alpha-m|:m\in\Z\}$ denotes  the distance from $\alpha\in \R $ to the nearest integer.  In this paper, we consider the situation in which the radius of the ball is allowed to shrink with time. With this in mind, let $ \psi : \N \to \I$ be a  real, positive function  and  consider the counting function
\begin{equation} \label{countdef} R(x,N) \, = \, R(x,N;\gamma,\psi, \ca{A}) := \# \big\{ 1\leq n \leq N : \|q_n x-\gamma \| \leq \psi(q_n) \big\}.
\end{equation}
As alluded to in the definition, we will often simply write $R(x,N)$ for $R(x,N;\gamma,\psi, \ca{A})$ since the other three dependencies will be clear from the context and are usually fixed.  Our first  result implies  that if $\widehat{\mu}$ decays  quickly enough,  then for $\mu$--almost all $x \in F$ the sequence $(q_n x)_{n \in \N} $ modulo one `hits' the shrinking ball $B(\gamma,\psi(q_n)) $ the `expected' number of times.

\begin{thm} \label{asmplacthm}
Let $\mu$ be a probability  measure supported on a subset $F$ of
$ \ \I \, $.  Let $\ca{A}= (q_n)_{n\in \N} $ be a lacunary sequence of natural numbers.
Let $\gamma\in\I  $ and $\psi:\mathbb{N}\rightarrow \I$ be a real, positive function.  Suppose there exists a constant $A > 2$,  so that
 \begin{eqnarray} \label{decaylac}
\widehat{\mu}(t)= O\left( (\log |t|)^{-A}     \right)   \hspace{6mm} \mbox{as}  \quad  |t|\to\infty \,  .
\end{eqnarray}
Then, for any $\varepsilon>0$, we have that
\begin{eqnarray}\label{countlacresult}
R(x,N) \, = \,  2\Psi(N)+O\Big(\Psi(N)^{2/3}\big(\log(\Psi(N)+2)\big)^{2+\varepsilon}\Big)
\end{eqnarray}
\noindent for $\mu$-almost all $x\in F$, where
\begin{equation} \label{def_Psi}
\Psi(N):=\sum_{n=1}^N\psi(q_n) \, .
\end{equation}
\end{thm}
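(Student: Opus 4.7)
The strategy is the classical $L^2(\mu)$/variance approach for counting hits of a dynamical trajectory. Write
$$R(x,N) \;=\; \sum_{n=1}^N \chi_n(x), \qquad \chi_n(x) := \mathbf{1}\bigl(\|q_n x - \gamma\|\le \psi(q_n)\bigr),$$
and aim to control (i)~the mean $\int R\, d\mu$ and (ii)~the variance $\int (R - \int R\, d\mu)^2\, d\mu$, before invoking a quantitative Gál--Koksma/Borel--Cantelli type lemma to upgrade the $L^2$ estimate to an almost-sure asymptotic.

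For the mean I would expand each $\chi_n$ via its $1$-periodic Fourier series in the variable $q_n x - \gamma$, approximating the sharp indicator from above and below by Selberg/Vaaler trigonometric polynomials to truncate the series cleanly. Integrating against $\mu$, the $h=0$ coefficient contributes exactly $2\psi(q_n)$, while every higher coefficient is of the shape $c_h^{(n)} e^{-2\pi i h\gamma}\widehat{\mu}(-h q_n)$, controlled by $|c_h^{(n)}|\ll \min(\psi(q_n),|h|^{-1})$ together with the hypothesis $|\widehat{\mu}(h q_n)|\ll(\log|h q_n|)^{-A}$. Because $A>2$, the Fourier tail is summable in $h$ and, when summed over $n\le N$, is absorbed by the $O$-term in~\eqref{countlacresult}; hence $\int R\,d\mu = 2\Psi(N) + \mathrm{(acceptable~error)}$.

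The technical heart---and the step I expect to be the main obstacle---is the variance estimate, which is precisely the quasi-independence on average lemma already advertised by the paper. Expanding both $\chi_m$ and $\chi_n$ by Fourier series, the cross term $\int \chi_m \chi_n\,d\mu - 4\psi(q_m)\psi(q_n)$ becomes a double sum
$$\sum_{(h_1,h_2)\ne(0,0)} c_{h_1}^{(m)}\,c_{h_2}^{(n)}\,e^{-2\pi i(h_1+h_2)\gamma}\,\widehat{\mu}\bigl(-(h_1 q_m+h_2 q_n)\bigr),$$
and lacunarity~\eqref{lacunary} guarantees that for $m<n$ and every nonzero $(h_1,h_2)$ the linear form satisfies $|h_1 q_m + h_2 q_n| \gg_K \max(|h_1|,|h_2|)\,q_m$, so the logarithmic Fourier decay of $\mu$ applies to every term. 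Splitting the $(h_1,h_2)$ sum dyadically and using $A>2$ to beat the double-logarithmic weights coming from the ranges $|h_1|\sim 2^j$, $|h_2|\sim 2^k$, the total contribution over pairs $m<n\le N$ is shown to be genuinely smaller than the trivial bound $\Psi(N)^2$, by a power of $\log\Psi(N)$ sufficient to drive the next step. The delicate point is that lacunarity gives the required lower bound on $|h_1 q_m + h_2 q_n|$ only when $q_n/q_m$ is bounded away from $-h_1/h_2$; recording this near-resonance case and estimating its contribution separately is where most of the work lies.

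Finally, I would apply a standard quantitative $L^2$ lemma of Gál--Koksma type (see e.g.\ Harman, \emph{Metric Number Theory}, Ch.~1) along a geometric subsequence $N_k=\lfloor\theta^k\rfloor$, and then bridge intermediate $N$ by monotonicity of $R(x,N)$ and $\Psi(N)$ in $N$. The exponent $2/3$ and the logarithmic factor $(\log(\Psi(N)+2))^{2+\varepsilon}$ in~\eqref{countlacresult} are the output of this extraction lemma applied to the variance bound from step~3: it is the sharp form of the quasi-independence lemma, rather than a crude decoupling, that makes this particular exponent attainable.
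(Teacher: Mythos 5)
Your overall framework---Fourier expansion of the indicators, first and second moment estimates against $\mu$, then a G\'al--Koksma / quantitative Borel--Cantelli extraction (Harman's Lemma~1.5, cf.\ Lemma~\ref{ebc})---matches the paper. But the plan has two genuine gaps. The claimed bound $|h_1 q_m + h_2 q_n| \gg_K \max(|h_1|,|h_2|)\,q_m$ for \emph{all} nonzero $(h_1,h_2)$ is false for lacunary sequences: if $q_m \mid q_n$ (e.g.\ $q_n = 2^n$) the linear form has nontrivial zeros, and near them it can be as small as $q_m$ even while $|h_1| \sim q_n/q_m$. You rightly flag near-resonance as requiring separate treatment, but you omit the idea that actually makes it work: in the regime $1\le|sq_m-tq_n|<q_m^\alpha$ the paper abandons the decay of $\widehat\mu$ entirely, bounds $|\widehat\mu|\le 1$, uses $\bigl|\widehat{W}^{\pm}_{q,\gamma,\varepsilon}(sq)\bigr| \ll \min\bigl(\psi(q),\,(s^2\psi(q)\varepsilon)^{-1}\bigr)$ for the smoothed Fourier coefficients, notes that for each $t$ at most two integers $s$ lie near resonance, and finally exploits lacunarity through the geometric bound $\sum_{m<n} q_m/q_n \ll 1$ (Lemma~\ref{prop_sum_S_m_n_lacunary}). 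Without this, the variance estimate does not close.

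Second, the exponent $2/3$ does not fall out of the extraction lemma applied to a variance that is merely ``smaller than $\Psi^2$ by a power of $\log$''; that would only produce an error of size $\Psi(\log\Psi)^{-C}$, not $\Psi^{2/3}$. The paper proves the variance is $O\bigl(\Psi^{4/3}\log\Psi\bigr)$ by introducing a \emph{tunable} smoothing width $\varepsilon_n$ in the upper/lower approximants (the analogue of choosing the degree of your Vaaler polynomial depending on $n$), setting $\varepsilon_n \sim \Psi(n)^{-\delta}$, and balancing the two resulting error terms: $\Psi^{2-\delta}\log\Psi$ from the non-resonant cross terms against $\Psi^{1+\delta/2}$ from the near-resonant ones. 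The choice $\delta = 2/3$ equalises the exponents and gives $\Psi^{4/3}$. Your plan uses a fixed, untuned approximant and never identifies this trade-off, so it cannot yield the stated exponent. A related point: for the algebra inside the G\'al--Koksma lemma to close, the first-moment error must be $O(\min(1,\Psi))$ rather than just $O(\Psi)$, since the quantity $\bigl(\sum_{n}\mu(E_{q_n}^{\gamma}) - 2\Psi\bigr)^2$ appears and would otherwise contribute $\Psi^2$; the paper achieves this with the separate choice $\varepsilon_n\sim\Psi(n)^{-2}$ in Lemma~\ref{lem_sum_mu_good}.
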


\begin{rem}\label{rem2}By definition,   $ \Psi(N) = rN$  when $\psi$ is the constant function $ \psi(n) = r  $ and so the theorem trivially implies \eqref{lacud}.  Indeed, it implies  \eqref{lacud}  with an error term.
However, note that to apply the theorem we need  to assume a faster  logarithmic decay rate than that given by \eqref{lacdelmu2} which suffices to conclude \eqref{lacud}.
\end{rem}

Hopefully, it is pretty clear that the theory of uniform distribution, in particular the theorem of Davenport, Erd\"{o}s $ \&$ LeVeque, is a key motivating factor towards establishing statements such as Theorem~\ref{asmplacthm}.  Another key motivating factor, which we now bring to the forefront,  is the theory of Diophantine approximation on manifolds; also known as Diophantine approximation of dependent quantities. In short, this theory refers to the study of Diophantine properties of points in $\R^n$ whose coordinates are confined by functional relations or equivalently are restricted to a sub-manifold
of $\R^n$. Over the last twenty years, the theory has developed at some considerable pace with the catalyst undoubtedly  being the pioneering work of Kleinbock \& Margulis on the Baker-Sprind\v zuk conjecture (see \cite[\S6]{durham}).
Given a real number  $\gamma\in\I  $, a function $\psi:\mathbb{N}\rightarrow \I$   and a sequence $\ca{A} = (q_n)_{n\in \N}$ of natural numbers, consider the set \begin{equation} \label{wellset} W_{\ca{A}}(\gamma; \psi):= \left\{ x\in \I: \|q_nx-\gamma \| \leq \psi(q_n) \text{ for infinitely many } n\in\mathbb{N} \right\}. \vspace{2mm} \end{equation}
By definition, $x \in W_{\ca{A}}(\gamma; \psi) $ if and only if the inequality  $$
\Big| x - \frac{p+ \gamma}{q} \Big|   \leq  \frac{\psi(q)}{q}   $$  is satisfied for infinitely many $ (p,q) \in \Z \times \ca{A} $.  In other words,  $W_{\ca{A}}(\gamma; \psi)$ is the standard set of inhomogeneous $\psi$-well approximable real numbers in which the denominators $q$  of the shifted rational approximates  $(p+ \gamma)/q$ are restricted to the set $\ca{A}$. When $\ca{A} = \N$,  we will drop the subscript
$\ca{A}$  from $W_{\ca{A}}(\gamma; \psi)$.   The fundamental theorem of Khintchine in the theory of metric Diophantine approximation,  provides an elegant criterion for the `size' of the set $W(\gamma; \psi) $  expressed in terms of Lebesgue measure $m$.


%

\medskip

\begin{thkh}
{\em Let $\gamma\in\I  $ and $\psi:\mathbb{N}\rightarrow \I$ be a real, positive  decreasing function.  Then}
$$ m\big(W(\gamma; \psi)  \big) =\left\{
\begin{array}{ll}
  0 & \mbox{if} \;\;\; \sum\limits_{n=1}^{\infty}\psi(n)  <\infty\;
      ,\\[2ex]
  1 & \mbox{if} \;\;\; \sum\limits_{n=1}^{\infty}\psi(n)
      =\infty \; .
\end{array}\right.$$
\end{thkh}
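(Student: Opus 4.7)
The plan is to prove the two halves separately by the standard Borel--Cantelli strategy, with the divergence half requiring a quasi--independence estimate and a zero--one upgrade.

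Write $A_q := \{x \in \I : \|qx - \gamma\| \le \psi(q)\}$, so that $W(\gamma;\psi) = \limsup_{q\to\infty} A_q$. Observe that $A_q$ is the intersection of $[0,1]$ with the union, over $m \in \Z$, of the intervals $[(m+\gamma-\psi(q))/q,\,(m+\gamma+\psi(q))/q]$; there are at most $q+1$ such intervals of length $2\psi(q)/q$, giving $m(A_q) = 2\psi(q) + O(1/q)$.

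For the convergence half, assume $\sum_q \psi(q) < \infty$. Since $\psi$ is decreasing, Cauchy condensation (or a trivial comparison) shows $\sum_q \psi(q)/q < \infty$ as well, so $\sum_q m(A_q) < \infty$. The convergence Borel--Cantelli lemma then yields $m\bigl(\limsup A_q\bigr) = 0$, which is exactly $m(W(\gamma;\psi)) = 0$.

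For the divergence half, assume $\sum_q \psi(q) = \infty$; without loss of generality $\psi(q) \le 1/2$. I would first prove a quasi--independence estimate of the form
\begin{equation}\label{qi}
m(A_q \cap A_{q'}) \; \ll \; m(A_q)\,m(A_{q'}) \;+\; \frac{\psi(q)+\psi(q')}{\max(q,q')}
\qquad (q \ne q').
\end{equation}
The proof of \eqref{qi} is geometric: $A_q \cap A_{q'}$ is a union of intersections of short intervals of respective widths $2\psi(q)/q$ and $2\psi(q')/q'$, and a counting of how many intervals at scale $1/q'$ can lie inside an interval at scale $1/q$ (say $q<q'$) gives at most $q\cdot q' \cdot 2\psi(q)/q + O(q) = 2q'\psi(q) + O(q)$ components for $A_{q'}$ inside $A_q$, each of length $2\psi(q')/q'$; this yields the main term $\ll \psi(q)\psi(q')$ together with the stated error. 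This is the main obstacle because the shift by $\gamma$ must be handled uniformly, but the argument is essentially a first--principles counting of congruence classes modulo $\gcd(q,q')$.

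Summing \eqref{qi} over $q,q' \le N$ gives
\begin{equation}\label{qisum}
\sum_{q,q'\le N} m(A_q\cap A_{q'}) \;\ll\; \Bigl(\sum_{q\le N}m(A_q)\Bigr)^{2} \;+\; \sum_{q\le N} m(A_q),
\end{equation}
where the last sum tends to infinity. The quantitative divergence Borel--Cantelli lemma then gives $m(W(\gamma;\psi)) > 0$. To promote positive measure to full measure, I would invoke the classical Cassels--Gallagher zero--one law for $\limsup$ sets of this type: the set $W(\gamma;\psi)$ is invariant, modulo null sets, under translation by any rational with denominator in a sufficiently rich set (or, equivalently, one applies the Gallagher zero--one law to the tail $\sigma$--algebra generated by the $A_q$), which forces $m(W(\gamma;\psi)) \in \{0,1\}$. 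Combined with positivity, this gives $m(W(\gamma;\psi)) = 1$, completing the proof. An alternative to the zero--one law is to run the quasi--independence argument inside an arbitrary subinterval $J\subset\I$ to obtain $m(W(\gamma;\psi)\cap J) \gg m(J)$ uniformly, and then conclude by the Lebesgue density theorem.
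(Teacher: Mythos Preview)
The paper does not prove Theorem~KS at all: it is stated as background and attributed to Khintchine (1924, homogeneous case) and Sz\"usz (1958, inhomogeneous case), so there is no in-paper argument to compare against. Your outline follows the standard textbook route (Borel--Cantelli, pairwise overlap estimate, divergence Borel--Cantelli, zero--one law or density), which is indeed how the result is established in, e.g., Harman's \emph{Metric Number Theory}.

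Two points in your sketch need correction, however. The minor one: the boundary error in $m(A_q)$ is $O(\psi(q)/q)$, not $O(1/q)$; with the latter the convergence sum would blow up, and your next sentence shows you really mean the former. The substantive one is your quasi-independence estimate. The correct overlap bound (this is exactly the formula the paper quotes from Harman as equation~(3.2.5)) is
\[
m(A_q\cap A_{q'}) \;=\; 4\psi(q)\psi(q') \;+\; O\!\left((q,q')\,\min\Bigl(\frac{\psi(q)}{q},\frac{\psi(q')}{q'}\Bigr)\right),
\]
and the $\gcd$ cannot be suppressed: your own counting heuristic for $q<q'$ already gives an error of order $q\psi(q')/q'$, which for $q'=2q$ is $\asymp\psi(q')$, vastly larger than the $(\psi(q)+\psi(q'))/q'$ you wrote. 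Consequently the passage from your (\ref{qi}) to your (\ref{qisum}) is not a triviality; controlling the double sum $\sum_{q<q'\le N}(q,q')\psi(q')/q'$ by $O(\Psi(N)^2)$ is a genuine step that uses the monotonicity of $\psi$ (this is where Khintchine's hypothesis earns its keep). Once that arithmetic estimate is supplied, the rest of your plan is fine.
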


\noindent To be accurate, Khintchine \cite{Khintchine24} proved the homogeneous statement (i.e. when $\gamma = 0$) in 1924. Sz\"{u}sz \cite{szusz} generalized Khintchine's result to the inhomogeneous case in 1954. Ten years later, Schmidt in his far-reaching  paper \cite{schfracparts}, established the quantitative strengthening of Theorem~KS. In short, the main theorem in \cite{schfracparts} implies that with $\psi$ decreasing and $\ca{A} = \N$,  the asymptotic counting statement \eqref{countlacresult} is valid  for $m$-almost all $x \in \I$. In the case $\ca{A}$ is a lacunary sequence of natural numbers,  the assumption that $\psi$ is decreasing can be dropped and so the precise analogue of Theorem \ref{asmplacthm} holds for Lebesgue measure $m$ -- see \cite[Theorem~7.3]{harman}. Motivated by the classical theory of Diophantine approximation on manifolds,  suppose we restrict the points of interest in $ W_{\ca{A}}(\gamma; \psi) $  to lie in some subset $F$ of $\I$.   Assume that $m(F)=0$ -- this is trivially the case if $\dim F < 1$.   Then, the  Lebesgue measure statements  just described provide no information regarding the `size' of the set of inhomogeneous $\psi$-well approximable real numbers restricted to $F$; we always have that $$ m \big(W_{\ca{A}}(\gamma; \psi)  \cap F \big) = 0 $$  irrespective of $\gamma$, $\psi$ and $\ca{A}$.   With this in mind, let $\mu$ be a  non-atomic probability measure supported on $F$. Then,  under some natural conditions on $ F$, $\mu$  and $\ca{A}$, the goal is to obtain an analogue of Theorem KS for  the `size' of $W_{\ca{A}}(\gamma; \psi)  \cap F $  expressed in terms of the  measure $\mu$.  Indeed, generically it would not be unreasonable to  expect that
\begin{equation} \label{generic}
\mu\big( W_{\ca{A}}(\gamma; \psi)  \cap F \big) =  0    \ \  \mbox{(resp. $ = 1$)}   \quad {\rm if} \quad   \textstyle{\sum\limits_{n=1}^{\infty}}\psi(q_n) < \infty    \ \  \mbox{(resp.  $=\infty$)}.
\end{equation}
Such a statement would be precisely in line with the conjectured `Dream Theorem'  \cite[\S6.1.3]{durham} for Diophantine approximation on non-degenerate manifolds. For a basic introduction  to the theory of metric Diophantine approximation including the manifold theory, see \cite{durham} and references within.

 The following statement concerning the `size' of  $W_{\ca{A}}(\gamma,\psi)  \cap F $ is a direct consequence of Theorem \ref{asmplacthm}. It simply makes use of the fact that the theorem implies that for  $\mu$-almost all $x\in F$,  the quantity $R(x,N) $ is bounded if $\Psi(N)$ is bounded and will tend to infinity if  $\Psi(N)$ tends to infinity.

 \begin{cor}  \label{corr-asmplacthm}  Let $\mu$ be a probability  measure supported on a subset $F$ of
$ \ \I \, $.  Let $\ca{A}= (q_n)_{n\in \N} $ be a lacunary sequence of natural numbers.
Let $\gamma\in\I  $ and $\psi:\mathbb{N}\rightarrow \I$ be a real, positive function.  Suppose there exists a constant $A > 2$,  so that   \eqref{decaylac} is satisfied.
Then
$$ \mu\big(W_{\ca{A}}(\gamma; \psi) \cap F \big) =\left\{
\begin{array}{ll}
  0 & \mbox{if} \;\;\; \sum\limits_{n=1}^{\infty}\psi(q_n)  <\infty\;
      ,\\[2ex]
  1 & \mbox{if} \;\;\; \sum\limits_{n=1}^{\infty}\psi(q_n)
      =\infty \; .
\end{array}\right.$$
\end{cor}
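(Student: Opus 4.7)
The plan is to deduce both halves of the dichotomy directly from Theorem~\ref{asmplacthm}, using only the tautology that $x\in W_{\ca{A}}(\gamma;\psi)$ if and only if $R(x,N;\gamma,\psi,\ca{A})\to\infty$ as $N\to\infty$. This follows immediately from the definitions \eqref{wellset} and \eqref{countdef}: $R(x,N)$ literally counts the indices $n\leq N$ that witness the condition in \eqref{wellset}, and since $\ca{A}$ is a strictly increasing sequence no index is double-counted. No Borel--Cantelli style argument is needed; the asymptotic in Theorem~\ref{asmplacthm} is strong enough to handle both sides simultaneously.

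For the convergence case, suppose $\sum_{n\geq 1}\psi(q_n)<\infty$, so that the function $\Psi(N)$ defined in \eqref{def_Psi} is uniformly bounded in $N$. Then both the main term $2\Psi(N)$ and the error term $\Psi(N)^{2/3}(\log(\Psi(N)+2))^{2+\varepsilon}$ on the right-hand side of \eqref{countlacresult} are bounded, and hence for $\mu$-almost every $x\in F$ the sequence $N\mapsto R(x,N)$ is bounded. By the tautology, such $x$ do not belong to $W_{\ca{A}}(\gamma;\psi)$, and therefore $\mu(W_{\ca{A}}(\gamma;\psi)\cap F)=0$.

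For the divergence case, suppose $\sum_{n\geq 1}\psi(q_n)=\infty$, so that $\Psi(N)\to\infty$ as $N\to\infty$. Because the exponent $2/3$ is strictly less than $1$, the error term satisfies
\[
\Psi(N)^{2/3}\bigl(\log(\Psi(N)+2)\bigr)^{2+\varepsilon}=o(\Psi(N))\quad\text{as }N\to\infty,
\]
and so \eqref{countlacresult} yields $R(x,N)\sim 2\Psi(N)\to\infty$ for $\mu$-almost every $x\in F$. By the tautology each such $x$ lies in $W_{\ca{A}}(\gamma;\psi)$, giving $\mu(W_{\ca{A}}(\gamma;\psi)\cap F)=1$.

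There is no substantive obstacle in this deduction: all the analytic content is packed into Theorem~\ref{asmplacthm}, and what remains is an elementary comparison of main term and error. The only point worth emphasizing is the role of the sub-linear exponent $2/3$ in the error estimate, which is precisely what forces the error to be negligible against the main term $2\Psi(N)$ as soon as $\Psi(N)\to\infty$; any $o(\Psi(N))$ bound would suffice, and the power $2/3$ is comfortably inside that range.
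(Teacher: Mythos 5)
Your argument is correct and is exactly the deduction the paper has in mind: the authors note immediately before the corollary that it follows from Theorem~\ref{asmplacthm} because $R(x,N)$ stays bounded for $\mu$-a.e.\ $x$ when $\Psi(N)$ is bounded, and tends to infinity for $\mu$-a.e.\ $x$ when $\Psi(N)\to\infty$. Your observation that the error exponent $2/3<1$ (or any $o(\Psi(N))$ bound) is what makes the divergence half work is the right point to emphasize.
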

\medskip

\begin{rem}\label{rem3} A  consequence of the general convergence result stated in \S\ref{bl}, is that  we can get away with $A>1$ in the  convergence case of the above corollary.  In fact,  the following decay rate suffices: for some $\epsilon > 0$ arbitrarily small
 \begin{equation*} \label{lacdelmu2sv}
\widehat{\mu}(t)= O\left( (\log |t|)^{-1} (\log \log |t|)^{-(1+ \epsilon)}     \right)   \hspace{6mm} {\rm as}  \quad  |t|\to\infty \,  . \vspace{2mm}
\end{equation*}
   \end{rem}

\begin{rem}\label{rem4}  Note that in view of Remark \ref{rem1},  whenever we are in the divergence case of the corollary  we are able to conclude that $\mu$-almost all numbers in $W_{\ca{A}}(\gamma, \psi) \cap F$ are normal.
   \end{rem}

The upshot of the results presented so far is that if $\ca{A}$ is a lacunary sequence of  natural numbers and if $\widehat{\mu}$ decays  quickly enough,  then
we are in pretty good shape with our understanding of the  set $ W_{\ca{A}}(\gamma; \psi) \cap  F$  -- both in terms of counting solutions (cf. Theorem~\ref{asmplacthm}) and size (cf.  Corollary \ref{corr-asmplacthm}).    The obvious question that now comes to mind is: what can we say if the growth of the sequence is slower than lacunary?  More specifically,  is the statement of  Theorem~\ref{asmplacthm}  valid for the sequence $ \mathcal{A}=\{ 2^a 3^b :\, a,b\in\Z_{\ge 0} \} $ if we choose the decay rate constant $A$ in \eqref{decaylac} large enough?  Before addressing this, it is worth comparing the above lacunary results with previous related works.

\vspace{2mm}

\subsubsection{Connection to previous works \label{cpw}}

\noindent Let $x=[a_1, a_2, ...]$ represent the regular continued fraction expansion of $ x \in \I$, and as usual let $p_n/q_n:=  [a_1, a_2, ...,a_n] $ denote its $n$--th convergent. Recall that $ q_n  \| q_nx   \|  \leq 1$ for any $n \in \N$.  Given $M \in \N$, let $F_M$ denote the set of real numbers in the unit interval with partial quotients  bounded above by $M$. Thus
$$ F_M := \left\{ x \in \I \, : \, x=[a_1, a_2, ...] \ \ {\rm with } \ \ a_i \,  \leq \, M \ \ \ \text{for all \ } i \in \N \right\} . $$
It is well known that any $F_M$ is a subset of the set $\Bad$ of badly approximable numbers; indeed
$$
\bigcup_{M \in \N} F_M   \  = \    \Bad:= \big\{ x \in \I :  \liminf_{q \to \infty} q \|q x \| > 0  \big\}  \, .
$$
In a pioneering  paper \cite{Kaufman},  R. Kaufman  showed that for any $M \ge 3$ the set $F_M$ is an $M_0$-set. More precisely,  he constructed a probability measure $\mu$ supported on $F_M$ satisfying the  decay property:
\begin{equation} \label{kaufdec}
\widehat{\mu}(t)= O\left( |t|^{-0.0007}  \right)   \hspace{6mm} {\rm as} \quad  |t|\to\infty \, .  \vspace{2mm}
\end{equation}
 Kaufman's construction was subsequently refined  by Queffel\'ec $\&$ Ramar\'e \cite{QR}. In particular, they showed that $F_2$ also supports a probability measure with polynomial decay.

\begin{rem}\label{rem5a}
 For any $M \ge 2$,  the Kaufman measure $\mu$ supported on $F_M$ trivially satisfies the decay condition \eqref{lacdelmu2} and so it follows (see Remark \ref{rem1}) that $\mu$--almost all numbers in $F_M$ (and thus in $\Bad$)  are normal.  This observation is attributed to R.C. Baker -- see  \cite[Appendix: Problem 45]{ten} for further details.
\end{rem}

\begin{rem}\label{rem51} By construction, for any $M \ge 2$  the Kaufman measure $\mu$ supported on $F_M$  also satisfies the following desirable property:   for any $s < \dim F_M$, there exist constants
 $c, r_0>0$ such that $
\mu(B) \leq c \, r^s \, $
for any  ball $B$ with radius $r < r_0$.  This  together  with the Mass Distribution Principle \cite[\S4.1]{falc} implies that if $F \subset \I$ is such that $\mu(F) > 0$, then
\begin{equation} \label{kaufp5}
\dim F \geq   \dim F_M \, .
\end{equation}
\end{rem}

\vspace*{2ex}
A well known conjecture of Littlewood dating back to the nineteen thirties states that 
 $$
 \liminf_{q \to \infty} q \|q x \|  \,  \|q y \| = 0   \quad  \forall  \  x,y \in \I \, . \vspace*{-1ex}
 $$
  This statement is trivially true if either $x$ or $y$ are not in $\Bad$. The decay property \eqref{kaufdec} of the Kaufman measure was successfully  utilized in \cite{PV} to prove  the following statement for badly approximable numbers:  {\em given $x \in \Bad$, there exists a subset $\mathbb{G}(x)$ of $\Bad$ with full dimension such that for any $y \in \mathbb{G}(x)$,}
 \begin{equation} \label{pveq}
  q \|q x \|  \,  \|q y \| \, \leq \, 1/\log q  \hspace{3mm}  \text{\em for\  infinitely\  many }  q\in \N \, .
\end{equation}
Trivially, such $x, y \in \Bad$ satisfy Littlewood's conjecture with an explicit `rate of approximation' of $1/\log q$. The strategy  behind the proof is simple enough.  Let  $\ca{A}= (q_n)_{n\in \N} $  be the sequence of denominators of the convergents $p_n/q_n$  of the given $ x \in \Bad$. Then  \eqref{pveq} holds for the given $x$ and any $y \in \mathbb{G}(x),$ where
 $$
 \mathbb{G}(x) := \{ y\in \Bad: \|q_ny\| \leq 1/\log q_n \text{ for infinitely many } n\in\mathbb{N} \}
 $$
 The crux of the proof (see \cite[\S3]{PV}  for the details)  boils down to showing that  for any $M \ge 3$
  \begin{equation} \label{pveqsv}
\mu \big(W_{\ca{A}}(\gamma; \psi)  \cap F_M \big) > 0   \quad {\rm with}  \   \ \gamma =0  \quad {\rm and }  \ \    \psi(q) =  1/\log q  \, ,
\end{equation}
where $ \mu$ is the Kaufman measure  supported on $F_M$. Note that by definition,  $\mathbb{G}(x) = W_{\ca{A}}(\gamma; \psi)  \cap \Bad$ and so the desired full dimension statement follows on combining \eqref{pveqsv}, \eqref{kaufp5} and the fact that $\dim F_M \to 1$ as $M\to \infty$.  The proof of  \eqref{pveqsv} given in \cite{PV} makes  use of the explicit  choices of  $\psi$ and $\gamma$, and the fact that the Fourier transform of the Kaufmann measure has polynomial decay. It also explicitly exploits the fact that for any $x \in \Bad$, the sequence $\ca{A}= (q_n)_{n\in \N} $ arising from its convergents $p_n/q_n$ is not only lacunary but satisfies the additional property that
\begin{equation} \label{doublelacunary}
 \frac{q_{n+1}}{q_n}  \le K^*   \hspace{5mm}  (n \in \N)\, ,
\end{equation}
where $K^* \! :=K^*(x) > 1$ is a constant. Corollary \ref{corr-asmplacthm} improves the work carried out in  \cite{PV}  on four key fronts:
\begin{itemize}
  \item[(a)] It gives a full $\mu$-measure statement rather than just a positive $\mu$-measure statement.
  \item[(b)] It is for any  function $\psi:\mathbb{N}\rightarrow \I$  and any  $\gamma \in \I$ rather than the explicit choices given by~\eqref{pveqsv}.
  \item[(c)] It is for any  lacunary sequence $\ca{A}$ of natural numbers rather than those satisfying  the additional property~\eqref{doublelacunary}.
  \item[(d)] It  is for any subset $F$ of
$ \I $ that supports a  probability  measure $\mu$ with sufficient logarithmic transform  decay  rather than requiring polynomial decay.
\end{itemize}
\noindent Indeed, Corollary \ref{corr-asmplacthm} restricted to the homogeneous case ($\gamma=0$) establishes the zero-one law claim made in \cite[\S3.3]{PV}. Indeed, with $\psi(q) = 1/ (\log q \, \log \log q)$ it enables us to replace   \eqref{pveq}  by the stronger statement that
 $
\liminf_{q \to \infty} q \, \log q  \, \|q x \|  \,  \|q y \| = 0  $.
 To the best of our knowledge,  the   quantitative strengthening  of the corollary, namely Theorem \ref{asmplacthm}, is completely new. Within the context of Littlewood's conjecture it implies the following statement: {\em given $x \in \Bad$ and $\gamma \in \I$, there exists a subset $\mathbb{G}(x,\gamma)$ of $\Bad$ with full dimension such that for any $y \in \mathbb{G}(x,\gamma)$,}
 \begin{equation} \label{pveqcount}
   \# \big\{ 1\leq q \leq N :  q \, \|q x \|  \,  \|q y - \gamma \| \, \leq \, 1/\log q    \big\} \gg \log \log N \, .
\end{equation}
With $\gamma =0$, this establishes the  followup quantitative claim made in \cite[\S3.3]{PV}.   To  some extent, in the inhomogeneous case  it would be more natural and desirable to  establish   \eqref{pveqcount} in which $\mathbb{G}(x,\gamma)$ is defined as a subset of
$$
\Bad_\gamma := \big\{ x \in \I :  \liminf_{q \to \infty} q \|q x - \gamma \| > 0  \big\}  \, .
$$
The point is that for $ y \notin \Bad_\gamma $, the  corresponding inhomogeneous
version of Littlewood's conjecture; namely
 $$\liminf_{q \to \infty} q \, \|q x \|  \,  \|q y - \gamma \| = 0  \, \quad  \text{ for all  }  x,y \in \I \, ,
 $$ is trivially true.  The major obstacle preventing us from establishing the desired inhomogeneous statement  \eqref{pveqcount} with  $\mathbb{G}(x,\gamma) \subset \Bad_\gamma $ is that we are unable to prove the existence of a probability  measure  $\mu $  supported on a subset  of $\Bad_\gamma$  with  transform decay as in Theorem~\ref{asmplacthm}.  With this in mind, we suspect that a statement of the following type is true.

 \begin{claim}
 Let $\gamma \in \I$.  Then for any sufficiently small constant $c > 0$, the set $$\Bad_\gamma(c):= \big\{ x \in \I :   q \|q x - \gamma \| > c  \ \ \forall  \ q \in \N  \big\}  \,  $$  supports a probability measure $\mu$ with  $ \widehat{\mu}$ satisfying   \eqref{decaylac} for some $A > 2$.
 \end{claim}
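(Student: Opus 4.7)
The plan is to adapt Kaufman's construction of Fourier-decaying measures on the set $F_M$ of badly approximable numbers to the inhomogeneous setting, where the forbidden homogeneous rationals $p/q$ are replaced by the shifted rationals $(p+\gamma)/q$. The output will be a Cantor-like subset $E\subseteq\Bad_\gamma(c)$ and a probability measure $\mu$ supported on $E$ whose Fourier transform satisfies $|\widehat{\mu}(t)|\ll (\log|t|)^{-A}$ for some prescribed $A>2$.

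The construction proceeds inductively. For $c>0$ sufficiently small, a volume count shows that at each dyadic scale $Q$ the total length of the forbidden balls $B\big((p+\gamma)/q,\,c/q^2\big)$ with $q\in[Q,2Q]$ is $O(c)$, uniformly in $\gamma$. Consequently, any interval of length $\gg Q^{-2}$ contains a definite proportion of subintervals of length $\asymp Q^{-2}$ that avoid the forbidden set at scale $Q$. Choosing a rapidly growing sequence of scales $Q_{k+1}=Q_k^{M}$ for a large integer $M$, I would build a nested family $\mathcal{E}_1\supset\mathcal{E}_2\supset\cdots$ of unions of closed intervals, with each $I\in\mathcal{E}_{k-1}$ containing a large bank of candidate children in $\mathcal{E}_k$, and equip $E:=\bigcap_k\bigcup_{I\in\mathcal{E}_k} I$ with the natural probability measure $\mu$ distributing mass uniformly among the children at each stage.

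The core of the argument is the Fourier estimate. For $|t|$ large, select $k$ with $Q_k^{2}\le|t|<Q_{k+1}^{2}$, and following Kaufman's randomization scheme, replace the deterministic tree by a random one in which each child at level $k$ is chosen uniformly from its bank of candidates. Exploiting orthogonality of the random choices, I would bound $\mathbb{E}\,|\widehat{\mu}(t)|^{2}$ by a transfer-operator-style recursion in $k$. Since the target decay is merely logarithmic, a comparatively mild cancellation at each of the $\asymp\log\log|t|$ relevant levels suffices, and the product of these per-level gains should yield the required rate $(\log|t|)^{-A}$ for any $A>2$; a Borel--Cantelli argument over a generic realization then produces a single deterministic $\mu$ with the stated decay.

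The main obstacle, and the reason the statement is recorded only as a claim, is that the inhomogeneous family $\{(p+\gamma)/q\}$ lacks the self-similar structure of the homogeneous rationals. In Kaufman's original argument the Gauss map provides a genuine IFS on $F_M$ and reduces the Fourier estimate to a transfer operator whose spectral gap follows from classical exponential-sum bounds. Here the shifts $\gamma/q$ vary with $q$ and disrupt this structure, so the required per-level cancellation must instead be extracted from a direct analysis of the shifted exponential sums $\sum_{p}e\bigl(-t(p+\gamma)/q\bigr)$, uniformly in $\gamma$. Establishing this cancellation, and then assembling the per-level gains into $(\log|t|)^{-A}$ decay with $A>2$, is the most delicate part of the proposed proof.
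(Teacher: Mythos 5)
The paper does not prove Claim~1: it is recorded as a conjecture, and the authors state explicitly that they do not even know whether $\Bad_\gamma(c)$ is an $M_0$-set. There is therefore no paper proof against which to compare your sketch. Your outline --- a Kaufman/Kahane-style random Cantor construction inside the complement of the neighbourhoods $B\big((p+\gamma)/q,\,c/q^2\big)$, with Fourier decay extracted from a randomized recursion over $\asymp\log\log|t|$ levels --- is a sensible strategy, and you correctly identify the central obstacle: $\Bad_\gamma(c)$ has no Gauss-map self-similarity, so Kaufman's transfer-operator machinery for $F_M$ does not carry over. But the sketch is not a proof, and you acknowledge this.

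Two remarks on the gap you leave. First, the difficulty is framed slightly incorrectly: the sum $\sum_{p}e\bigl(-t(p+\gamma)/q\bigr)$ factors as $e(-t\gamma/q)\sum_{p}e(-tp/q)$, so $\gamma$ contributes only a unimodular phase and the sum has exactly the homogeneous magnitude. No new cancellation needs to be (or can be) extracted from these sums; the genuine obstruction is structural --- the exclusion set $\bigcup_{p,q}B\big((p+\gamma)/q,\,c/q^2\big)$ is not invariant under any natural map, so there is no recursion of Gauss-map type, and the positions of the intervals in your nested family carry no arithmetic self-similarity from which a transfer operator with a spectral gap could be built. Second, the core step --- bounding $\mathbb{E}\,|\widehat{\mu}(t)|^{2}$ by a product of per-level gains while \emph{simultaneously} forcing the random tree to land inside $\Bad_\gamma(c)$ --- is asserted, not established. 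Showing that these two requirements are compatible and that the accumulated gain actually reaches $(\log|t|)^{-A}$ with $A>2$ is precisely the open content of the claim, so the proposal is a plausible roadmap but does not resolve it.
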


\noindent Trivially,  $ \Bad_\gamma(c) $ is a  subset of $   \Bad_\gamma$  for any $c >0$.  We suspect that the above claim is true with $ \widehat{\mu}$ satisfying polynomial decay as in the homogeneous case.  Having said this, as far as we are aware,  we do not even know that the sets $ \Bad_\gamma(c) $ are  $M_0$-sets. Establishing this is of independent interest and  can be regarded as a first step towards Claim 1.

Note that if Claim 1  is true then  it follows (see Remark \ref{rem1}) that $\mu$--almost all numbers in $\Bad_\gamma(c)$ are normal.  Proving the existence of normal numbers in $\Bad_\gamma$ is an interesting problem in its own right.  To the best of our knowledge, currently we do not know of any such numbers when $\gamma$ is irrational.

 \begin{claim}
For any $\gamma \in \I$,   there exists at least one normal number in $\Bad_\gamma$.
 \end{claim}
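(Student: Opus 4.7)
My plan is to target a substantially weakened form of Claim~1 that would already suffice for Claim~2. Specifically, I would attempt to construct, for any $\gamma \in \I$ and some small $c = c(\gamma) > 0$, a probability measure $\mu$ supported on a subset of $\Bad_\gamma(c)$ whose Fourier transform satisfies the very mild decay $\widehat{\mu}(t) = O((\log \log |t|)^{-(1+\epsilon)})$ as $|t|\to\infty$. As observed in Remark~\ref{rem1}, this is exactly the decay rate identified by Corollary~DEL as enough to force $\mu$-almost every point of $\supp \mu$ to be normal (apply the corollary to the sequence $q_n = b^n$ for each base $b \ge 2$). A single such point then witnesses Claim~2, since $\supp \mu \subseteq \Bad_\gamma(c) \subseteq \Bad_\gamma$.

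\textbf{Construction.} I would build $\mu$ by a Cantor scheme modelled on Kaufman and Queffélec--Ramaré but adapted to the shift by $\gamma$. Pick a rapidly growing sequence $Q_0 < Q_1 < \cdots$, set $K_0 = \I$, and obtain $K_n$ from $K_{n-1}$ by deleting, from each surviving sub-interval, every $\gamma$-shifted resonance interval $\{x : \|qx - \gamma\| < c/q\}$ with $q \in (Q_{n-1},Q_n]$, then partitioning the residue into roughly equal pieces and placing uniform mass on them. The total length deleted at stage $n$ is at most $\sum_{q\leq Q_n} 2c/q \ll c \log Q_n$, so for $c$ small enough and the $Q_n$ suitably sparse the construction survives with positive mass. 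The weak limit $\mu$ is then supported on $\bigcap_n K_n \subseteq \Bad_\gamma(c)$.

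\textbf{Main obstacle.} The crux is establishing the required Fourier decay once the deletion has been $\gamma$-shifted. In the homogeneous case, the Kaufman measure inherits polynomial decay from the multiplicative structure of continued fractions and the Gauss map, neither of which has an obvious analogue under an irrational shift; this is precisely the reason the authors cannot prove Claim~1 and, as they remark, why it is not even known whether $\Bad_\gamma(c)$ is an $M_0$-set at all. My hope for the present much milder target is that randomisation alone suffices: if at each stage the retained sub-intervals inside the residue are chosen at random from among the admissible positions, one might hope to prove the $(\log\log|t|)^{-(1+\epsilon)}$ bound for $\mathbb{E}|\widehat{\mu}(t)|^2$ via a second-moment calculation exploiting quasi-independence across scales, and then extract a single good realisation by Chebyshev and Borel--Cantelli. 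If the probabilistic route breaks down, a natural fallback is a fully explicit Cantor construction that interleaves resonance removal with direct digit-frequency constraints, manufacturing a normal number in $\Bad_\gamma$ by hand and bypassing Fourier analysis altogether at the cost of losing any $\mu$-measure information.
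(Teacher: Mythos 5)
This statement is \emph{Claim 2} of the paper, and the paper does not prove it. Immediately before stating it, the authors write that, to the best of their knowledge, they do not know of a single normal number in $\Bad_\gamma$ when $\gamma$ is irrational; the claim is recorded as a conjecture and left open. There is therefore no ``paper's own proof'' to compare your argument against.

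More importantly, your proposal is not a proof either, and the gap you flag yourself is exactly the gap that the paper identifies as the obstruction. The construction you outline (removing $\gamma$-shifted resonance intervals $\{x:\|qx-\gamma\|<c/q\}$ across dyadic blocks of $q$ and pushing a Cantor measure through) is the obvious inhomogeneous analogue of the Kaufman/Queff\'elec--Ramar\'e scheme, and the hard step is precisely the one you defer: proving any quantitative decay of $\widehat\mu$. You propose a second-moment bound on $\mathbb{E}|\widehat\mu(t)|^2$ ``exploiting quasi-independence across scales,'' but you do not exhibit that quasi-independence, and the paper's own Claim~1, together with the remark that it is not even known whether $\Bad_\gamma(c)$ is an $M_0$-set, is an explicit statement that this is the open problem, not a routine calculation. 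In the homogeneous case the decay comes from the multiplicative/Gauss-map structure of continued fractions; as you note, this structure has no known replacement after an irrational shift by $\gamma$. Asserting that ``randomisation alone suffices'' does not close this. The fallback you mention --- an explicit Cantor construction ``interleaving resonance removal with direct digit-frequency constraints'' --- is a single sentence with no content: normality demands equidistribution to \emph{every} integer base simultaneously, and there is no indication of how such digit constraints could coexist with the infinitely many resonance constraints defining $\Bad_\gamma(c)$. In short, your write-up is an accurate description of a plausible attack on an open problem, but it contains no proof, and it should not be presented as one.
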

\noindent We suspect that the set of normal numbers in $\Bad_\gamma$ is of full dimension irrespective of the choice of $\gamma$. This is true in the homogenous case.

\vspace*{2ex}

 The paper \cite{Haynes}  further develops the ideas from \cite{PV}.   Within the setting of the inhomogeneous version of Littlewood's conjecture stated above,  Haynes, Jensen $\&$  Kristensen establish  the following result.

\medskip

\begin{thhjk}
{\em Fix $\epsilon  > 0$ and let  $X$ be a countable subset of $\Bad$.  Then there exists a subset $\mathbb{G}(X)$ of $\Bad$ with full dimension such that for any $y \in \mathbb{G}(X)$, $x \in X$, and $\gamma \in \I$}
 \begin{equation} \label{pveqhjk}
  q \|q x \|  \,  \|q y - \gamma \| \, \leq \, 1/(\log q)^{1/2 - \epsilon}   \hspace{3mm}  \text{\em for\  infinitely\  many }  q\in \N \, .
\end{equation}
\end{thhjk}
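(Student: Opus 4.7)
The plan is to adapt the strategy of \cite{PV} described in \S\ref{cpw}, using the quantitative count supplied by Theorem~\ref{asmplacthm} to upgrade the argument from a single shift $\gamma=0$ to the full inhomogeneous family $\gamma\in\I$. Fix $M\ge 3$ and let $\mu=\mu_M$ be a Kaufman measure supported on $F_M\subset \Bad$; by \eqref{kaufdec} its Fourier decay is polynomial, hence \eqref{decaylac} holds for any $A>2$. For each $x\in X$, let $\ca{A}_x=(q_n(x))_{n\in\N}$ denote the sequence of denominators of the convergents of $x$: since $x\in\Bad$, this sequence is lacunary (in fact satisfies \eqref{doublelacunary}) and the theory of continued fractions gives $q_n(x)\,\|q_n(x)x\|\le 1$ for every $n$. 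Set $\psi(q):=(\log q)^{-(1/2-\epsilon)}$, which is decreasing, and note that $\Psi_x(N):=\sum_{n=1}^N\psi(q_n(x))$ grows polynomially in $N$ (essentially as $N^{1/2+\epsilon}$). In particular the error term $\Psi_x(N)^{2/3}(\log \Psi_x(N))^{2+\epsilon'}$ of Theorem~\ref{asmplacthm} is $o(\Psi_x(N))$.

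Choose any countable dense $\Gamma\subset\I$, for instance $\Gamma=\mathbb{Q}\cap\I$, and apply Theorem~\ref{asmplacthm} to each triple $(x,\gamma_0)\in X\times \Gamma$ with shift $\gamma_0$ and approximation function $\psi/3$. For every such pair this yields a $\mu$-full measure set $E(x,\gamma_0)\subseteq F_M$ on which
$$R(y,N;\gamma_0,\psi/3,\ca{A}_x)\;=\;\tfrac{2}{3}\Psi_x(N)+o(\Psi_x(N)).$$
Since $X\times\Gamma$ is countable, the intersection $E_M:=\bigcap_{(x,\gamma_0)\in X\times\Gamma}E(x,\gamma_0)$ still has full $\mu$-measure in $F_M$.

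Now fix $y\in E_M$, $x\in X$ and an arbitrary $\gamma\in\I$. For each large $N$, use density of $\Gamma$ to pick $\gamma_0=\gamma_0(N)\in\Gamma$ with $|\gamma-\gamma_0|\le \tfrac{2}{3}\psi(q_N(x))$. Every index $n\le N$ contributing to $R(y,N;\gamma_0,\psi/3,\ca{A}_x)$ satisfies $\|q_n(x)y-\gamma_0\|\le \psi(q_n(x))/3$; since $\psi$ is decreasing and $q_n(x)\le q_N(x)$, the triangle inequality gives
$$\|q_n(x)y-\gamma\|\;\le\;\psi(q_n(x))/3+\tfrac{2}{3}\psi(q_N(x))\;\le\;\psi(q_n(x)).$$
Combined with $q_n(x)\,\|q_n(x)x\|\le 1$, this yields \eqref{pveqhjk}. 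Because $R(y,N;\gamma_0(N),\psi/3,\ca{A}_x)\ge \Psi_x(N)/3\to\infty$, the collection of indices $n$ satisfying \eqref{pveqhjk} is unbounded in $N$ and therefore infinite. For the dimension assertion, Remark~\ref{rem51} gives that any $\mu_M$-positive subset of $F_M$ has Hausdorff dimension at least $\dim F_M$, so $\dim E_M\ge \dim F_M$. Setting $\mathbb{G}(X):=\bigcup_{M\ge 3}E_M\subseteq \Bad$ then gives $\dim \mathbb{G}(X)\ge \sup_M \dim F_M=1$.

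The main obstacle is uniformity in the shift $\gamma\in\I$: Theorem~\ref{asmplacthm} delivers a $\mu$-null exceptional set for each \emph{fixed} $\gamma$, while the theorem to be proved quantifies over the uncountable set $\I$. The resolution is the moving-target discretization above, which is made possible precisely by the quantitative error bound in Theorem~\ref{asmplacthm}: at each scale $N$ we obtain order $\Psi_x(N)$ hits rather than just one, providing enough slack to replace an arbitrary $\gamma$ by a countable-net approximant $\gamma_0(N)$ at resolution $\psi(q_N(x))$ without losing the infinitely-often conclusion.
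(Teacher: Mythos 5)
Your proposal does not reproduce the actual proof of Theorem HJK (which, as the discussion in \S\ref{cpw} indicates, is obtained in~\cite{Haynes} by a direct discrepancy estimate via the Erd\H{o}s--Tur\'{a}n inequality together with an effective Borel--Cantelli lemma; the Erd\H{o}s--Tur\'{a}n inequality is precisely what supplies uniformity in the shift $\gamma$, and the exponent $1/2-\epsilon$ is a consequence of the loss it introduces). Your route is genuinely different --- you try to bootstrap the fixed-$\gamma$ result Theorem~\ref{asmplacthm} to all $\gamma$ simultaneously via a countable dense net of shifts and a moving-target triangle inequality --- but it has a real gap.

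The problem is the lack of uniformity, over $\gamma_0\in\Gamma$, of the asymptotic you extract from Theorem~\ref{asmplacthm}. For each \emph{fixed} pair $(x,\gamma_0)$, the theorem gives a $\mu$-full set $E(x,\gamma_0)$ on which
$R(y,N;\gamma_0,\psi/3,\ca{A}_x)=\tfrac23\Psi_x(N)+o_{y,\gamma_0}(\Psi_x(N))$,
where both the implied constant in the error and the threshold $N_0=N_0(y,\gamma_0)$ past which the count exceeds $\Psi_x(N)/3$ depend on $\gamma_0$. Your moving-target step then uses, at each large $N$, a \emph{different} $\gamma_0(N)\in\Gamma$ chosen at resolution $\psi(q_N(x))$, which tends to $0$; for irrational $\gamma$ these approximants keep changing. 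The claim ``$R(y,N;\gamma_0(N),\psi/3,\ca{A}_x)\ge\Psi_x(N)/3\to\infty$'' therefore requires $N\ge N_0(y,\gamma_0(N))$ for all large $N$, and nothing in Theorem~\ref{asmplacthm} controls $\sup_{\gamma_0\in\Gamma}N_0(y,\gamma_0)$ --- it may well be infinite for $\mu$-a.e.\ $y$. This is exactly the obstruction the paper itself flags right after \eqref{pveqhjksv0}: for each fixed $\gamma$ one has $\mu(\mathbb{G}_M(x,\psi_\lambda,\gamma))=1$, but the uncountable intersection \eqref{county23*} over all $\gamma$ cannot be taken, and no part of Theorem~\ref{asmplacthm} recovers the uniformity in $\gamma$ that the Erd\H{o}s--Tur\'{a}n approach provides. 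A further sanity check: nowhere in your argument do you use $\lambda=1/2-\epsilon<1/2$; as written it would equally ``prove'' \eqref{pveqhjksv} for every $\lambda\in(0,1]$ (since $\Psi_x(N)\to\infty$ and the error remains lower order there too), which by Remark~\ref{rem9sv} is an open problem that would strengthen \eqref{pveqhjk} to a $1/\log q$ rate. That the argument would, if valid, settle a stated open question confirms a step must be failing, and the failing step is the uniformity implicit in the moving-target inequality.
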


\noindent A few comments are in order.  As we shall see in a moment, the fact that the statement is true for a countable subset $X \subset \Bad$ is not the meat.    The strength of the theorem lies in the fact that set  $\mathbb{G}(X)$ is independent of the inhomogeneous factor $\gamma$.  With this in mind,  we briefly describe the key step in the proof of Theorem HJK.
Fix $\lambda \in (0,1]$, and let $\psi_\lambda (q) :=  1/(\log q)^{\lambda} $ and $\Psi_\lambda(N):=\textstyle{\sum_{n=1}^N\psi_\lambda(q_n)} $.  Note  that  $\Psi_\lambda(N) \to \infty $ as $N \to \infty$ for  $\lambda \le 1$. It is this  that is exploited  in the proof rather than actually `counting solutions'  -- see Remark \ref{rem9sv} below.    Fix $M \ge 3 $, and let  $x  \in F_M$ and   $\ca{A}= (q_n)_{n\in \N} $  be the sequence of natural numbers arising from the denominators of the convergents $p_n/q_n$  of $ x$.  Finally, let $ \mu$ be the Kaufman measure  supported on $F_M$ and
\begin{equation} \label{county}
 \mathbb{G}_M(x,\psi_\lambda ) := \big\{ y \in  F_M:   R(y,N;\gamma,\psi_\lambda, \ca{A})  \geq  2 \Psi_\lambda(N) \  \forall  \, N \gg 1  ,  \, \forall \,  \gamma \in \I \big\}   \, ,
\end{equation}
where $R$ is the counting function given by \eqref{countdef}.   The key to the proof of the theorem lies in showing that for any $ \lambda  \in (0,1/2)$,
 \begin{equation} \label{pveqhjksv}
  \mu(\mathbb{G}_M(x,\psi_\lambda ) ) = 1 \, .
\end{equation}
It follows that $\mu \big( \cap_{x \in X} \mathbb{G}_M(x,\psi_\lambda) \big) = 1$ for
any countable set $X \subset F_M$. In turn, this leads to a theorem that is
valid for any countable set of badly approximable numbers.
The proof of \eqref{pveqhjksv} makes nifty use of the Erd\H{o}s-Tur\'{a}n inequality \cite[Corollary 1.1]{ten}  and a standard effective version of the (divergent) Borel-Cantelli Lemma  (see Lemma 2.3  in \cite{harman}) which  we shall exploit too.  As in \cite{PV},   the proof also  makes use of the explicit  nature  of  the function  $\psi_\lambda$ and the fact that the Fourier transform of the Kaufmann measure has polynomial decay.

 In order to compare the above work of  Haynes, Jensen $\&$  Kristensen  \cite{Haynes} to our work, fix $\gamma \in \I$ and with
 \eqref{county} in mind,  let
 \begin{equation} \label{county23}
 \mathbb{G}_M(x,\psi_\lambda, \gamma ) := \big\{ y \in  F_M:   R(y,N;\gamma,\psi_\lambda, \ca{A})  \geq  2 \Psi_\lambda(N) \  \forall  \, N \gg 1  \big\}   \, .
\end{equation}
Then,
\begin{equation} \label{county23*}
  \mathbb{G}_M(x,\psi_\lambda)   =   \bigcap_{\gamma \in \I}   \mathbb{G}_M(x,\psi_\lambda, \gamma )   \, .
\end{equation}
A simple consequence of Theorem \ref{asmplacthm} is that for any $\gamma \in \I$ and  any $ \lambda  \in (0,1]$,
 \begin{equation} \label{pveqhjksv0}
  \mu(\mathbb{G}_M(x,\psi_\lambda, \gamma ) ) = 1 \, .
\end{equation}
Indeed, this statement  with $\lambda=1$ is at the heart of establishing \eqref{pveqcount}.  Also, note that to apply Theorem  \ref{asmplacthm} we only need $\mu$ to have sufficient logarithmic transform  decay.  However, since the intersection in \eqref{county23*} is uncountable we are unable to conclude the full measure statement \eqref{pveqhjksv}.  Just to reiterate, the approach taken in \cite{Haynes}  yields \eqref{pveqhjksv} as long as  $\lambda < 1/2 $.  Unfortunately, it is not at all clear how to modify the method  for $\lambda \geq 1/2$  even within the context of establishing the weaker statement \eqref{pveqhjksv0}.

\begin{rem}\label{rem8sv}  Since  $\Psi_\lambda(N) \to \infty $ as $N \to \infty$ for  $\lambda \le 1$, it follows via  \eqref{pveqhjksv} that for any  $\lambda \in (0,1/2)$
$$\mu \big(  \bigcap_{\gamma \in \I}W_{\ca{A}}(\gamma, \psi_\lambda) \cap F_M   \big)  = 1    \, , $$
and so
$ \mu \big( W_{\ca{A}}(\gamma, \psi_\lambda) \cap F_M   \big)  = 1 \, $
for any  $\gamma \in \I$.
Clearly,  Corollary  \ref{corr-asmplacthm} implies the latter full measure statement for any $\lambda \in (0,1]$ but it cannot  be exploited to  yield  the former.
\end{rem}

 \begin{rem}\label{rem9sv}
In \cite[\S4]{Haynes}, the authors suggest that the approximating  function   $1/(\log q)^{1/2 - \epsilon}$ in the right hand side of the inequality in \eqref{pveqhjk} can in fact be replaced by $1/\log q$.  This would follow if we could prove \eqref{pveqhjksv} with $\lambda =1$.  In fact, it would suffice to prove this for a weaker form of the set $\mathbb{G}_M(x,\psi_\lambda )$ in which the growth condition  on  the counting function $R$  appearing in \eqref{county} is  replaced by the condition that  $ R(y,N;\gamma,\psi_\lambda, \ca{A}) \to \infty $ as $N \to \infty$.

\end{rem}

For the sake of completeness, we finish with a short discussion on Salem sets. For both consistency and simplicity, we restrict the discussion to subsets $F$ of $\I$.  The {\em Fourier dimension} of $F \subset \I$ is defined by
\begin{equation*}
\dim_F F  := \sup\left\{0\leq \eta\leq 1:  \exists \, \mu \in M_1^+(F)\, \text{ {\rm with }} \hspace{1mm}  \widehat{\mu}(t)= O(|t|^{-\eta/ 2})  \text{ {\rm as }} |t|\to\infty \  \right\}. \vspace{2mm}
\end{equation*}
where $M_1^+(F)$ denotes the set of all positive Borel probability measures with support in  $F$.  A simple consequence of the classical result of Frostman  mentioned right at the start of the paper (namely,  if $\widehat{\mu}(t)= O(|t|^{-\eta/ 2}) $  then $\dim F  \geq \min \{1, \eta\} $), is that the Fourier dimension is bounded above by the Hausdorff dimension. A set $F$ with   $\dim_F F = \dim F $ is called a {\em Salem set}. Observe that Theorem  \ref{asmplacthm} and its corollary are applicable to any Salem set with strictly positive dimension.  In fact, this is also true for the non-lacunary results presented in the next section.
To the best of our knowledge, it is unknown whether or not the badly approximable  subsets $F_M $   are Salem sets.  However, the story is quite different for well approximable subsets of $\I$. To start with, given $\tau \geq 1 $, let $\psi_\tau (q):= q^{-\tau}$ and consider the classical homogeneous set $W(0, \psi_\tau)  $ of $ \tau$-well approximable numbers. By definition, this corresponds to $W_{\ca{A}}(\gamma, \psi)$  given by \eqref{wellset} with $ \ca{A}=  \N$, $\gamma = 0$  and $\psi=\psi_\tau$.   By Dirichlet's theorem,  $W(0, \psi_\tau) = \I$ when $\tau = 1$ and for $\tau > 1$,  a classical theorem of Jarnik and Besicovitch (see \cite[\S1.3.2]{durham})  states that $\dim   W(0, \psi_\tau)   =  2/(\tau + 1) $.
In another elegant paper \cite{K}, Kaufman constructed a probability measure $\mu$ supported on  $W(0, \psi_\tau)$ for any $\tau > 1$ satisfying the  decay property:
\begin{equation} \label{kaufdec5}
\widehat{\mu}(t)=  |t|^{-\frac{1}{\tau+1}} \  o\left(\log|t| \right)   \hspace{6mm} {\rm as} \quad  |t|\to\infty \, .  \vspace{2mm}
\end{equation}
The upshot is that  $W(0, \psi_\tau) $ is a Salem set for any $\tau > 1$.
 Bluhm \cite{Bl} subsequently generalised this statement to  arbitrary decreasing  functions $\psi$. In short, for $\psi$-well approximable sets $W(0, \psi) $ the quantity $\tau$ in the Jarnik-Besicovitch theorem and in \eqref{kaufdec5} is replaced by the quantity $\lambda(\psi) := \liminf_{q \to \infty} -\log \psi(q)/ \log q \,  $; namely the lower order at infinity of the function $1/\psi$.  In the last couple of years,  Hambrook \cite{Hambrook} and independently Zafeiropoulos \cite{malaka} have extended  Bluhm's work to the inhomogeneous setup.  Thus, for any $\gamma \in \I$ and any  real, positive decreasing  function $\psi:\mathbb{N}\rightarrow \I$ with $\lambda(\psi) >1$, we now know that the set  $W(\gamma, \psi) $ is a Salem set.  Hambrook  actually does a lot more;  for example,  he obtains results for the higher dimensional analogues of the  restricted `denominators' sets $W_{\ca{A}}(\gamma, \psi) $.

 \vspace{2mm}

The main purpose of this section was to compare our results for lacunary sequences (namely, Theorem \ref{asmplacthm} and Corollary \ref{corr-asmplacthm}) with previous related works.
 As far as we are aware, if the growth of the sequence  is slower than lacunary, then nothing is known even within the context of Corollary \ref{corr-asmplacthm}, let alone Theorem \ref{asmplacthm}.

\subsection{Beyond lacunarity \label{bl} }

\noindent With reference to Corollary \ref{corr-asmplacthm}, in the case of convergence we are able to prove the following stronger statement for general sequences.

\begin{thm}  \label{mainCONV}  Let $\mu$ be a probability  measure supported on a subset $F$ of
$ \ \I \, $.  Let $\ca{A}= (q_n)_{n\in \N} $ be a sequence of natural numbers.
Let $\gamma\in\I  $ and $\psi:\mathbb{N}\rightarrow \I$ be a real, positive function. Suppose that at least one of the following two conditions is satisfied:
\begin{equation} \label{cond2_2}
\sum_{n=1}^{\infty} \; \max_{k\in\Z / \{ 0\} }|\hat{\mu}(kq_n)|< \infty
\end{equation}
\begin{equation} \label{cond2}
\sum_{n=1}^{\infty}   \, \sum_{k\in\Z / \{ 0\} }  \frac{|\widehat{\mu}(kq_n)|}{|k|}< \infty .
\end{equation}
Then
\begin{equation} \label{convcondition}
\mu(W_{\ca{A}}(\gamma; \psi))=0 \quad   \mbox{ if }  \quad   \sum\limits_{n=1}^{\infty}\psi(q_n)< \infty.
\end{equation}
\end{thm}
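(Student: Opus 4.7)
Set $r_n := \psi(q_n)$ and
$$A_n \, := \, \{x \in \I : \|q_n x - \gamma\| \leq r_n\},$$
so that $W_{\ca{A}}(\gamma;\psi) = \limsup_{n \to \infty} A_n$. By the convergence Borel--Cantelli lemma it is enough to show that $\sum_n \mu(A_n) < \infty$. The key observation is that $\mathbf{1}_{A_n}(x) = \chi_{r_n}(q_n x - \gamma)$, where $\chi_r$ is the $1$-periodic extension of $\mathbf{1}_{[-r, r]}$, so the whole argument reduces to estimating $\int \chi_{r_n}(q_n x - \gamma)\, d\mu(x)$ via the Fourier transform of $\mu$.

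\textbf{The easy case \eqref{cond2}.} Expand $\chi_{r_n}$ in its Fourier series, whose coefficients $c_k(r) = \sin(2\pi k r)/(\pi k)$ satisfy $|c_k(r)| \leq 1/(\pi |k|)$ for $k\neq 0$, and $c_0(r)=2r$. Then (justifying the interchange either by Abel summation or by the smoothing device used below)
$$\mu(A_n) \; = \; 2\psi(q_n) \; + \; \sum_{k \neq 0} c_k(\psi(q_n))\, e^{-2\pi i k \gamma}\, \widehat{\mu}(-kq_n),$$
which immediately gives
$$\sum_n \mu(A_n) \; \leq \; 2 \sum_n \psi(q_n) \; + \; \frac{1}{\pi} \sum_n \sum_{k \neq 0} \frac{|\widehat{\mu}(kq_n)|}{|k|} \; < \; \infty.$$

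\textbf{The case \eqref{cond2_2}.} The bound $|c_k(r)| \leq 1/(\pi|k|)$ is insufficient, since $\sum_{k \neq 0}|c_k(r)|$ diverges logarithmically and \eqref{cond2_2} offers no $1/|k|$ weighting. The plan is to replace the rough indicator $\chi_{r_n}$ by a smooth $1$-periodic majorant $\phi_n$ with
$$\phi_n \,\geq\, \chi_{r_n}, \qquad \widehat{\phi_n}(0) \,=\, O(r_n), \qquad |\widehat{\phi_n}(k)| \,\leq\, C \min\!\Bigl(r_n,\; \tfrac{1}{r_n\, k^2}\Bigr) \quad (k \neq 0).$$
A concrete choice is the $1$-periodisation of the trapezoid of height $1$ with top $[-r_n, r_n]$ and base $[-2r_n, 2r_n]$; its Fourier transform factorises as $3r_n\,\mathrm{sinc}(3r_n\xi)\,\mathrm{sinc}(r_n\xi)$, yielding the bound above. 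Splitting the sum at $|k| \sim 1/r_n$, the two branches of the $\min$ match to give
$$\sum_{k \neq 0} |\widehat{\phi_n}(k)|\, |\widehat{\mu}(kq_n)| \; \leq \; C' \max_{k \in \Z\setminus\{0\}} |\widehat{\mu}(kq_n)|,$$
so
$$\mu(A_n) \; \leq \; \int \phi_n(q_n x - \gamma)\, d\mu(x) \; \leq \; C\, \psi(q_n) \; + \; C' \max_{k \in \Z\setminus\{0\}} |\widehat{\mu}(kq_n)|.$$
Summing and applying \eqref{cond2_2} together with $\sum_n \psi(q_n) < \infty$ finishes the proof.

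\textbf{Main obstacle.} The substantive point is case \eqref{cond2_2}: one cannot work with the indicator directly, and the smooth majorant must be calibrated so that its entire Fourier tail contributes only a single copy of $\max_{k \neq 0} |\widehat{\mu}(kq_n)|$, and not a $\log(1/r_n)$-multiple of it. The dyadic split at $|k| \sim 1/r_n$ is exactly what balances the ``flat'' bound $O(r_n)$ against the ``decaying'' bound $O(1/(r_n k^2))$ for $|\widehat{\phi_n}(k)|$, and this calibration is the only delicate step; everything else is routine Borel--Cantelli and Fourier inversion.
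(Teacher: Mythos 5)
Your proposal is correct and takes essentially the same approach as the paper: both bound $\mu(E_{q_n}^\gamma)$ by integrating a $1$-periodic trapezoidal majorant of the indicator (the paper's $\chi_{\delta,\varepsilon}^+$ with $\varepsilon=1$ is precisely your trapezoid with top $[-r_n,r_n]$ and base $[-2r_n,2r_n]$), exploit that the resulting Fourier coefficients vanish off multiples of $q_n$ and satisfy the bound $\min\bigl(O(r_n),\,O(1/(r_n k^2))\bigr)$, split the sum at $|k|\sim 1/r_n$, and apply the convergence Borel--Cantelli lemma. The paper handles your case \eqref{cond2} by taking the geometric mean of the two branches of the $\min$ to obtain $|\widehat{\phi_n}(k)|\le 1/|k|$ (rather than invoking the raw indicator's coefficients and an Abel/smoothing caveat), but this is cosmetic and your remark that the smoothing device covers it is exactly right.
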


\medskip

 It is easily verified that Theorem \ref{mainCONV} implies the convergence case of Corollary \ref{corr-asmplacthm}.  Indeed, if $\mathcal{A}=(q_n)_{n=1}^{\infty}$ is lacunary and $\mu$ satisfies condition \eqref{decaylac}  for some $A > 1$, then
 \begin{eqnarray*}
\sum_{n=1}^{\infty} \; \max_{k\in\Z / \{ 0\} }|\hat{\mu}(kq_n)|  &\ll&
\sum_{n=1}^{\infty} \; \frac{1}{ (\log q_n)^{A} }   \\
&\ll& \sum_{n=1}^{\infty}   \frac{1}{n^{A}}  \ <   \ \infty
\end{eqnarray*}
\noindent and so condition  \eqref{cond2_2} of Theorem \ref{mainCONV} is satisfied. The upshot is that Theorem \ref{mainCONV} implies the convergence case of Corollary \ref{corr-asmplacthm} under the weaker assumption that  \eqref{decaylac} is satisfied  for some $A > 1$.  As pointed out above in Remark \ref{rem3}  we can actually  get away with even slightly less.

\medskip

In the case of divergence and within the context of Theorem \ref{asmplacthm}, we are able to go beyond lacunarity if we restrict the prime divisors of the elements in the given integer sequence  to lie in a finite set.  More precisely, fix $k \in \N$ and let
\begin{equation} \label{def_cS}
\cS:=\left\{p_1,\dots,p_k \right\}
\end{equation}
be a set of $k$ distinct primes $p_1,\ldots, p_k$.  In turn, given $\cS$ let
\begin{equation} \label{kprimes}
\cA_{\cS}:=\left\{\textstyle{\prod\limits_{i=1}^k}   \, p_i^{a_i}  \, : \,  a_1,\ldots, a_k \in\Z_{\geq 0}\right\}
\end{equation}
 be  the set of positive integers with prime divisors restricted to $\cS$.  In other words,  $\cA_{\cS}$ is precisely the set of smooth numbers over $\cS$. Obviously,  if $ p$ is the largest prime among $\cS$  then by definition, every integer in $\cA_{\cS}$ is $p$-smooth.

 The following constitutes our main result for sequences that are not necessarily lacunary.

\begin{thm} \label{main_q}
 Let $\mu$ be a probability  measure supported on a subset $F$ of
$ \ \I \, $.  Let
\begin{equation*} 
\ca{A}= (q_n)_{n\in \N} \subseteq \cA_{\cS}
\end{equation*}
be an increasing  sequence of natural numbers.
Let $\gamma\in\I  $ and $\psi:\mathbb{N}\rightarrow \I$ be a real, positive function.  Suppose there exists a constant $ A > 2k $
so that \eqref{decaylac} is satisfied.
Then, for any $\epsilon>0$ the counting function $R(x,N)$ satisfies
\begin{eqnarray}\label{countFSPresult}
R(x,N) =  2\Psi(N)+O\Big(\Psi(N)^{1/2}\left(\log\big(\Psi(N)+2\right)\big)^{2+\varepsilon}\Big)
\end{eqnarray}
for $\mu$-almost all $x\in F$, where  $ \Psi(N) $ is given by
\eqref{def_Psi}.
\end{thm}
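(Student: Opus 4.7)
The plan is to follow an $L^2$ variance method combined with an effective Borel--Cantelli argument, using the density of $\cS$-smooth integers as a quantitative substitute for lacunarity.

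First I would expand each indicator in a Fourier series on the torus. Writing $\chi_n(x) := \mathbf{1}_{B(\gamma,\psi(q_n))}(q_n x \bmod 1)$, we have $R(x,N) = \sum_{n=1}^{N} \chi_n(x)$ and
$$\chi_n(x) \;=\; 2\psi(q_n) \;+\; \sum_{h\neq 0} c_h^{(n)}\, e^{2\pi i h q_n x},$$
with $|c_h^{(n)}| \leq \min\!\bigl(2\psi(q_n),\, 1/(\pi|h|)\bigr)$. Setting $D_N(x) := R(x,N) - 2\Psi(N)$, this gives a clean Fourier representation and
$$\int_F |D_N(x)|^2\, d\mu(x) \;=\; \sum_{m,n=1}^{N}\; \sum_{h_1, h_2\neq 0} c_{h_1}^{(m)}\,\overline{c_{h_2}^{(n)}}\;\widehat{\mu}\!\left(h_2 q_n - h_1 q_m\right).$$

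The second step is to bound this variance by $O\bigl(\Psi(N)\,(\log(\Psi(N)+2))^{C}\bigr)$ for some explicit constant $C$. I would split the sum into diagonal contributions (where $h_1 q_m = h_2 q_n$, so $\widehat{\mu}$ is evaluated at $0$ and contributes $1$) and genuine off-diagonal ones. The diagonal part, handled directly using the Fourier coefficient bounds, gives $O(\Psi(N)\log\Psi(N))$. For the off-diagonal part I would apply $|\widehat{\mu}(t)| \ll (\log|t|)^{-A}$ together with the key counting input: since $\#\{q \in \cA_\cS : q \leq X\} \asymp (\log X)^{k}/k!$, the $n$th element satisfies $\log q_n \gg n^{1/k}$, and hence $(\log q_n)^{-A} \ll n^{-A/k}$. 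The assumption $A > 2k$ is exactly the exponent needed to make the resulting double sum over pairs $(m,n)$ convergent, yielding the desired variance bound modulo polylog factors.

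Once the variance estimate is in place, I would invoke an effective Borel--Cantelli lemma, e.g.\ Lemma 2.3 of \cite{harman}, which converts an $L^2$ bound of the shape $\|D_N\|_2^2 \ll \Psi(N)\,(\log(\Psi(N)+2))^{C}$ into the almost-sure estimate
$$D_N(x) \;=\; O\!\left(\Psi(N)^{1/2}\,(\log(\Psi(N)+2))^{2+\varepsilon}\right)$$
for $\mu$-almost every $x \in F$, which is precisely \eqref{countFSPresult}. With the variance at hand, this step is essentially automatic.

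The main obstacle is the off-diagonal variance bound. Unlike the lacunary case of Theorem \ref{asmplacthm}, two elements of $\cA_\cS$ may be multiplicatively very close, so one cannot force $|h_2 q_n - h_1 q_m|$ to be large by a gap argument. The $\cS$-smoothness must be used in its quantitative form: the polynomial growth of $\log q_n$ in $n$, arising from the $(\log X)^k$ density of smooth numbers, is the combinatorial ingredient that turns the logarithmic decay of $\widehat{\mu}$ into a summable tail. Organizing the Fourier double sum by the dyadic size of $|h_2 q_n - h_1 q_m|$, possibly grouping $(m,n)$ by their common $\cS$-part, and then extracting the sharp dependence on $A$ versus $k$, is where the hypothesis $A > 2k$ is consumed in its tightest form and constitutes the technical heart of the proof.
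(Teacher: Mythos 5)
Your top-level strategy — Fourier expansion, a variance bound, then the effective Borel--Cantelli lemma — is the right framework, and you correctly identify the growth estimate $\log q_n \gg n^{1/k}$ (so $(\log q_n)^{-A}\ll n^{-A/k}$) as the way smoothness feeds the logarithmic decay of $\widehat{\mu}$ into a summable tail. However, the proposal has two genuine gaps that sit precisely at the points you flag as "the technical heart."

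First, you observe that one cannot force $|h_2 q_n - h_1 q_m|$ to be large by a gap argument, and propose "organizing by dyadic size" and "grouping by common $\cS$-part" — but you never supply a mechanism that actually controls the near-resonance terms where $|h_2 q_n - h_1 q_m|$ is small and nonzero. On these terms $|\widehat{\mu}(h_2 q_n - h_1 q_m)|$ may be close to $1$, so the decay hypothesis is useless, and the density argument alone does not help. The paper handles this with a transcendence-theoretic input: the Baker--W\"ustholz theorem on linear forms in logarithms is used (Proposition \ref{prop_sum_S_m_n_F}) to show that if $q_m,q_n\in\cA_{\cS}$ and $1\le|sq_m-tq_n|<q_m^{\alpha}$, then necessarily $s>m^{12}$. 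This $\alpha$-separation property is what makes the corresponding Fourier coefficients $\widehat{W}^{+}_{q_m,\gamma,\varepsilon_m}(sq_m)$ small and renders the near-resonance contribution harmless. Without something of this strength, your dyadic organization has nothing to bite on.

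Second, you assert that the exact-resonance (or "diagonal") contribution, where $h_1 q_m = h_2 q_n$, is "handled directly using the Fourier coefficient bounds" and yields $O(\Psi(N)\log\Psi(N))$. This is not automatic for $m\ne n$. Parametrizing the solutions of $h_1 q_m = h_2 q_n$ via $d=(q_m,q_n)$ shows these terms contribute a gcd sum of the shape $\sum_{m<n}(q_m,q_n)\min\bigl(\psi(q_m)/q_m,\psi(q_n)/q_n\bigr)$, and to get this down to $O(\Psi(N))$ one needs the genuinely non-trivial fact (Theorem \ref{thm_gcd}) that $\sum_{m=1}^{n-1}(q_m,q_n)/q_n$ is bounded by a constant depending only on $k$ for sequences in $\cA_{\cS}$. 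That bound is where the arithmetic structure of smooth numbers is used a second time; the density estimate alone does not deliver it, and without it the error term in \eqref{countFSPresult} cannot be brought down to $\Psi(N)^{1/2}$. In short, the proposal has the right scaffolding but is missing both the linear-forms-in-logarithms lemma for near-resonances and the gcd-sum bound for exact resonances, each of which is an essential and independent ingredient.
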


Theorem \ref{main_q}  answers the question raised at the end of \S\ref{motlacres} concerning  the specific  sequence $ \mathcal{A}=\{ 2^a 3^b \,:\, a,b\in\Z_{\ge 0} \} $.   Namely, the analogue of  Theorem~\ref{asmplacthm} is valid for this specific  $\cA$ if the decay rate constant $A$ in \eqref{decaylac} is strictly larger than four.    Note that in the case of one prime $p$, the corresponding sequence $\cA=\{ p^n : n\in \N \}  $ is trivially lacunary. However, due to the fact that $\cS$ is a finite set of primes the above theorem  does not cover arbitrary lacunary sequences unlike Theorem~\ref{asmplacthm}.  Nevertheless,  Theorem \ref{main_q}  does give a better error term than Theorem~\ref{asmplacthm}.

 The following statement  is a direct consequence of Theorem~\ref{main_q}. It follows in exactly the same way as Corollary~\ref{corr-asmplacthm} follows from Theorem~\ref{asmplacthm}.

\begin{cor}  \label{main_K}  Let $\mu$ be a probability  measure supported on a subset $F$ of
$ \ \I \, $.  Let $\ca{A}= (q_n)_{n\in \N} \subseteq \cA_{\cS} $ be an increasing  sequence of natural numbers.
Let $\gamma\in\I  $ and $\psi:\mathbb{N}\rightarrow \I$ be a real, positive   function.  Suppose there exists a constant $A >  2 k $,  so that \eqref{decaylac} is satisfied.
Then
$$ \mu\big(W_{\ca{A}}(\gamma; \psi) \cap F \big) =\left\{
\begin{array}{ll}
  0 & \mbox{if} \;\;\; \sum\limits_{n=1}^{\infty}\psi(q_n)  <\infty\; ,\\[2ex]
  1 & \mbox{if} \;\;\; \sum\limits_{n=1}^{\infty}\psi(q_n)  =\infty \; .
\end{array}\right.$$
\end{cor}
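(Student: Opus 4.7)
The plan is to deduce Corollary \ref{main_K} directly from the quantitative counting statement \eqref{countFSPresult} of Theorem \ref{main_q}, exactly mirroring the passage from Theorem \ref{asmplacthm} to Corollary \ref{corr-asmplacthm}. The key observation is that by definition
\[
W_{\ca{A}}(\gamma;\psi) \;=\; \bigl\{\, x \in \I \,:\, R(x,N;\gamma,\psi,\ca{A}) \to \infty \text{ as } N \to \infty \,\bigr\},
\]
so it suffices to determine, for $\mu$-almost every $x$, whether the monotonically non-decreasing quantity $R(x,N)$ stays bounded or tends to infinity. Theorem \ref{main_q} ties this dichotomy cleanly to the behaviour of $\Psi(N) = \sum_{n=1}^N \psi(q_n)$.

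For the convergence case, assume $\sum_{n=1}^\infty \psi(q_n) < \infty$, so that $\Psi(N)$ is uniformly bounded by some constant $C$. Applying Theorem \ref{main_q} (which is available because $\ca{A} \subseteq \cA_{\cS}$ and $A > 2k$), we get for $\mu$-a.e.\ $x \in F$ a constant $C(x)$ with
\[
R(x,N) \;\leq\; 2\Psi(N) + C(x)\,\Psi(N)^{1/2}\bigl(\log(\Psi(N)+2)\bigr)^{2+\varepsilon} \;\leq\; 2C + C(x)\,C^{1/2}(\log(C+2))^{2+\varepsilon},
\]
uniformly in $N$. Thus $R(x,N)$ is bounded in $N$, so $R(x,\infty) < \infty$, and such $x$ lie outside $W_{\ca{A}}(\gamma;\psi)$; this gives $\mu(W_{\ca{A}}(\gamma;\psi) \cap F) = 0$.

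For the divergence case, assume $\sum_{n=1}^\infty \psi(q_n) = \infty$, so $\Psi(N) \to \infty$ with $N$. Then the error term in \eqref{countFSPresult} is of lower order than the main term $2\Psi(N)$, since $\Psi(N)^{1/2}(\log(\Psi(N)+2))^{2+\varepsilon} = o(\Psi(N))$. Consequently, for $\mu$-a.e.\ $x \in F$ one has $R(x,N) \sim 2\Psi(N) \to \infty$, so $\|q_n x - \gamma\| \leq \psi(q_n)$ for infinitely many $n$, placing $x$ in $W_{\ca{A}}(\gamma;\psi)$. Hence $\mu(W_{\ca{A}}(\gamma;\psi) \cap F) = 1$.

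There is essentially no obstacle here; all the genuine difficulty is absorbed into Theorem \ref{main_q}. The only detail worth emphasising is that the exceptional $\mu$-null set (on which \eqref{countFSPresult} fails) and the implied constant $C(x)$ in the error term depend on $x$ but not on $N$, which is precisely what lets the convergence argument conclude that $R(x,N)$ is globally bounded rather than merely $o(1)$ in some averaged sense.
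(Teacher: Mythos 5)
Your argument is exactly the paper's: the authors deduce Corollary~\ref{main_K} from Theorem~\ref{main_q} "in exactly the same way as Corollary~\ref{corr-asmplacthm} follows from Theorem~\ref{asmplacthm}," namely by observing that for $\mu$-almost all $x \in F$ the (non-decreasing, integer-valued) quantity $R(x,N)$ is bounded precisely when $\Psi(N)$ is bounded, and tends to infinity precisely when $\Psi(N)$ does. Your note that the implied constant in the $O(\cdot)$ depends on $x$ but not on $N$ is the right point to flag, and the proof is correct.
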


\medskip

It can be verified that any given increasing sequence $\mathcal{A}=(q_n)_{n=1}^{\infty}  \subseteq \cA_{\cS} $ satisfies the growth condition \begin{equation}\label{hyp_q}
\log q_n \,  >  \,  C  \,  n^{1/B}    \qquad \forall \ n \ge 2 \, ,
\end{equation}
for some constants  $B \ge  1$  and $C > 0$. Indeed, we can always get away with
$B=k$ and $C= (\log 2)/2 $ irrespective of the choice of $\mathcal{A} \subseteq \cA_{\cS} $ since $\cA_{\cS}$ satisfies \eqref{hyp_q}  with these choices of $B$ and $C$ (see Appendix~B of \S\ref{app}). Hence,
\begin{eqnarray*}
\sum_{n=1}^{\infty} \; \frac{1}{ (\log q_n)^{A} }
&\ll& \sum_{n=1}^{\infty}   \frac{1}{n^{A/k}}  \ <   \ \infty \,
\end{eqnarray*}
for any $A > k$ and it follows via  Theorem \ref{mainCONV} that we only require that $ A > k$  in the convergence case of the above corollary. The stronger condition that $ A > 2k$  on the decay rate constant $A$ is required to establish Theorem~\ref{main_q}.  This  condition then carries over to the corollary since the divergence case is directly deduced from the theorem.  We suspect that within the context of the corollary,  $ A > k$ would suffice even in the divergence case.

\medskip

\begin{rem} \label{rem19}
 As the following  example indicates, it is  necessary to impose some condition (either directly or indirectly) on the growth of the sequence  in Theorem \ref{main_q} and indeed its corollary.    With reference to \S\ref{cpw}, let $\mu$ be the Kaufman measure supported on the badly approximable subset $F_M \subset \Bad $ for some $ M \geq 2$.   Let $ \cA = \N $, $\gamma = 0$ and $ \psi(q) =  1/ (q \log q) $.  Then the sum appearing in Corollary~\ref{main_K} diverges but in view of the definition of   $\Bad $,  we trivially have that $$
 W_{\ca{A}}(0; \psi) \cap F_M  = \emptyset \, .
 $$
 \end{rem}

 \medskip

 We will deduce Theorem \ref{main_q} from a general statement for sequences satisfying
the growth condition \eqref{hyp_q} and the following  `separation' condition.
Let $\ca{A}= (q_n)_{n\in \N} $  be an increasing sequence of natural numbers and let $\alpha\in(0,1)$ be a real number.  We say that $ \ca{A} $  is \emph{$\alpha$-separated} if there exists a constant  $m_0  \in \N$  so that for any integers $ m_0\leq m<n$, if
$$
1 \leq |sq_m-t q_n|< q_m^{\alpha}
$$
for some $s,t\in\N$,   then
$$
s>m^{12}.
$$

\noindent Note that when  $\mathcal{A}$ is lacunary  then the growth condition \eqref{hyp_q} is trivially satisfied with $B=1$ but $\mathcal{A}$ need not be $\alpha$-separated\footnote{For example, consider the sequence $\ca{A}= (q_n)_{n\in \N} $ given by  $q_n:=2^n+\varepsilon_n$, where $\varepsilon_n=1$ for even $n$ and $\varepsilon_n=0$ for odd $n$. This sequence is clearly  lacunary with $q_{n+1}/q_n\geq 8/5$ (the minimum of the left hand side is attained at $n=2$). Moreover,  $q_{n}-2q_{n-1}=1$  whenever $n$ is even and so $\cA$ is not $\alpha$-separated.}.  Thus, we can not deduce Theorem~\ref{asmplacthm} directly from the following result.

\medskip


 \begin{thm} \label{mainSV}
Let $\mu$ be a probability  measure supported on a subset $F$ of
$ \ \I \, $.  Let $\ca{A}= (q_n)_{n\in \N} $ be an increasing sequence of natural numbers that (i) satisfies the growth condition  \eqref{hyp_q} for some constants  $B \ge  1$  and $C > 0$, and (ii) is $\alpha$-separated.
Let $\gamma\in\I  $ and $\psi:\mathbb{N}\rightarrow \I$ be a real, positive function.  Suppose there exists a constant
\begin{equation} \label{ie_Delta_2B}
A  \, >  \,  2  B \, ,
\end{equation} so that \eqref{decaylac} is satisfied.
Then, for any $\epsilon>0$ the counting function $R(x,N)$ satisfies
\begin{equation} \label{countFSPresultsv}
\begin{array}{ll}
 R(x,N)  \ = \   2\Psi(N)  + O\Big(\left(\Psi(N)+E(N)\right)^{1/2}\left(\log(\Psi(N)+E(N)+2\right)\big)^{2+\varepsilon}\Big)
\end{array}
\end{equation}
for $\mu$-almost all $x\in F$,  where  $ \Psi(N) $ is given by
\eqref{def_Psi} and
\begin{equation} \label{error2sv}
E(N) \ := \ \mathop{\sum\sum}_{1\leq m<n\leq N} (q_m,q_n)  \ \min\left(\frac{\psi(q_m)}{q_m},\frac{\psi(q_n)}{q_n}\right)  \, .
\end{equation}
\end{thm}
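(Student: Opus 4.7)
The strategy is the standard Fourier/second-moment approach for quantitative Khintchine-type theorems, organised around a Gal--Koksma/quantitative Borel--Cantelli lemma (see e.g.\ \cite[Lemma~2.3]{harman}). First I would set $f_n(x)$ to be the indicator of the set $\{x\in\I:\|q_n x-\gamma\|\leq \psi(q_n)\}$, so that $R(x,N)=\sum_{n\leq N}f_n(x)$, and expand each $f_n$ as a Fourier series on the torus (after a standard smoothing if needed to handle the jumps of the indicator). This rewrites every moment of $R$ as a sum of values of $\what{\mu}$ at integer combinations of the $q_n$.

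For the mean,
$$\int f_n\,d\mu \ = \ 2\psi(q_n)+\sum_{k\neq 0}\frac{\sin(2\pi k\psi(q_n))}{\pi k}\,e^{-2\pi i k\gamma}\,\what{\mu}(kq_n),$$
and combining $|\sin(2\pi k\psi)/(\pi k)|\leq \min(2\psi,1/|k|)$ with $|\what{\mu}(kq_n)|\ll (\log|kq_n|)^{-A}$ and $\log q_n>Cn^{1/B}$ shows the error per $n$ is $O(n^{-A/B})$. Hence $\sum_{n\leq N}\int f_n\,d\mu=2\Psi(N)+O(1)$ as soon as $A>B$, producing the main term of \eqref{countFSPresultsv}.

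The crux is the variance estimate. For $m<n$, expanding $f_m f_n$ as a double Fourier series and integrating gives
$$\int f_m f_n\,d\mu \ = \ 4\psi(q_m)\psi(q_n)+L(m,n)+S(m,n),$$
where $L(m,n)$ collects the terms with exactly one $k_i=0$ and $S(m,n)=\sum_{k_1,k_2\neq 0}c_{k_1}(q_m)c_{k_2}(q_n)\,e^{-2\pi i(k_1+k_2)\gamma}\,\what{\mu}(k_1 q_m+k_2 q_n)$. The linear terms $L(m,n)$ cancel against the product of the means up to a product of $O(n^{-A/B})$ errors that is summable over $m<n\leq N$, so it suffices to control $\sum_{m<n}S(m,n)$. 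I would split $S(m,n)$ into the resonant part $k_1 q_m+k_2 q_n=0$ and its complement. The resonant part forces $k_1=jq_n/g,\ k_2=-jq_m/g$ with $g=(q_m,q_n)$ and $j\neq 0$; summing $|c_{k_1}c_{k_2}|$ over $j$ yields exactly $O\bigl(g\min(\psi(q_m)/q_m,\psi(q_n)/q_n)\bigr)$, so these contributions aggregate to $O(E(N))$. For the non-resonant part I split further: when $|k_1 q_m+k_2 q_n|\geq q_m^{\alpha}$, the decay $|\what{\mu}|\ll (\log q_m)^{-A}$ combined with the $\log^2$ loss from summing $1/(|k_1||k_2|)$ is absorbed by the growth condition precisely under $A>2B$; when $1\leq |k_1 q_m+k_2 q_n|<q_m^{\alpha}$, the $\alpha$-separation hypothesis forces $|k_1|$ (or $|k_2|$) to exceed $m^{12}$, which makes the remaining sum absolutely convergent and of negligible size per pair.

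Adding everything up yields a variance bound
$$\int\!\Big(\sum_{n\leq N}\bigl(f_n - {\textstyle\int} f_n\,d\mu\bigr)\Big)^{\!2}d\mu \ \ll \ \Psi(N)+E(N),$$
together with the analogous bound over tails $M<n\leq N$, which is precisely what a quantitative Borel--Cantelli / Gal--Koksma lemma requires to upgrade to the pointwise asymptotic \eqref{countFSPresultsv} for $\mu$-a.e.\ $x\in F$. The main obstacle I anticipate is the bookkeeping in the non-resonant second moment: tracking how the $\alpha$-separation threshold $q_m^{\alpha}$, the Fourier decay of $\mu$, and the growth condition $\log q_n>Cn^{1/B}$ combine, and in particular pinning down that the hypothesis $A>2B$ (rather than merely $A>B$) is exactly what is needed to reabsorb the two logarithmic losses arising from the $1/|k_1|\cdot 1/|k_2|$ summations over the non-resonant range.
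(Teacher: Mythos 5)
Your high-level strategy coincides with the paper's: quantitative Borel--Cantelli (the paper uses Harman's Lemma~1.5, i.e.\ Lemma~\ref{ebc}), Fourier second moment, a resonant / near-resonant / non-resonant trichotomy for the double sum, and the $\alpha$-separation hypothesis to annihilate the near-resonant contribution. However, there are concrete gaps in the way you propose to execute it.

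The central problem is that you work with the \emph{raw} indicator of $E_{q_n}^\gamma$, whose nonzero Fourier coefficients $c_k$ satisfy only $|c_k|\leq\min\!\big(2\psi(q_n),1/|k|\big)$. These are not absolutely summable, so the double sum $\sum_{k_1,k_2\neq 0}|c_{k_1}c_{k_2}|$ is not finite, and even your single-sum mean estimate is off: with the indicator one gets an error per $n$ of size $\asymp(\log q_n)^{1-A}\ll n^{(1-A)/B}$, not $n^{-A/B}$ as you claim, because the tail $\sum_{|k|>1/\psi}|k|^{-1}(\log|kq_n|)^{-A}$ is governed by $(\log(1/\psi(q_n)))^{1-A}$, not by $(\log q_n)^{-A}$. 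The parenthetical ``standard smoothing if needed'' is therefore not an afterthought; it is the technical core. The paper replaces the indicator by trapezoidal approximations $W^{\pm}_{q_n,\gamma,\varepsilon_n}$ with a tuned shoulder parameter $\varepsilon_n\sim\min(1,\Psi(n)^{-\delta})$, for which $\sum_k|\widehat{W}^{\pm}|\ll\varepsilon_n^{-1/2}$ (Lemma~\ref{main_part_bounded}), at the cost of an additive $\varepsilon_n\psi(q_n)$ error in each mean and a corresponding $\varepsilon_m\psi(q_m)\psi(q_n)$ error in each pairwise term. Tracking $\varepsilon_n$ carefully is what makes the whole bookkeeping go through.

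Two further inaccuracies follow from this. First, your claimed variance bound $\ll\Psi(N)+E(N)$ is stronger than what the paper actually proves; the smoothing averaging $\sum_m\varepsilon_m\psi(q_m)$ produces an unavoidable $\log^+\Psi$ loss (see~\eqref{ub_sum_eps_psipsi_2} and~\eqref{integral_square_main_O}), and $\delta$ cannot be pushed above $1$ because the $\alpha$-separation argument (Lemma~\ref{prop_sum_S_m_n_separation}) requires $\varepsilon_n^{-1}\leq 2n$. The resulting variance is $\ll\Psi(N)\log^+\Psi(N)+E(N)$, which still yields the stated exponent $2+\varepsilon$ after Lemma~\ref{ebc}. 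Second, your attribution of the threshold $A>2B$ to ``two logarithmic losses from $\sum 1/(|k_1||k_2|)$'' misidentifies where it actually bites: in the paper it comes from the convergence of $\sum_{a\leq m<n\leq b}n^{-A/B}\varepsilon_m^{-1/2}\varepsilon_n^{-1/2}$ in~\eqref{sum_const_leq_linear_case1}--\eqref{sum_const_leq_linear_case2}, which requires $\sum_n n^{1-A/B}<\infty$ (together with the preliminary normalisation $\psi(q_n)\geq 3n^{-A/(2B)}$ from Fact~\ref{rem_psi_big}); the losses are polynomial in $n$ through $\varepsilon_n^{-1/2}$, not logarithmic through $1/|k_1||k_2|$.
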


\vspace*{3ex}

We have already mentioned that any increasing sequence $\ca{A} \subseteq \cA_{\cS} $   satisfies the  growth condition  \eqref{hyp_q}  with $B = k$.   Thus, Theorem \ref{main_q} will follow from Theorem \ref{mainSV} on showing that any such sequence is $\alpha$-separated and that the gcd term $E(N)$ appearing in the `error'  term is less than the `main' term $\Psi(N)$.   This will be the subject of \S\ref{deducing} and is very much self-contained.  In short,  the  key ingredient towards  showing $\alpha$-separation is an explicit linear forms in logarithms result by Baker and W\"ustholz,  while the key to dealing with the the gcd term $E(N)$ is to show that the sum
\begin{equation} \label{hot}
\sum_{m=1}^{n-1}
\frac{(q_m,q_n)}{q_n}      \qquad  (n \geq 2)
\end{equation}
is bounded from above by an absolute constant that depends only on the set $\cS$ of primes.

\medskip

\begin{rem} \label{rem19onemore}
The conclusion of Theorem \ref{mainSV} is in line with its Lebesgue measure counterpart \cite[Theorem 3.1]{harman}.  The latter essentially dates back to a paper of LeVeque from 1959.
\end{rem}

 \medskip

\begin{rem} \label{remAsy} Observe that under the hypotheses of  Theorem \ref{mainSV}, if $\Psi(N) \to \infty $ as $N \to \infty$ and
\begin{equation}  \label{needasy}
E(N) \, = \,  O \Big( \Psi(N)^{2 - \epsilon} \Big)
\end{equation}
for some $\epsilon>0$, then $ R(x,N)  $ is asymptotically equal to  $  2\, \Psi(N) $
for $\mu$-almost all $x\in F$.  Clearly, this together with Theorem~\ref{mainCONV}  trivially implies the conclusion of Corollary \ref{main_K} under the significantly  weaker assumptions of Theorem~\ref{mainSV}.
\end{rem}

 \medskip

 \begin{rem}  \label{remhypF}

 As already  mentioned,  we  will deduce Theorem \ref{main_q} from Theorem \ref{mainSV} and in order to do so  we show that  any  sequence $\ca{A} \subseteq \cA_{\cS} $    is $\alpha$-separated.  This is precisely the statement of Proposition~\ref{prop_sum_S_m_n_F} in \S\ref{svalpha} and its proof makes direct use of the fact  that  \eqref{hyp_q} automatically holds for such sequences.   We shall show in \S\ref{rem_Hypothesis_F}, that the proof can be  easily adapted to prove  that if an increasing sequence $\cA =(q_n)_{n\in\N}$  of natural numbers  satisfies:
\begin{itemize}
\item[] \vspace{-0ex} \begin{itemize}
\item[(i)] the growth condition  \eqref{hyp_q} for some constants  $B \ge  1$  and $C > 0$,
\item[(ii)] there exist constants $n_0 \in \N$ and  $D \in \N $, such that  for any $n \geq n_0$
\begin{itemize}
\item[] \begin{itemize}
\item[(a)]  $  \ \# \left\{  \;p \ {\rm prime} :  \,  p | q_n \right\}  \, \leq \, D $
\vspace{0.5ex}
\item[(b)] \  if prime $p| q_n$,  then
  $\displaystyle{ \log p  \, \leq \,   \left(\log q_n\right)^{\frac{1-\epsilon}{2D}} \,  } $ for some $\epsilon>0$,
\end{itemize}
\end{itemize}
\end{itemize}
\end{itemize}
then $\cA$ is $\alpha$-separated.  We say that \emph{$\ca{A}$ satisfies Property D} if the growth condition (i) and condition (ii) on its prime divisors are satisfied. It is clear that if $ \ca{A} \subseteq \cA_{\cS} $ then it automatically satisfies Property~D.   Indeed the  latter significantly broadens the explicit class of sequences for which Theorem \ref{mainSV} applies. The draw back is that for an arbitrary sequence satisfying Property~D it is not possible to control the  gcd term $E(N)$ appearing in the `error'  term of \eqref{countFSPresultsv}.
Indeed, there exist  (see Appendix~\!C of \S\ref{app})   sequences satisfying Property D and associated functions $\psi:\mathbb{N}\rightarrow \I$ such that for any $T>0$
\begin{equation}\label{baden}
E(N) \, \gg \, \Psi(N)^T  \,   \qquad \forall \ \  N \in \N.
\end{equation}
As a consequence, in general we are not even able to show \eqref{needasy} with $\epsilon =0$  let alone with $\epsilon =1$ (that enables us to reduce \eqref{countFSPresultsv} to \eqref{countFSPresult})
as in the situation when $\ca{A} \subseteq \cA_{\cS}$.
Nevertheless, it is relatively straightforward  to construct explicit sequences $\ca{A} \nsubseteq \cA_{\cS}$ (for any $\cS$) that satisfy Property D and for which $E(N)  = O ( \Psi(N))$. We stress that for such sequences,  the counting function $R(x,N)$ satisfies
\eqref{countFSPresult}
for $\mu$-almost all $x\in F$.  We  now show that perturbing a given sequence $\cA_{\cS}=(q_n)_{n\in\N}$  in the following manner has the desired effect.  Let ${\cal P} $ be a infinite set of distinct primes not in $\cS$ such that  $ \sum_{p \in \cP} p^{-1} \leq 1$.  Now choose $p_1 \in \cP$ such that $p_1 > q_1 $ and $p_1$ is larger than any of the $k$ primes in $\cS$.  Then choose $s$ sufficiently large so that
\begin{equation} \label{kl}
\displaystyle{ \log p_1  \, \leq \,   \left(\log q_s\right)^{\frac{1}{3(k+1)}} \,  }  \, ,
\end{equation}
and let $t_1 $ be the unique integer such that
$  q_{t_1}  <    \widetilde{q}_{t_1}:=q_sp_1   <  q_{{t_1}+1} \, $.
Now replace $q_{t_1} \in \cA_{\cS} $ by $ \widetilde{q}_{t_1}  $ and observe  that \eqref{hyp_q} holds for  $\widetilde{q}_{t_1}$  since   it holds for any element of $\cA_{\cS} $ with
$B=k$ and $C= (\log 2)/2 $.   The previous terms $ q_1, \ldots, q_{t_1-1} \in \cA_{\cS}$ remain unchanged. Next, choose $p_2 \in \cP$ such that $p_2 > q_{{t_1}+1} $ and choose $s$ sufficiently large so that
\eqref{kl} holds with $p_1$ replaced by $p_2$. Let $t_2 $ be the unique integer such that
$  q_{t_2} \, <  \,  \widetilde{q}_{t_2} \! :=q_sp_2  \, < \,  q_{{t_2}+1} \, $.
Now replace $q_{t_2} \in \cA_{\cS} $ by $ \widetilde{q}_{t_2}  $ and  keep the  previous terms $ q_{{t_1}+1}, \ldots, q_{{t_2}-1} \in \cA_{\cS}$ unchanged.  On repeating the above procedure indefinitely, we end up with  a sequence $\widetilde{\cA}=(\widetilde{q}_n)_{n\in\N}$ which by construction satisfies Property D with $D=k+1$  and contains arbitrarily large prime divisors. In view of the latter, $\widetilde{\cA} $ cannot be a subsequence of smooth numbers over a finite set of primes.  It now remains to  show that $E(N)  = O ( \Psi(N))$  and as already mentioned  earlier  this follows on showing that \eqref{hot} is bounded above  by an absolute constant $ \widetilde{K}$ for $\widetilde{\cA} $. The latter is not hard to show  assuming it holds  (which it does, see Theorem \ref{thm_gcd} in \S\ref{s_gcd}) for the sequence $\cA_{\cS}$.  To see this, fix $n \geq 2 $ and first assume that  $\widetilde{q}_{n} = q_n \in \cA_{\cS}$; that is, $n \neq t_i $ for some $i \in \N$.  Then, it follows that
\begin{eqnarray}  \label{love}
\sum_{m=1}^{n-1}
\frac{(\widetilde{q}_m,\widetilde{q}_n)}{\widetilde{q}_n}    & \leq &\sum_{m=1}^{n-1}
\frac{(q_m,q_n)}{q_n}   \ + \
\mathop{\mathop{\sum}_{m=1  : }}_{ \widetilde{q}_{m} \notin \cA_{\cS} }^{n-1}
\frac{(\widetilde{q}_{m},q_n)}{q_n}    \ \leq   \  K +  \sum_{p \in \cP} p^{-1}  \ \leq  \   K + 1 \, ,
\end{eqnarray}
where $K$ is  the absolute constant associated with $\cA_{\cS}$ that upper bounds \eqref{hot}.  Here we use the fact that  if $\widetilde{q}_{m} \notin \cA_{\cS}$,  then by definition  $\widetilde{q}_{m} = q_s p $ for some   $ s < m$ and $ p \in \cP $,  and it follows that $(\widetilde{q}_{m},q_n) =  (q_s,q_n) \leq q_s$.  Now suppose that  $\widetilde{q}_{n} \neq q_n \in \cA_{\cS}$;  that is $\widetilde{q}_{n} = q_s p $ for some   $ s < n$ and $ p \in \cP $. Let $p_*$ be the smallest prime amongst those in $\cS$ and let $u $ be the unique integer such that $  p_*^u < p < p_*^{u+1} $.  Then,  it follows that $  q_s p_*^{u} < \widetilde{q}_{n} < q_{n_{\!*}} \!\! :=q_s p_*^{u+1}  \in \cA_{\cS} $  for some $n_{\!*} > n $ and also note that  $(\widetilde{q}_{m},\widetilde{q}_n)   \leq (\widetilde{q}_{m},q_{n_{\!*}})$.  This together with \eqref{love} implies that
\begin{eqnarray*}
\sum_{m=1}^{n-1}
\frac{(\widetilde{q}_m,\widetilde{q}_n)}{\widetilde{q}_n}    & \leq & p_* \sum_{m=1}^{n_{\!*}-1}
\frac{(\widetilde{q}_m,q_{n_{\!*}})}{q_{n_{\!*}}}  \  \leq  \  p_* ( K + 1) \,  . \end{eqnarray*}
The upshot is that \eqref{hot} is  bounded above   by the  absolute constant
$ \widetilde{K}:= p_* ( K + 1)$ for all $n \geq 2 $.
\end{rem}

\medskip

\begin{rem} \label{rem19a}
 In the homogeneous case ($\gamma = 0)$, it is possible to give a direct proof of Corollary~\ref{main_K} that enables us to replace the condition that  $\ca{A} \subseteq \cA_{\cS} $ by the significantly milder condition that $\ca{A}$ satisfies Property D.  This will be the subject of a forthcoming note. In short, the overall strategy is to establish local quasi-independence on average -- see \cite[Equation (2.6)]{durham}  which, in itself, is a consequence of \cite[Propositions~1-3]{mem}. A key ingredient, that is  potentially of independent interest, is to first show that the decay property \eqref{decaylac} holds locally in the following sense.  Let  $\mu$ be a probability  measure supported on a subset $F$ of
$ \ \I \, $ and suppose there is a constant $A > 1 $ so that  \eqref{decaylac} is satisfied. Then for any ball $B \subset \I$ with $\mu(B) > 0 $
$$
\widehat{\mu}_B(t)=  \frac{1}{\mu(B)} \, O\left( (\log |t|)^{-A}     \right)  \, ,
$$
where $\mu_B$ is the normalised restriction of $\mu$ to $B$.
   \end{rem}

 \medskip

 %
%
%
%
%
%
%
%
%

\section{Basic estimates and establishing  Theorem \ref{mainCONV} }

In this section we present various basic estimates that will be required in proving our main results.  Indeed, the estimates provided will be enough to deduce  the general convergence statement  Theorem \ref{mainCONV}.    Given $\gamma\in\I  $, $\psi:\mathbb{N}\rightarrow \I$ and $q \in \N$,  let
\begin{equation} \label{def_En}
E_q^{\gamma}  =  E_q^{\gamma} (\psi) :=   \left\{ x\in \I : \| qx-\gamma \|\leq \psi(q) \right\} \, .
\end{equation}

\vspace*{1ex}

\noindent  By definition, given any increasing sequence of natural numbers $\mathcal{A}=(q_n)_{n=1}^{\infty}$,  we have that
$$
R(x,N)  = \# \big\{ 1\leq n \leq N :   x \in E_{q_n}^{\gamma} \big\} $$
where $ R(x,N) $  is the counting function given by \eqref{countdef}.  Also,
 the set $W_{\ca{A}}(\gamma;\psi)$ defined by~\eqref{wellset} is precisely the set of real numbers in $\I$  which lie in infinitely many of the sets $E_{q_n}^{\gamma}$; that is,
\begin{equation*}
W_{\ca{A}}(\gamma;\psi)\, =\, \limsup_{n\to\infty} E_{q_n}^{\gamma}.
\end{equation*}
Thus the sets $E_{q}^{\gamma}$ with $q \in \N$ can be regarded as being the  `building blocks' of the basic objects studied in this paper.    Let $\mu$ be a probability  measure supported on a subset $F$ of $ \ \I \, $.  We now proceed to estimate the $\mu$-measure of these building blocks.

\subsection{Estimating  $\mu ( E_q^{\gamma})$\label{s_approximating_functions}}

Let $\varepsilon$ and $\delta$ be real numbers such that  $0<\varepsilon\leq 1$ and $0< \delta<1/4$.  Let  $ \chi_{\delta} : \I \to \R $  be the characteristic function defined by
$$ 
\chi_{\delta}(x):= \begin{cases}1 \ \text{ if }  \ \|x\|\leq \delta  \\[1ex]
 0 \ \text{ if } \  \|x\|>\delta \, ,  \end{cases}    $$
and let  $\chi_{\delta, \varepsilon}^+  : \I \to  \R    $ and $  \chi_{\delta, \varepsilon}^-: \I \to \R$ be the continuous upper and lower approximations of  $ \chi_{\delta} $ given by  \vspace{2mm}
$$ 
\chi_{\delta, \varepsilon}^+(x):= \begin{cases} 1 & \text{\  if   }  \  \|x\|\leq \delta, \\[1ex]
1+ \dfrac{1}{\delta\varepsilon}(\delta-\|x\|) &\text{\   if }  \  \delta < \|x\| \leq (1+\varepsilon)\delta \\[1.3ex]
0 & \text{\ if } \  \|x\|>(1+\varepsilon)\delta \, , \end{cases}  $$
and

$$ 
 \chi_{\delta, \varepsilon}^-(x): = \begin{cases} 1 & \text{\  if } \  \|x\|\leq (1 -\varepsilon)\delta  \\[1ex]
\dfrac{1}{\delta\varepsilon}(\delta-\|x\|) &\text{\   if } \  (1-\varepsilon)\delta < \|x\| \leq \delta \\[1.3ex]
0  & \text{\ if } \ \|x\|>\delta  \, . \end{cases}
$$

\noindent Clearly, both  $\chi_{\delta, \varepsilon}^+  $ and $  \chi_{\delta, \varepsilon}^- $ are periodic functions with period 1. Next, given a real positive function $\psi:\mathbb{N}\rightarrow \I$ and any integer $q\geq 4$, consider the functions  $W_{q,\gamma,\varepsilon,\psi}^{+} $ and $W_{q,\gamma,\varepsilon,\psi}^{-}$ defined by \vspace{2mm}
\begin{equation} \label{Wdef}
W_{q,\gamma,\varepsilon}^{+}(x)=W_{q,\gamma,\varepsilon,\psi}^{+}(x)\ :=\ \Big( \sum_{p=0}^{q-1}\delta_{\frac{p+\gamma}{q}} (x)  \Big) *\chi_{\frac{\psi(q)}{q},\varepsilon}^{+}(x)
\end{equation}
and
\begin{equation*} \label{W-def}
W_{q,\gamma,\varepsilon}^{-}(x) =W_{q,\gamma,\varepsilon,\psi}^{-} (x) \  :=  \ \Big(\sum_{p=0}^{q-1}\delta_{\frac{p+\gamma}{q}} (x) \Big) *\chi_{\frac{\psi(q)}{q},\varepsilon}^{-} (x)  \, , \vspace{2mm}
\end{equation*}
where as usual $    *  $   denotes convolution and $\delta_x$ denotes the Dirac delta-function at  the point $x\in\mathbb{R}$.  As alluded to in defining the functions $W_{q,\gamma,\varepsilon,\psi}^{+}$ and $W_{q,\gamma,\varepsilon,\psi}^{-}$, we will often exclude stressing their dependance  on the function $\psi$ since it will often be fixed in any given discussion or argument.  With this in mind,  it is easily verified that  \vspace{2mm}
\begin{eqnarray*}
W_{q,\gamma,\varepsilon}^{+}(x) =  \sum_{p=0}^{q-1}\chi_{\frac{\psi(q)}{q}, \varepsilon}^{+}\left(\textstyle{x- \frac{p+\gamma}{q}}\right)
\end{eqnarray*}
and
\begin{eqnarray*}
W_{q,\gamma,\varepsilon}^{-}(x) = \sum_{p=0}^{q-1}\chi_{\frac{\psi(q)}{q}, \varepsilon}^{-}\left(\textstyle{x- \frac{p+\gamma}{q}}\right) \,  .
\end{eqnarray*}
It thus  follows that for any $0<\varepsilon \le 1$ and any integer $q\geq 4$, \vspace{2mm}
\begin{equation} \label{muineq}
\int_{0}^{1}W_{q,\gamma,\varepsilon}^-(x)\mathrm{d}\mu(x)
\; \leq  \;
\mu( E_q^{\gamma}) \;  \leq \; \int_{0}^{1}W_{q,\gamma,\varepsilon}^+(x)\mathrm{d}\mu(x). \vspace{2mm}
\end{equation}

\noindent
We now proceed to evaluate the above integrals by considering the Fourier series expansions of $ W_{q,\gamma,\varepsilon}^{+}  $ and  $W_{q,\gamma,\varepsilon}^{-} $. When there is no  risk of confusion,   we will simply write $ W_{q,\gamma,\varepsilon}^{\pm}  $  to mean both the `upper' and `lower' functions. Similarly, we will write  $  \chi_{\delta,\ve}^{\pm}$  when we refer to both  $\chi_{\delta,\ve}^{+}$ and $\chi_{\delta,\ve}^{-}$.   With this in mind, for $ k \in \Z$ let $ \widehat{\chi}_{\delta,\ve}^{\pm}(k) $ and $\widehat{W}_{q,\gamma,\varepsilon}^{\pm}(k)$ denote the $k$--th Fourier coefficient of $ {\chi}_{\delta,\ve}^{\pm}(k) $ and $ {W}_{q,\gamma,\varepsilon}^{\pm}(k),$
respectively.  A straightforward calculation yields that   \vspace{2mm}
\begin{equation}  \label{slv1}
\widehat{\chi}_{\delta,\ve}^{+}(k)\, = \,
\begin{cases}
(2 + \ve)\delta  & \text{\  if   }  \  k=0 \\[2ex]
\dfrac{\cos(2\pi k \delta) - \cos(2\pi k\delta(1+\ve))}{2\pi^2k^2\delta\ve }  \, & \text{\  if   }  \   k\neq 0  \, ,
\end{cases} \,
\end{equation}

\noindent and

\begin{equation} \label{slv1a}
\widehat{\chi}_{\delta,\ve}^{-}(k)\, = \,
\begin{cases}
(2 - \ve)\delta  & \text{\  if   }  \  k=0  \\[2ex]
\dfrac{\cos(2\pi k\delta(1 -\ve))-\cos(2\pi k \delta) }{2\pi^2k^2\delta\ve }  \, & \text{\  if   }  \  k\neq 0  \, . \vspace{1.8mm}
\end{cases}  \,
\end{equation}

\noindent Since  the functions $ W_{q,\gamma,\varepsilon}^{\pm}  $  are defined via convolution, we have that
\vspace{2mm}
$$\widehat{W}_{q,\gamma,\varepsilon}^{\pm}(k) = \sum\limits_{p=0}^{q-1}\widehat{\delta}_{\frac{p+\gamma}{q}}(k)  \, . \, \widehat{\chi}_{\frac{\psi(q)}{q}, \ve}^{\pm}(k) \, .$$
Trivially,
$$
\widehat{\delta}_{\frac{p+\gamma}{q}}(k)   =  \exp\left( -\dfrac{2\pi  i k (p+\gamma)}{ q}\right)  \, .
$$

\noindent Thus, it follows from \eqref{slv1} that  for $k\neq 0$,
\vspace*{2mm}
\begin{equation} \label{fcoef}
\widehat{W}_{q,\gamma,\varepsilon}^+(k)\, =\,\begin{cases}
\exp\left( -\dfrac{2\pi  i k \gamma}{ q}\right) \dfrac{q\left(\cos(2\pi k\psi(q)q^{-1})-\cos(2\pi k\psi(q)q^{-1}(1+\varepsilon))\right)}{2\pi^2 k^2\psi(q)q^{-1}\varepsilon}   &\text{\  if   }  \ q\mid k \\[3ex]
0 & \text{\  if   }  \  q\nmid k \,  ,
\end{cases}  \,
 \end{equation}
and for $k = 0$,  \vspace*{2mm}
\begin{equation} \label{fcoef_zero}
\widehat{W}_{q,\gamma,\varepsilon}^{+}(0)=(2+\varepsilon) \,  \psi(q)\, .  \vspace{2mm}
\end{equation}

\noindent Similarly, it follows from \eqref{slv1a} that  for $k\neq 0$,
\vspace{2mm}
\begin{equation} \label{fcoef2}
\widehat{W}_{q,\gamma,\varepsilon}^-(k)\, =\,
\begin{cases}
\exp\left( -\dfrac{2\pi  i k \gamma}{ q}\right) \dfrac{q\left(\cos(2\pi k\psi(q)q^{-1}(1-\varepsilon))-\cos(2\pi k\psi(q)q^{-1}) \right)}{2\pi^2 k^2\psi(q)q^{-1}\varepsilon}  &\text{\  if   }  \ q\mid k \\[3ex]
0 & \text{\  if   }  \  q\nmid k  \, ,   \vspace{2mm}
\end{cases}
\end{equation}
and for $k=0$,
\vspace{2mm}
\begin{equation} \label{fcoef_zero2}
\widehat{W}_{q,\gamma,\varepsilon}^-(0)  = (2 - \ve)\, \psi(q)  \, . \vspace{2mm}
\end{equation}

\noindent It is easily seen that $\sum\limits_{k \in \Z} \big| \widehat{W}_{q,\gamma,\varepsilon}^{\pm}(k) \big|  < \infty $, so the Fourier series
$$ \sum_{k \in \Z }\widehat{W}_{q,\gamma,\ve}^{\pm}(k)  \exp(2\pi kix)
$$
converges uniformly to $W_{q,\gamma,\varepsilon}^{\pm}(x)$ for all $ x \in \I$.  Hence, it  follows that
\begin{eqnarray*}
\int_0^1 W_{q,\gamma,\varepsilon}^{\pm}(x)  \; \mathrm{d}\mu(x) \; =  \;  \sum_{k\in \Z} \, \widehat{W}_{q,\gamma,\varepsilon}^{\pm}(k)  \;  \widehat{\mu}(-k)   \, .
\end{eqnarray*}
This together with \eqref{muineq}, \eqref{fcoef_zero},  \eqref{fcoef_zero2} and the fact that $\widehat{\mu}(0)=1$, implies that
\vspace*{3ex}
\begin{equation} \label{mu_ie}
\begin{array}{ll}
   \mu( E_q^{\gamma})  \  \leq \ (2+\varepsilon) \, \psi(q)  \  + \displaystyle{\sum_{k \in \Z \setminus \{0 \} }}\widehat{W}_{q,\gamma,\varepsilon}^{+}(k)  \; \widehat{\mu}(-k)\;
   \\[6ex]
   \mu( E_q^{\gamma})  \ \geq  \  (2-\varepsilon) \,  \psi(q)   \ + \displaystyle{\sum_{k \in \Z \setminus \{0 \} }}  \widehat{W}_{q,\gamma,\varepsilon}^{-}(k) \; \widehat{\mu}(-k)  \, .
    \end{array} \,
\end{equation}

\hidden{
EZ: WE NEED TO DECIDE ON THE NOTATION IN THIS SECTION. IT MIGHT BE GOOD TO SWITCH TO THE SEQUENCE $(q_n)$ AT THIS POINT, TO REPLACE GENERAL DENOMINATORS $q$. THE THING IS THAT IF WE GO WITH GENERAL $q$ HERE, THEN LATER IN THE SECTION WE NEED TO (RE-)INTRODUCE THE NOTATION $E_q^{\gamma}$ (I KEEP IT WITH EXTRA SYMBOL. $(\psi)$ AT THE MOMENT). THEN, WE GET A CONFLICT NOTATION BECAUSE THE PREVIOUSLY DEFINED $E_n^{\gamma}$ IS THE SAME AS THE NEWLY DEFINED $E_{q_n}^{\gamma}$, AND OF COURSE $q_n\ne n$.
}

\vspace*{2ex}

We now proceed to estimate the Fourier coefficient  $ \widehat{W}_{q,\gamma,\varepsilon}^{\pm}(k) $ when $k \neq 0$. In view of \eqref{fcoef}  and \eqref{fcoef2}, we only need to consider the case when $k$ is a multiple of $q$.   With this in mind,  for any $ s\in\mathbb{Z}   \setminus\{0\} $ we claim that
\begin{eqnarray}
\left|\widehat{W}_{q,\gamma,\varepsilon}^{\pm}(s q)\right| &\leq& (2+\varepsilon) \, \psi(q),\label{W_ub_psi}\\[2ex]
\left|\widehat{W}_{q,\gamma,\varepsilon}^{\pm}(s q)\right| &\leq& \frac{1}{\pi^2s^2\psi(q)\varepsilon}.\label{W_ub_1s2}
\end{eqnarray}

\noindent The upper bound~\eqref{W_ub_1s2} follows from~\eqref{fcoef} and~\eqref{fcoef2} by using the trivial fact that $|\cos(x)|\leq 1$ for all $x\in\R$.  Note that \eqref{W_ub_1s2} is stronger than \eqref{W_ub_psi} for large values of $|s|$; namely when
$$
s^2   \ge  \frac{1}{\pi^2\psi(q)^2    \varepsilon  ( 2+ \varepsilon)}  \, . $$
In order to establish the upper bound~\eqref{W_ub_psi}, note that
\begin{eqnarray*}
| \cos\big(2\pi k\psi(q)q^{-1}\big)-\cos\big(2\pi k \psi(q)q^{-1}(1 + \varepsilon)\big)| & =& \left|\int_{\frac{2\pi k\psi(q)}{q}}^{\frac{2\pi k\psi(q)}{q}(1 + \varepsilon)}\!\!\sin(x) \;  {\rm d}x \right|\\[1ex]
& \leq & \int_{\frac{2\pi k\psi(q)}{q}}^{\frac{2\pi k\psi(q)}{q}(1 + \varepsilon)}  |x| \;  {\rm d}x \\[1ex]
&=& 2\pi^2 k^2 \psi(q)^2q^{-2} \, \varepsilon (2+\varepsilon)
\end{eqnarray*}
This together with~\eqref{fcoef} yields~\eqref{W_ub_psi} for the upper function $ W_{q,\gamma,\varepsilon}^{+}  $.  The proof for the lower function follows the same steps, with appropriate modifications such as using~\eqref{fcoef2} instead of~\eqref{fcoef}.

\noindent The above estimates enable us to prove the following two useful lemmas.

\begin{lem} \label{main_part_bounded}

Let $0< \varepsilon,\tilde{\varepsilon} \le 1$.  Then,  for any integers $q, r \geq 4$

\begin{eqnarray} \label{ub_single_sum}
\sum_{s \in \Z} \big|\what{W}_{q,\gamma,\varepsilon}^{\pm}(sq)\big| \  <  \ \frac{3}{\varepsilon^{1/2}}
\end{eqnarray}
and
\begin{equation} \label{ub_double_sum}
\sum_{s\in \Z} \,  \sum_{t\in \Z}  \, \big|\what{W}_{q,\gamma,\varepsilon}^{\pm}(sq)\big| \ \big|\what{W}_{r,\gamma,\tilde{\varepsilon}}^{\pm}(tr)\big|  \ \leq \  \frac{9}{\varepsilon^{1/2}\cdot\tilde{\varepsilon}^{1/2}}  \, .
\end{equation}
\end{lem}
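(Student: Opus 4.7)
The second inequality \eqref{ub_double_sum} is an immediate consequence of the first: since the summand factorizes as a product of a function of $s$ only and a function of $t$ only, the double sum equals
\[
\Big(\textstyle\sum_{s\in\Z} |\what{W}_{q,\gamma,\varepsilon}^{\pm}(sq)|\Big) \cdot \Big(\textstyle\sum_{t\in\Z} |\what{W}_{r,\gamma,\tilde{\varepsilon}}^{\pm}(tr)|\Big),
\]
and applying \eqref{ub_single_sum} to each factor yields the bound $9/(\varepsilon^{1/2}\tilde{\varepsilon}^{1/2})$. So the entire content of the lemma is \eqref{ub_single_sum}.

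For \eqref{ub_single_sum}, the plan is to combine the two a priori estimates \eqref{W_ub_psi} and \eqref{W_ub_1s2} by splitting the sum at the threshold where the two bounds coincide. The uniform bound $|\what{W}_{q,\gamma,\varepsilon}^{\pm}(sq)|\le (2+\varepsilon)\psi(q)$ is wasteful for large $|s|$, while the decay bound $|\what{W}_{q,\gamma,\varepsilon}^{\pm}(sq)|\le 1/(\pi^2 s^2 \psi(q)\varepsilon)$ is wasteful for small $|s|$; they balance at
\[
s_0 \;:=\; \frac{1}{\pi\psi(q)\sqrt{(2+\varepsilon)\varepsilon}}\,.
\]
The natural decomposition is therefore
\[
\sum_{s\in\Z}\big|\what{W}_{q,\gamma,\varepsilon}^{\pm}(sq)\big|
\;\le\; \sum_{|s|\le s_0}(2+\varepsilon)\psi(q)\;+\;\sum_{|s|>s_0}\frac{1}{\pi^2 s^2\psi(q)\varepsilon}.
\]

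I would then estimate each piece separately. The first sum contains at most $2s_0+1$ terms, contributing
$(2s_0+1)(2+\varepsilon)\psi(q) = O\!\left(\varepsilon^{-1/2}\right)$ (the leading $s_0\cdot\psi(q)$ is of order $\varepsilon^{-1/2}$ by construction of $s_0$, and the constant term is absorbed using $\psi(q)\le 1$ and $\varepsilon\le 1$). For the tail, I compare with an integral:
\[
\sum_{|s|>s_0}\frac{1}{\pi^2 s^2\psi(q)\varepsilon}\;\le\;\frac{2}{\pi^2\psi(q)\varepsilon}\int_{s_0}^\infty\frac{dx}{x^2}\;=\;\frac{2}{\pi^2\psi(q)\varepsilon s_0}\;=\;O\!\left(\varepsilon^{-1/2}\right),
\]
again by the choice of $s_0$. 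Summing the two contributions and tracking the explicit constants (using $\varepsilon\le 1$ to absorb the $(2+\varepsilon)\psi(q)$ leftover into a $\varepsilon^{-1/2}$ term) will give the claimed bound $3/\varepsilon^{1/2}$.

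The strategy is entirely standard: a balanced split of a sum between a uniform bound and a $1/s^2$ decay bound. The only nontrivial element is the bookkeeping required to land on the specific numerical constant $3$ rather than a larger constant; this is achieved by exploiting $\psi(q)\le 1$ and $\varepsilon\le 1$ at the end to collapse the two contributions into a single $\varepsilon^{-1/2}$ term. No obstacle of substance is expected, which is why the authors relegate this to an auxiliary lemma for later use in Sections dealing with variance estimates for the counting function $R(x,N)$.
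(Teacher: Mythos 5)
Your proposal follows essentially the same route as the paper: reduce \eqref{ub_double_sum} to \eqref{ub_single_sum} by factorizing the double sum, then split the single sum at the scale where the uniform bound \eqref{W_ub_psi} and the $1/s^2$ decay bound \eqref{W_ub_1s2} cross over, estimating the tail by an integral. The only cosmetic difference is that the paper replaces the $(2+\varepsilon)\psi(q)$ bound by the cleaner $3\psi(q)$ before splitting, and takes the slightly larger threshold $\frac{1}{\sqrt{2}\pi\psi(q)\varepsilon^{1/2}}$ (rather than the exact balance point $s_0$), which makes the two resulting contributions come out explicitly as $\frac{3\sqrt 2}{\pi\varepsilon^{1/2}}$ and $\frac{2\sqrt 2}{\pi\varepsilon^{1/2}}$, summing to $\frac{5\sqrt 2}{\pi}\,\varepsilon^{-1/2}\approx 2.25\,\varepsilon^{-1/2}<3\,\varepsilon^{-1/2}$; this is the explicit bookkeeping you defer. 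In short, the decomposition, the two ingredient bounds, and the $\varepsilon^{-1/2}$ scaling at the threshold are exactly as in the paper, so the approach is the same.
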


\vspace*{2ex}

\begin{proof}  For any $N \in \N $, we trivially have that
$$
\sum_{s=-N}^N \big|\what{W}_{q,\gamma,\varepsilon}^{\pm}(sq)\big| \ \leq \hspace*{3ex}  2 \!\!\!\!\!\!\!\!\!\!  \sum_{ 0 \le s\le \frac{1}{\sqrt{2}\pi\psi(q)\varepsilon^{1/2}}}    \!\!\!\!\!\!\! \big|\what{W}_{q,\gamma,\varepsilon}^{\pm}(sq)\big| \ \ + \
\hspace*{3ex}  2 \!\!\!\!\!\!\!\!\!\!  \sum_{\frac{1}{\sqrt{2}\pi\psi(q)\varepsilon^{1/2}}<s\leq N}     \!\!\!\!\!\!\!
\big|\what{W}_{q,\gamma,\varepsilon}^{\pm}(sq)\big|
$$
On using~\eqref{fcoef_zero} and~\eqref{W_ub_psi} to estimate the first sum appearing on the right hand side of the above inequality and \eqref{W_ub_1s2}
for the second, it follows that
\begin{equation*} \label{ub_single_sum_N}
\begin{aligned}
\sum_{s=-N}^N \big|\what{W}_{q,\gamma,\varepsilon}^{\pm}(sq)\big| & \leq    \hspace*{3ex}  2 \!\!\!\!\!\!\!\!\!\!  \sum_{ 0 \le s\le \frac{1}{\sqrt{2}\pi\psi(q)\varepsilon^{1/2}}}   \!\!\!\! 3 \psi(q)  \ \ + \
\hspace*{3ex}  2 \!\!\!\!\!\!\!\!\!\!  \sum_{\frac{1}{\sqrt{2}\pi\psi(q)\varepsilon^{1/2}}<s\leq N}
\frac{1}{\pi^2 s^2\psi(q)\varepsilon}
\\[3ex]
&\leq  \  \ \frac{3 \sqrt{2}}{\pi\varepsilon^{1/2}}  \ + \ \frac{ 2 \sqrt{2}}{\pi\varepsilon^{1/2}} \ <  \  \frac{3}{\varepsilon^{1/2}}   \, .
\end{aligned}
\end{equation*}
The desired inequality \eqref{ub_single_sum} now follows on letting $N\to \infty$. The other inequality \eqref{ub_double_sum}, trivially follows from  \eqref{ub_single_sum} and the fact that
$$
\sum_{s\in \Z} \,  \sum_{t\in \Z}  \, \big|\what{W}_{q,\gamma,\varepsilon}^{\pm}(sq)\big| \ \big|\what{W}_{r,\gamma,\tilde{\varepsilon}}^{\pm}(tr)\big|
\   \leq   \
\Big(\sum_{s \in \Z} \big|\what{W}_{q,\gamma,\varepsilon}^{\pm}(sq)\big|\Big) \ \Big(\sum_{t  \in \Z} \big|\what{W}_{r,\gamma,\tilde{\varepsilon}}^{\pm}(tr)\big|\Big) \, .
$$
\end{proof}

\begin{lem} \label{lem2}
Let
 $\mu$ be a probability measure supported on a subset $F$ of  $\I$. Let  $E_{q}^{\gamma}$ be given by~\eqref{def_En}. Then, for any integer $ q \ge 4$
\begin{eqnarray}
\mu( E_{q}^{\gamma}) &\leq&  3\psi(q) + 3\max_{s\in\N}|\widehat{\mu}(sq)| \label{lem2_result1}\\[2ex]
\mu( E_{q}^{\gamma}) &\leq&  3\psi(q) + 2\sum_{s=1}^{\infty}\frac{|\widehat{\mu}(sq)|}{s}  \; .  \label{lem2_result2}
\end{eqnarray}
\end{lem}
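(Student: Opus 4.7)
Both bounds come from the upper inequality in \eqref{mu_ie}. Since $\widehat{W}^{+}_{q,\gamma,\varepsilon}(k)$ vanishes unless $q \mid k$ (by \eqref{fcoef}), and since $|\widehat{\mu}(-sq)| = |\widehat{\mu}(sq)|$ (as $\mu$ is real-valued) while $|\widehat{W}^{+}_{q,\gamma,\varepsilon}(-sq)| = |\widehat{W}^{+}_{q,\gamma,\varepsilon}(sq)|$ (the $\gamma$-dependent exponential in \eqref{fcoef} has modulus one), pairing the terms $\pm sq$ in \eqref{mu_ie} and applying the triangle inequality gives
$$\mu(E_q^{\gamma}) \,\leq\, (2+\varepsilon)\psi(q) \,+\, 2\sum_{s=1}^{\infty} \big|\widehat{W}^{+}_{q,\gamma,\varepsilon}(sq)\big|\,|\widehat{\mu}(sq)|.$$
In both cases I would specialise to $\varepsilon = 1$, which makes the leading term $3\psi(q)$.

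For \eqref{lem2_result1}, I would pull $\max_{s \in \N}|\widehat{\mu}(sq)|$ out of the sum and bound the remaining Fourier coefficients using \eqref{ub_single_sum} of Lemma \ref{main_part_bounded} with $\varepsilon = 1$: since
$$2\sum_{s=1}^{\infty} \big|\widehat{W}^{+}_{q,\gamma,1}(sq)\big| \,\leq\, \sum_{s \in \Z} \big|\widehat{W}^{+}_{q,\gamma,1}(sq)\big| \,<\, 3,$$
the bound \eqref{lem2_result1} is immediate.

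The more substantive step is \eqref{lem2_result2}, which requires refining the $s$-decay of the Fourier coefficients to
$$\big|\widehat{W}^{+}_{q,\gamma,\varepsilon}(sq)\big| \,\leq\, \frac{1}{\pi s}\qquad (s \geq 1).$$
To establish this, I would start from \eqref{fcoef} at $k = sq$ and apply the sum-to-product identity
$$\cos A - \cos B \,=\, 2\sin\!\bigl(\tfrac{A+B}{2}\bigr)\sin\!\bigl(\tfrac{B-A}{2}\bigr)$$
with $A = 2\pi s\psi(q)$ and $B = 2\pi s\psi(q)(1+\varepsilon)$. Estimating $|\sin(\pi s\psi(q)\varepsilon)| \leq \pi s\psi(q)\varepsilon$ and the remaining sine trivially by $1$ causes the $\psi(q)$ and $\varepsilon$ factors to cancel against the denominator of \eqref{fcoef}, leaving exactly $1/(\pi s)$. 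Substituting this into the displayed inequality and using $2/\pi < 2$ yields
$$\mu(E_q^{\gamma}) \,\leq\, 3\psi(q) + \frac{2}{\pi}\sum_{s=1}^{\infty}\frac{|\widehat{\mu}(sq)|}{s} \,\leq\, 3\psi(q) + 2\sum_{s=1}^{\infty}\frac{|\widehat{\mu}(sq)|}{s},$$
which is \eqref{lem2_result2}. The trigonometric refinement is the only non-routine point; it sharpens \eqref{W_ub_1s2} precisely in the regime of small $\psi(q)\varepsilon$ and is what delivers the $1/s$ weight in place of $1/s^2$.
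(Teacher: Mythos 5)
Your proof is correct and follows the same overall strategy as the paper: start from the upper bound in \eqref{mu_ie} with $\varepsilon=1$, use \eqref{fcoef} to reduce the off--diagonal sum to multiples of $q$, and then bound the Fourier coefficients $\widehat{W}^+_{q,\gamma,1}(sq)$. Your proof of \eqref{lem2_result1} is the paper's proof verbatim (pull out $\max_s|\widehat{\mu}(sq)|$ and apply \eqref{ub_single_sum}). The only difference is in how you obtain the $1/s$--decay needed for \eqref{lem2_result2}. The paper takes the geometric mean of the two previously established bounds \eqref{W_ub_psi} and \eqref{W_ub_1s2}, which gives $|\widehat{W}^\pm_{q,\gamma,\varepsilon}(sq)|\le \sqrt{(2+\varepsilon)/\varepsilon}\,/(\pi|s|)\le 1/|s|$; you instead go back to \eqref{fcoef} directly and apply the product formula $\cos A-\cos B = 2\sin\bigl(\tfrac{A+B}{2}\bigr)\sin\bigl(\tfrac{B-A}{2}\bigr)$, bounding one sine trivially and the other linearly. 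This is a clean and self-contained alternative (in fact it is equivalent to noting that $\cos$ is $1$-Lipschitz, so $|\cos A-\cos B|\le|B-A|=2\pi s\psi(q)\varepsilon$), and it yields the slightly sharper constant $1/(\pi s)$, which the paper then discards when it rounds $\sqrt{3}/\pi$ up to $1$. Since the final constant in \eqref{lem2_result2} is $2$ either way, nothing downstream changes; the variation is purely in a single intermediate estimate.

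One small remark: the paper's displayed geometric-mean inequality is written for $\widehat{W}^{\pm}_{q,\gamma,1/2}$ but quotes the numerator $\sqrt{3}$, which corresponds to $\varepsilon=1$; this is evidently a typo, and your choice of $\varepsilon=1$ throughout (so that the leading term is exactly $3\psi(q)$ and \eqref{lem2_ie2} applies directly) is the internally consistent reading.
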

\begin{proof}
It follows from  \eqref{mu_ie} with $\varepsilon= 1$ and \eqref{fcoef},  that
\begin{equation} \label{lem2_ie2}
\mu( E_{q}^{\gamma})   \ \le  \  3\psi(q)     + \sum_{s \in \Z \setminus \{0 \}  }\widehat{W}_{q,\gamma,1}^{+}(sq)   \  \widehat{\mu}(-sq).
\end{equation}
The desired estimate~\eqref{lem2_result1} now immediately follows from this, together with the fact that by  Lemma~\ref{main_part_bounded},
\begin{equation*} \label{lem2_ie3}
\Big|\sum_{s \in \Z \setminus \{0 \}  }\widehat{W}_{q,\gamma,1}^{+}(sq)   \  \widehat{\mu}(-sq)\Big|   \  \stackrel{\eqref{ub_single_sum}}{\leq}  \  3 \, \max_{s\in\N}|\widehat{\mu}(sq)|  \, .
\end{equation*}

\noindent To prove~\eqref{lem2_result2}, note that the geometric mean of the inequalities~\eqref{W_ub_psi} and~\eqref{W_ub_1s2} implies that
$$
\big|\widehat{W}_{q,\gamma,1/2}^{\pm}(s q)\big|  \ \leq \  \frac{\sqrt{3}}{\pi |s|}    \ \leq \   \frac{1}{|s|} \, .
$$
This together with~\eqref{lem2_ie2} implies~\eqref{lem2_result2} .
\end{proof}

\subsection{Proof of Theorem \ref{mainCONV}}

Armed with  Lemma~\ref{lem2},  it is easily seen that Theorem~\ref{mainCONV} is a straightforward  consequence of the classical Borel-Cantelli Lemma \cite[Chapter 1]{harman} from probability theory.

\vspace{2mm}
\begin{lem}[Borel-Cantelli]\label{lem_BC} Let $(X,\ca{B},\mu)$ be a probabilty space and $(A_n)_{n=1}^{\infty}\subseteq\ca{B}$ be a sequence of subsets of~$X$. If
\begin{equation*} \label{lem_BC_convcondition}
\sum\limits_{n=1}^{\infty}\mu(A_n)<\infty
\end{equation*}
then
$$ \mu\left( \limsup_{n\to\infty}A_n\right)=0 . $$
\end{lem}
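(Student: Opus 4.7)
The plan is to give the standard measure-theoretic proof of Borel--Cantelli, which proceeds in three short steps: rewrite the $\limsup$, apply countable subadditivity, and then exploit that a convergent series has vanishing tails.

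First I would unpack the definition of $\limsup$. Recall that
$$ \limsup_{n\to\infty} A_n \; = \; \bigcap_{N=1}^{\infty}\, \bigcup_{n=N}^{\infty} A_n . $$
Setting $B_N := \bigcup_{n=N}^{\infty} A_n$, the sets $B_N$ are decreasing in $N$, and $\limsup A_n = \bigcap_{N\ge 1} B_N$. Since $\mu$ is a probability measure (in particular finite), continuity of measure from above applies and gives
$$ \mu\!\left(\limsup_{n\to\infty} A_n\right) \; = \; \lim_{N\to\infty} \mu(B_N) . $$

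Next, by countable subadditivity of $\mu$, for every $N \ge 1$ we have
$$ \mu(B_N) \; \leq \; \sum_{n=N}^{\infty} \mu(A_n). $$
Here the assumption enters: since $\sum_{n=1}^{\infty} \mu(A_n) < \infty$, the tail sums $\sum_{n=N}^{\infty}\mu(A_n)$ tend to $0$ as $N \to \infty$. Combining this with the previous display yields $\lim_{N\to\infty}\mu(B_N)=0$, whence $\mu(\limsup A_n)=0$, as required.

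There is really no main obstacle: each of the three ingredients (the $\limsup$ identity, continuity of measure from above, and countable subadditivity) is a basic property of any probability measure, and the hypothesis is used only to make the tails of a convergent series vanish. The only point where one must be slightly careful is the use of continuity from above, which requires finiteness of $\mu$ (or at least of some $B_N$); this is automatic here since $\mu$ is a probability measure. Consequently the proof is a short, routine verification rather than a substantial argument.
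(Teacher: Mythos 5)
Your argument is correct and is the standard textbook proof of the first Borel--Cantelli lemma. The paper itself does not prove this statement; it is quoted as a classical fact with a citation to Harman's book, so there is no internal proof to compare against. One very small remark: continuity of measure from above is not strictly needed. Since $\limsup_n A_n \subseteq B_N$ for every $N$, monotonicity alone gives $\mu(\limsup_n A_n) \le \mu(B_N) \le \sum_{n\ge N}\mu(A_n)$, and letting $N\to\infty$ already forces $\mu(\limsup_n A_n)=0$. This avoids the (automatic here, but extra) finiteness hypothesis and shortens the argument slightly; otherwise your proof is exactly right.
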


\vspace*{3ex}

To establish  Theorem~\ref{mainCONV}, we first  observe  that Lemma~\ref{lem2} together with the hypotheses of the theorem guarantees that $$
\sum\limits_{n=1}^{\infty}\mu(E_{q_n}^{\gamma})<\infty  \, .
$$
Hence, the Borel-Cantelli Lemma with  $A_n:=E_{q_n}^{\gamma}$ implies that  $$  \mu\big( \limsup_{n\to\infty}E_{q_n}^{\gamma}\big)=0 \, .   $$
This completes the proof since by definition   $
W_{\ca{A}}(\gamma;\psi)\, =\, \limsup\limits_{n\to\infty} E_{q_n}^{\gamma}  $.


We now move onto proving the counting results: Theorem~\ref{asmplacthm} and Theorem~\ref{mainSV}.  As already mentioned, Theorem \ref{main_q}  is deduced from Theorem \ref{mainSV}  in  \S\ref{deducing}.

\section{Establishing the counting results modulo `independence'}


\medskip

To begin with, we observe  that Theorem~\ref{mainCONV} (which we have already proved) implies both Theorems~\ref{asmplacthm} $\&$ \ref{mainSV} if  $\Psi (N)  $ is bounded;  that is,  if
$$
 \textstyle{\sum_{n=1}^{\infty}}  \psi(q_n)  < \infty \, . $$
Indeed, it is easily verified that the  hypotheses on the measure $\mu$ and the sequence $\cA$ within the statements of Theorems~\ref{asmplacthm} $\&$~\ref{mainSV}  guarantees the convergence condition \eqref{cond2_2},
and so Theorem~\ref{mainCONV} implies that for $\mu$-almost all $x \in F$
$$
R(x,N) =  R(x,N;\gamma,\psi, \ca{A})   <  \infty   \ \ {\rm as \  \ }  N \to \infty \, .
$$
This is consistent with the conclusions of the counting theorems; namely the statements  associated with \eqref{countlacresult} and \eqref{countFSPresultsv}.  Thus, during the course of proving Theorems~\ref{asmplacthm} $\&$ \ref{mainSV}  we can assume that $\Psi (N)  $ is unbounded.

 \medskip

\begin{fact}  \label{fact1}  Without loss of generality, we can assume that
$$
\textstyle{\sum_{n=1}^{\infty}}  \psi(q_n)  = \infty   \quad { \ or  \ equivalently   \ } \quad  \Psi(N) \to \infty \ \ {\rm as \  \ }  N \to \infty \, .    $$

\end{fact}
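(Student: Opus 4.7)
The plan is to reduce Theorems~\ref{asmplacthm} and~\ref{mainSV} to the divergent regime by showing that when $\Psi(N)$ stays bounded, the asymptotic formulas \eqref{countlacresult} and \eqref{countFSPresultsv} hold trivially as an immediate corollary of the already-established convergence result, Theorem~\ref{mainCONV}. Concretely, the goal is two-fold: (a) verify that the hypotheses of each counting theorem imply the convergence condition \eqref{cond2_2} needed to invoke Theorem~\ref{mainCONV}; and (b) check that the conclusion of Theorem~\ref{mainCONV} is consistent with, and in fact stronger than, what the counting theorems claim in the bounded regime.

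For step (a), I would bound $\max_{k\neq 0}|\widehat{\mu}(kq_n)|$ under each set of hypotheses. Since $|kq_n|\ge q_n$ whenever $k\neq 0$, the decay hypothesis \eqref{decaylac} yields the uniform bound $\max_{k\neq 0}|\widehat{\mu}(kq_n)| \ll (\log q_n)^{-A}$. Under the lacunarity assumption of Theorem~\ref{asmplacthm}, one has $\log q_n \gg n$, giving $\max_{k\neq 0}|\widehat{\mu}(kq_n)| \ll n^{-A}$, which is summable since $A>2>1$. Under the hypotheses of Theorem~\ref{mainSV}, the growth condition \eqref{hyp_q} yields $\log q_n \gg n^{1/B}$, giving $\max_{k\neq 0}|\widehat{\mu}(kq_n)| \ll n^{-A/B}$, which is summable since $A>2B>B$. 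In both situations, condition \eqref{cond2_2} is satisfied.

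For step (b), assume $\sum_n \psi(q_n) < \infty$. Theorem~\ref{mainCONV} then yields $\mu(W_{\ca{A}}(\gamma;\psi)) = 0$. Because $W_{\ca{A}}(\gamma;\psi) = \limsup_n E_{q_n}^{\gamma}$, for $\mu$-almost every $x \in F$ the set of indices $\{n : x \in E_{q_n}^{\gamma}\}$ is finite, so $R(x,N)$ is bounded in $N$. Since $2\Psi(N)$ is also bounded, the discrepancy $|R(x,N) - 2\Psi(N)|$ is uniformly $O_x(1)$. This is absorbed by the error terms in \eqref{countlacresult} and \eqref{countFSPresultsv}, which are bounded below by a positive constant whenever $\Psi(N)$ is bounded away from zero. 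The only degenerate case is $\Psi(N)\equiv 0$, but then $\psi(q_n)=0$ for all $n$, so $\mu(E_{q_n}^{\gamma})=0$ (by non-atomicity of $\mu$), giving $R(x,N)=0=2\Psi(N)$ for $\mu$-almost all $x$ and the asymptotic identity is exact.

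I do not anticipate any genuine difficulty here: the substantive content of Theorems~\ref{asmplacthm} and~\ref{mainSV} lies entirely in the unbounded regime, and the preceding two steps merely dispose of the trivial tail. Having completed this reduction, the remainder of the proofs may proceed under the standing assumption $\Psi(N) \to \infty$, which will be essential for the forthcoming quasi-independence and Borel--Cantelli arguments.
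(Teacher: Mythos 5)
Your proposal is correct and follows essentially the same route as the paper: verify that the hypotheses of Theorems~\ref{asmplacthm} and~\ref{mainSV} guarantee condition~\eqref{cond2_2}, invoke Theorem~\ref{mainCONV} to conclude that $R(x,N)$ stays bounded for $\mu$-a.e.\ $x$ when $\Psi(N)$ is bounded, and observe that the resulting $O(1)$ discrepancy is subsumed by the error terms since $\Psi(N)\geq\psi(q_1)>0$. You fill in the explicit computations that the paper waves through with ``it is easily verified,'' and your side remark on the degenerate case $\Psi(N)\equiv 0$ is harmless but moot since $\psi$ is assumed strictly positive.
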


\medskip

Before describing the main mechanism for establishing the desired counting results,  we mention three more useful facts that we can assume during the course of their proofs.


\medskip

\begin{fact} \label{rem_psi_big}   Without loss of generality, we can assume that for any given   $\tau > 1$
\begin{equation}
\label{psi_is_not_small}
\psi(q_n)\geq 3 \, n^{-\tau}      \quad \forall   \  \ n \in \N \, . \end{equation}
\end{fact}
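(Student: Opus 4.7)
The plan is to define a modified function $\tilde{\psi}$ that by construction satisfies the lower bound \eqref{psi_is_not_small}, and to show that the counting theorem for $\tilde{\psi}$ is equivalent to the counting theorem for $\psi$. Specifically, I would set $\tilde{\psi}(q_n) := \max\{\psi(q_n),\, 3n^{-\tau}\}$, so that $\tilde{\psi}(q_n)\geq 3n^{-\tau}$ for every $n$. Two bookkeeping tasks then remain: (a) control the difference between the main terms $\tilde{\Psi}(N)$ and $\Psi(N)$, and (b) control the difference between the two counting functions $\tilde{R}(x,N)$ and $R(x,N)$ for $\mu$-almost every $x$.

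Task (a) is immediate: since $0\leq \tilde{\psi}(q_n)-\psi(q_n)\leq 3n^{-\tau}$ and $\tau>1$, the summability of $\sum_n n^{-\tau}$ yields $\tilde{\Psi}(N)=\Psi(N)+O(1)$. Because Fact~\ref{fact1} allows us to assume $\Psi(N)\to\infty$, this bounded discrepancy is absorbed into the error terms of \eqref{countlacresult} and \eqref{countFSPresultsv}.

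Task (b) is where the real work lies. By construction $\tilde{R}(x,N)-R(x,N)$ equals the number of $n\leq N$ with $\psi(q_n)<\|q_nx-\gamma\|\leq 3n^{-\tau}$, which is bounded above by the number of $n\leq N$ for which $x$ lies in
$$E_n^{\tau}\,:=\,\{x\in\I\,:\, \|q_n x-\gamma\|\leq 3n^{-\tau}\}.$$
I would then apply the Borel--Cantelli Lemma~\ref{lem_BC} to conclude that the latter quantity stays bounded as $N\to\infty$ for $\mu$-almost every $x$, provided I can verify $\sum_n \mu(E_n^{\tau})<\infty$. For this I would invoke the bound \eqref{lem2_result1} of Lemma~\ref{lem2}, which gives
$$\mu(E_n^{\tau}) \ \leq \ 9n^{-\tau}+3\max_{s\in\N}|\widehat{\mu}(sq_n)|.$$
The first term is summable since $\tau>1$. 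The second term is summable as well, and this is the main technical point: in the lacunary setting of Theorem~\ref{asmplacthm} one has $\log q_n \gg n$, so $\max_s|\widehat{\mu}(sq_n)|\ll n^{-A}$ with $A>2$; in the setting of Theorem~\ref{mainSV} the growth condition \eqref{hyp_q} gives $\log q_n\gg n^{1/B}$, so $\max_s|\widehat{\mu}(sq_n)|\ll n^{-A/B}$ with $A>2B$. In both cases the series converges.

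Combining (a) and (b), for $\mu$-almost every $x\in F$ one would obtain
$$R(x,N)\ =\ \tilde{R}(x,N)+O(1)\ =\ 2\tilde{\Psi}(N)+O(\mathrm{error})\ =\ 2\Psi(N)+O(\mathrm{error}),$$
so any counting statement proved under the extra assumption \eqref{psi_is_not_small} transfers verbatim to the general $\psi$. The only real obstacle in this reduction is the convergence of $\sum_n \max_s|\widehat{\mu}(sq_n)|$, which is precisely engineered into the hypotheses $A>2$ and $A>2B$ of the two theorems.
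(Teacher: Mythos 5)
Your proposal is correct and follows essentially the same route as the paper: define the modified function as a pointwise maximum, show the correction term to the counting function is $O(1)$ almost everywhere via a convergence Borel--Cantelli argument (the paper just cites its Theorem~\ref{mainCONV} as a black box, whereas you re-derive the needed instance inline from Lemma~\ref{lem2}), and absorb the resulting bounded discrepancy into the error terms using Fact~\ref{fact1}. The paper phrases the reduction slightly more generally (any auxiliary $\omega$ with $\sum_n\omega(q_n)<\infty$, rather than the specific $3n^{-\tau}$), but this is a cosmetic difference.
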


It is easily seen that this follows on showing that there is no loss of generality in assuming that
\begin{equation} \label{psi_is_not_smallA}
\psi(q_n)\geq   \omega (q_n)   \quad \forall  \ \ n \in \N \,  ,
\end{equation}
where $ \omega : \N \to \I$  is any real, positive function such that $\textstyle{\sum_{n=1}^{\infty}}  \omega(q_n)  < \infty $.
With this in mind,  consider the auxiliary function
$$
\psi^* : q_n  \to \psi^*(q_n):=\max(\psi(q_n),\omega(q_n))  \,  .
$$
Trivially,   the function  $ \psi^* $   satisfies  \eqref{psi_is_not_smallA}  and by the definition of the counting function (see~\eqref{countdef}),  we have that
\begin{equation*} \label{ie_RRR}
R(x,N;\gamma,\psi, \ca{A})\leq R(x,N;\gamma,\psi^*, \ca{A}) \leq R(x,N;\gamma,\psi, \ca{A})+R(x,N;\gamma,\omega, \ca{A})  \, .
\end{equation*}
For $\mu$-almost all $x \in F$,  Theorem~\ref{mainCONV} implies that    $R(x,N;\gamma,\omega, \ca{A}) $  remains bounded as $N \to \infty$ and so it follows that
\begin{equation*} \label{rem_psi_big_lhs}
R(x,N;\gamma,\psi^*, \ca{A}) = R(x,N;\gamma,\psi, \ca{A})+O(1)  \, .
\end{equation*}
Now in view of Fact \ref{fact1}, we can assume that the sum $\textstyle{\sum_{n=1}^{\infty}}  \psi(q_n) $ diverges.   Hence $\textstyle{\sum_{n=1}^{\infty}}  \psi^*(q_n) $ diverges and so the desired statements associated with \eqref{countlacresult} and \eqref{countFSPresultsv} for the function $\psi$ are equivalent to the analogous statements for the modified function $\psi^*$.  In short, within the context of the right hand sides of~\eqref{countlacresult} and~\eqref{countFSPresultsv}, for $\mu$-almost all $x \in F$ the additional contribution from the counting function associated with $\omega$ is negligible.  Hence, without loss of generality  we can assume \eqref{psi_is_not_smallA}  and thus  \eqref{psi_is_not_small}.

\medskip

\begin{fact} \label{q_n_is_big}
 Without loss of generality, we can assume that for any given increasing sequence  $\ca{A}= (q_n)_{n\in \N} $   of natural numbers,   $q_1 >4 $.
\end{fact}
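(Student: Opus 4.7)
The plan is to observe that the restriction $q_1 > 4$ is harmless because only finitely many terms of any increasing sequence $\ca{A} = (q_n)_{n\in\N}$ of natural numbers can be $\leq 4$, and a finite alteration of the sequence affects both the counting function $R(x,N)$ and the partial sum $\Psi(N)$ only by a bounded amount.

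More precisely, I would argue as follows. Since $\ca{A}$ is increasing and integer-valued, there are at most four indices $n$ with $q_n \leq 4$, so there is a smallest $n_0 \in \N$ with $q_{n_0} > 4$. Define the shifted sequence $\ca{A}' = (q'_n)_{n\in\N}$ by $q'_n := q_{n + n_0 - 1}$, which is an increasing sequence of natural numbers with $q'_1 > 4$. Moreover, $\ca{A}'$ inherits all the structural hypotheses of $\ca{A}$ used in the theorems: if $\ca{A}$ is lacunary (Theorem~\ref{asmplacthm}), so is $\ca{A}'$; if $\ca{A} \subseteq \cA_{\cS}$ (Theorem~\ref{main_q}), so is $\ca{A}'$; and if $\ca{A}$ satisfies the growth condition \eqref{hyp_q} and is $\alpha$-separated (Theorem~\ref{mainSV}), then so is $\ca{A}'$, up to adjusting the constants $C$ and $m_0$.

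Directly from the definition of $R$ in \eqref{countdef} and of $\Psi$ in \eqref{def_Psi}, one checks that
\[
R(x, N; \gamma, \psi, \ca{A}) \;=\; R(x, N - n_0 + 1; \gamma, \psi, \ca{A}') \,+\, O(1),
\qquad
\Psi_{\ca{A}}(N) \;=\; \Psi_{\ca{A}'}(N - n_0 + 1) \,+\, O(1),
\]
where the $O(1)$ constants depend only on $n_0$ (hence only on the first few terms of $\ca{A}$), not on $x$ or $N$. By Fact~\ref{fact1}, we are assuming $\Psi(N) \to \infty$, so the $O(1)$ perturbations of the main term and of the error term on the right-hand sides of \eqref{countlacresult} and \eqref{countFSPresultsv} are negligible relative to the stated error terms $O\bigl(\Psi(N)^{2/3}(\log(\Psi(N)+2))^{2+\varepsilon}\bigr)$ and $O\bigl((\Psi(N)+E(N))^{1/2}(\log(\Psi(N)+E(N)+2))^{2+\varepsilon}\bigr)$. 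Hence the desired asymptotic statements for $\ca{A}$ are equivalent to the corresponding ones for $\ca{A}'$, and we may replace $\ca{A}$ by $\ca{A}'$ throughout.

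There is no real obstacle here; the only thing to verify carefully is that the measure-theoretic conclusion (\emph{for $\mu$-almost all $x$}) is unaffected by the index shift, which is automatic because the shift is deterministic and bijective between the two counting functions up to an additive $O(1)$. With this reduction in hand, we may assume $q_1 > 4$, which is convenient for applying the Fourier estimates of \S\ref{s_approximating_functions} that require $q \geq 4$.
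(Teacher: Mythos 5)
Your proposal is correct and follows essentially the same route as the paper: discard the finitely many terms $q_n \leq 4$, observe that this perturbs $R(x,N)$ and $\Psi(N)$ only by $O(1)$, and conclude via Fact~\ref{fact1} (that $\Psi(N)\to\infty$) that this is absorbed into the error terms of \eqref{countlacresult} and \eqref{countFSPresultsv}. You spell out explicitly that the shifted sequence inherits the structural hypotheses (lacunarity, membership in $\cA_{\cS}$, growth and $\alpha$-separation), which the paper leaves implicit, but the argument is the same.
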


To see this, simply observe that there are only a finite number of terms $q_n \in \ca{A}$ with $q_n \leq 4$.   Thus removing these `small' terms from $\ca{A}$ and working with the resulting sequence introduces at most an additional $ O(1)  $  term on  the right hand sides of~\eqref{countlacresult} and~\eqref{countFSPresultsv}. However, this  is negligible since by Fact \ref{fact1}  we are assuming that  $\Psi(N)\rightarrow\infty$ as $N\rightarrow\infty$.

\medskip

\begin{fact} \label{q_n_is_bigSV}
 Without loss of generality, we can assume that for any given  $\alpha$-separated increasing  sequence  $\ca{A}= (q_n)_{n\in \N} $   of natural numbers,   the associated implicit  constant $m_0=1 $.  In other words,   we can assume that for any integers $ 1 \leq m<n$, if
$
1 \leq |sq_m-t q_n|< q_m^{\alpha}
$
for some $s,t\in\N$,   then
$
s>m^{12}.
$
\end{fact}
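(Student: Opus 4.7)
The strategy is to mirror the trivial reduction used for Fact~\ref{q_n_is_big}: given an $\alpha$-separated sequence $\ca{A} = (q_n)_{n \in \N}$ with associated separation constant $m_0$, pass to the shifted tail $\widetilde{\ca{A}} = (\widetilde{q}_n)_{n \in \N}$ defined by $\widetilde{q}_n := q_{n + m_0 - 1}$, prove the desired counting statement for $\widetilde{\ca{A}}$, and then translate it back to $\ca{A}$ up to an $O(1)$ discrepancy that is absorbed by the error term.

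The first step is to verify that $\widetilde{\ca{A}}$ satisfies the $\alpha$-separation property with the implicit constant now equal to $1$. Indeed, for any $1 \leq m < n$ and any $s, t \in \N$ with $1 \leq |s\widetilde{q}_m - t\widetilde{q}_n| < \widetilde{q}_m^{\alpha}$, applying the $\alpha$-separation hypothesis to the indices $m + m_0 - 1$ and $n + m_0 - 1$ (both of which are $\geq m_0$) gives $s > (m + m_0 - 1)^{12} \geq m^{12}$, as required. The growth condition \eqref{hyp_q}, if assumed of $\ca{A}$, is automatically inherited by $\widetilde{\ca{A}}$ since $\log \widetilde{q}_n = \log q_{n + m_0 - 1} \geq C(n + m_0 - 1)^{1/B} \geq C n^{1/B}$; analogously the decay hypothesis \eqref{decaylac} on $\mu$ is untouched.

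The second step is the comparison of the relevant arithmetic functions for $\widetilde{\ca{A}}$ and $\ca{A}$. By construction, on reindexing we obtain
\[
\widetilde{R}(x, N) \, = \, R(x, N + m_0 - 1; \gamma, \psi, \ca{A}) + O(1), \qquad \widetilde{\Psi}(N) \, = \, \Psi(N + m_0 - 1) + O(1),
\]
and likewise $\widetilde{E}(N) = E(N + m_0 - 1) + O(1)$, since in each case the $O(1)$ discrepancy accounts only for the finitely many terms with index strictly less than $m_0$. Consequently, if Theorem~\ref{mainSV} (or Theorem~\ref{asmplacthm}) is established for $\widetilde{\ca{A}}$, reindexing $N' = N + m_0 - 1$ immediately yields the corresponding asymptotic formula \eqref{countFSPresultsv} (respectively \eqref{countlacresult}) for $\ca{A}$: the additive $O(1)$ is negligible compared with the error term because Fact~\ref{fact1} guarantees that $\Psi(N) \to \infty$.

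There is no genuine obstacle; the whole argument is bookkeeping. The only point requiring a moment of attention is confirming that the separation exponent $s > m^{12}$ survives the shift, and this is automatic from the monotonicity $(m + m_0 - 1)^{12} \geq m^{12}$. Exactly as in Fact~\ref{q_n_is_big}, it is the divergence of $\Psi$ secured by Fact~\ref{fact1} that makes the boundary contributions harmless, completing the reduction.
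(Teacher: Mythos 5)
Your proposal is correct and mirrors the paper's own argument exactly: drop the first $m_0-1$ terms, check that the shifted sequence is $\alpha$-separated with implicit constant $1$ (via $(m+m_0-1)^{12}\geq m^{12}$), and observe that removing finitely many terms perturbs the counting statements only negligibly because $\Psi(N)\to\infty$. One small imprecision worth flagging: the claim $\widetilde{E}(N)=E(N+m_0-1)+O(1)$ is not literally true, since the discarded terms involve pairs $(m',n')$ with $m'<m_0$ but $n'$ ranging up to $N+m_0-1$, contributing up to $O(\Psi(N+m_0-1))$ rather than $O(1)$; however $\widetilde{E}(N)\leq E(N+m_0-1)$ always holds, and since the error term in \eqref{countFSPresultsv} already involves $\Psi(N)+E(N)$, the extra $O(\Psi)$ is harmlessly absorbed, so the conclusion stands.
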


To see this, if  $m_0 \geq 2$ we simply remove the first $m_0-1$ terms of $ \ca{A}$ and observe that the resulting sequence $ (q_{n+(m_0-1)})_{n\in \N} $ has the desired properties.  We have already seen in the justification of Fact~\ref{q_n_is_big}, that removing a finite number of terms of $\ca{A}$ is negligible within the context of establishing~\eqref{countlacresult} and~\eqref{countFSPresultsv}.

\subsection{A mechanism for establishing counting results}

The following  statement~\cite[Lemma~1.5]{harman} represents an important  tool in the theory of metric Diophantine approximation for establishing counting statements  (along the lines of Theorems~\ref{asmplacthm}~$\&$~\ref{mainSV}).  It has its bases in the familiar variance method of probability theory and can be viewed as the quantitative form of the (diveregnce)  Borel-Cantelli Lemma \cite[Lemma~2.2]{durham}.

\begin{lem} \label{ebc}
Let $(X,\ca{B},\mu)$ be a probability space, let $(f_n(x))_{n \in \N}$ be a sequence of non-negative $\mu$-measurable functions defined on $X$, and $(f_n)_{n \in \N },\ (\phi_n)_{n  \in \N}$ be sequences of real numbers  such that
$$ 0\leq f_n \leq \phi_n \hspace{7mm} (n=1,2,\ldots).  $$

\noindent 
Suppose that for arbitrary  $a,b \in \N$ with $a <  b$, we have
\begin{equation} \label{ebc_condition1}
\int_{X} \left(\sum_{n=a}^{b} \big( f_n(x) -  f_n \big) \right)^2\mathrm{d}\mu(x)\, \leq\,  C\!\sum_{n=a}^{b}\phi_n
\end{equation}

\noindent for an absolute constant $C>0$. Then, for any given $\varepsilon>0$,  we have
\begin{equation} \label{ebc_conclusion}
\sum_{n=1}^N f_n(x)\, =\, \sum_{n=1}^{N}f_n\, +\, O\left(\Phi(N)^{1/2}\log^{\frac{3}{2}+\varepsilon}\Phi(N)+\max_{1\leq k\leq N}f_k\right)
\end{equation}
\noindent for $\mu$-almost all $x\in X$, where $
\Phi(N):= \sum\limits_{n=1}^{N}\phi_n
$. 
\end{lem}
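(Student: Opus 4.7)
The argument is a classical variance-method proof in the spirit of Erd\H{o}s--Koksma / G\'al--Koksma. Write $S_N(x) := \sum_{n=1}^N (f_n(x) - f_n)$, so that the goal is to bound $|S_N(x)|$ for $\mu$-almost every $x$. We may first dispose of the case $\Phi(N)$ bounded: then hypothesis \eqref{ebc_condition1} applied with $a=1$ bounds $\int S_N^2\,d\mu$ uniformly in $N$, and the conclusion holds trivially with the $\max_k f_k$ term absorbing everything. From now on assume $\Phi(N) \to \infty$.

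The first step is a Chebyshev argument along a geometrically spaced subsequence. Let $N_k$ be the smallest integer with $\Phi(N_k) \ge 2^k$. Hypothesis \eqref{ebc_condition1} with $a=1$, $b=N_k$ combined with Chebyshev's inequality yields
\[
\mu\bigl(|S_{N_k}(x)| > A_k\bigr) \;\le\; \frac{C\,\Phi(N_k)}{A_k^{2}}.
\]
The choice $A_k := \Phi(N_k)^{1/2}\, k^{(1+\varepsilon)/2}$ makes the right-hand side $O(k^{-1-\varepsilon})$, hence summable in $k$, and since $k \asymp \log \Phi(N_k)$, the first Borel--Cantelli lemma gives, $\mu$-a.e.,
\[
|S_{N_k}(x)| \;=\; O\!\bigl(\Phi(N_k)^{1/2}\,(\log \Phi(N_k))^{(1+\varepsilon)/2}\bigr) \qquad (k \to \infty).
\]

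The second step controls the fluctuation inside each gap $(N_k, N_{k+1}]$ via a Rademacher--Menshov-type maximal inequality. For any $N \in (N_k, N_{k+1}]$, decompose $(N_k, N]$ as a disjoint union of at most $\lceil \log_2(N_{k+1}-N_k)\rceil + 1$ dyadic sub-intervals of $(N_k, N_{k+1}]$. Cauchy--Schwarz on this decomposition, combined with \eqref{ebc_condition1} applied to each dyadic sub-interval and summed across the $\asymp \log(N_{k+1}-N_k)$ levels (each $n$ lying in one sub-interval per level), yields
\[
\int_X \max_{N_k \le N \le N_{k+1}} \!\bigl(S_N(x) - S_{N_k}(x)\bigr)^{2} d\mu(x) \;\ll\; \bigl(\log(N_{k+1}-N_k+2)\bigr)^{2} \bigl(\Phi(N_{k+1}) - \Phi(N_k)\bigr).
\]
Since by minimality of $N_{k+1}$ we have $\Phi(N_{k+1}) - \Phi(N_k) \le 2^{k+1} + \phi_{N_{k+1}}$, a second round of Chebyshev applied with threshold of order $\Phi(N_k)^{1/2}(\log \Phi(N_k))^{3/2+\varepsilon/2}$ makes the corresponding probabilities summable in $k$, so Borel--Cantelli delivers $\mu$-a.e.
\[
\max_{N_k \le N \le N_{k+1}} |S_N(x) - S_{N_k}(x)| \;=\; O\!\bigl(\Phi(N_k)^{1/2}(\log \Phi(N_k))^{3/2+\varepsilon/2} + \max_{n \le N_{k+1}} f_n(x)\bigr),
\]
the final single-term jump $f_{N_{k+1}}(x)$ being absorbed into $\max_{k \le N} f_k$. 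Combining the two steps for arbitrary $N$, placed in the unique interval $[N_k, N_{k+1})$, and noting $\Phi(N) \asymp \Phi(N_k)$, produces \eqref{ebc_conclusion}.

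The main obstacle is the maximal step: hypothesis \eqref{ebc_condition1} only controls the $L^{2}$ norm of full interval sums, not of suprema of partial sums, so recovering the exponent $\tfrac{3}{2}+\varepsilon$ rather than some larger power requires the careful Rademacher--Menshov dyadic tiling and the resulting logarithmic combinatorics of sub-blocks inside each gap $(N_k, N_{k+1}]$. The auxiliary term $\max_{1 \le k \le N} f_k$ in the conclusion is unavoidable because an $L^{2}$-based argument is blind to the size of a single summand, and at the right endpoint of each dyadic block precisely one such term cannot be detected.
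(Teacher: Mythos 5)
Your Rademacher--Menshov step is where the argument breaks. The maximal inequality for the block $(N_k,N_{k+1}]$ carries a factor $\bigl(\log(N_{k+1}-N_k+2)\bigr)^{2}$, i.e.\ the logarithm of the \emph{number of indices} in the block, whereas you need the logarithm of $\Phi$. These two quantities are unrelated under the hypotheses: $N_{k+1}$ is defined by $\Phi$ roughly doubling, which bounds $\Phi(N_{k+1})-\Phi(N_k)$ but places no constraint on how many summands it takes to accumulate that much mass. Take $\phi_n=1/n$: then $\Phi(N)\sim\log N$, so $N_k\sim\exp(2^k)$ and $\log(N_{k+1}-N_k)\asymp 2^k$. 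Your second Chebyshev bound is then of size
\[
\frac{\bigl(\log(N_{k+1}-N_k+2)\bigr)^{2}\bigl(\Phi(N_{k+1})-\Phi(N_k)\bigr)}{\Phi(N_k)\,(\log\Phi(N_k))^{3+\varepsilon}}\;\asymp\;\frac{2^{2k}}{k^{3+\varepsilon}},
\]
which diverges, so Borel--Cantelli gives nothing. A telling symptom: nowhere in your argument do you actually use that $f_n(x)\ge 0$ or that $0\le f_n\le\phi_n$, yet the lemma is false without those hypotheses.

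The correct mechanism (this is LeVeque/G\'al--Koksma, the statement being \cite[Lemma~1.5]{harman}) first \emph{groups} consecutive indices into superblocks $I_1,I_2,\dots$ each of $\Phi$-mass in $[1,2]$, with a single index $n$ with $\phi_n>1$ forming its own superblock. Because each superblock has mass at least $1$, at most $T$ of them fit below $\Phi$-level $T$, so applying Rademacher--Menshov to the \emph{grouped} partial sums $\sum_{n\in I_j}\bigl(f_n(x)-f_n\bigr)$ produces a factor $\log\Phi$ rather than $\log N$. The oscillation of $S_N(x)$ \emph{inside} a superblock is not handled in $L^2$ at all: using non-negativity and $f_n\le\phi_n$, a partial sum over part of $I_j$ is trapped pointwise between $-\sum_{n\in I_j}f_n\ge -2$ and $\sum_{n\in I_j}f_n(x)$, and the latter is $O(1)$ plus the $j$-th grouped centred sum. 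The single-index superblocks with $\phi_n\ge 1$ are exactly what force the $\max_{k\le N}f_k$ term into the conclusion, not (as you suggest) the random quantity $f_{N_{k+1}}(x)$. A further, smaller gap: the case $\Phi(N)$ bounded is not trivial, since $\|S_N\|_2=O(1)$ does not imply $|S_N(x)|=O(1)$ almost everywhere; one still runs the superblock machinery, which in that case gives $\sup_N|S_N(x)|<\infty$ a.e.
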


\begin{rem}
Note that in statistical terms, $f_n$ is the mean of $f_n(x)$ and \eqref{ebc_condition1} deals with the variance.
\end{rem}

\medskip

Given  a real number $\gamma\in\I  $,  a real, positive function $\psi:\mathbb{N}\rightarrow \I$  and an increasing sequence of natural numbers $\ca{A}= (q_n)_{n\in \N} $, we consider Lemma \ref{ebc}  with
\begin{equation} \label{Harman_choice_parameters}
X := \I \, , \qquad f_n(x):= \chi_{_{E_{q_n}^{\gamma}}}\!(x)  \qquad {\rm and } \qquad
 f_n=2\psi(q_n) \, ,
\end{equation}
where  $  \chi_{E_{q_n}^{\gamma}} \!\!$   is the characteristic function  of the set  $E_{q_n}^{\gamma}$  given by~\eqref{def_En}.   Then, clearly for any $ x \in \I$ and $N \in \N$  we have that the
$$
{\rm l.h.s. \ of \ } \eqref{ebc_conclusion}  \ =   \ R(x, N) \, ,
$$
where $R(x,N)$ is the counting function given by \eqref{countdef}.  Also,  the main term on the ${\rm r.h.s. \ of \ }$ \eqref{ebc_conclusion} is precisely $2\Psi(N)$ where  $\Psi(N) $ is the partial sum given by \eqref{def_Psi}.    Furthermore, it is easily verified  that for any $a,b \in \N$ with $a <  b$

\begin{equation*}
\begin{aligned}
\left(\sum_{n=a}^{b}(f_n(x)-f_n) \right)^2  &=  \ \left(\sum_{n=a}^{b}f_n(x)\right)^2 \, + \, \left(\sum_{n=a}^{b}f_n\right)^2 -  \ 2\sum_{n=a}^{b}f_n(x) \cdot \sum_{n=a}^{b}f_n \\[2ex]
   =  \  \sum_{n=a}^{b}f_n(x)  \  &+  \ 2\mathop{\sum\sum}_{a\leq m < n\leq b}f_m(x)f_n(x) \  +  \  \left(\sum_{n=a}^{b}f_n\right)^2  \ -  \ 2\sum_{n=a}^{b}f_n\cdot\sum_{n=a}^{b}f_n(x) \,  ,  \vspace{2mm}
\end{aligned}
\end{equation*}

\vspace{2mm}

\noindent and so it follows that
\vspace{2mm}
\begin{equation} \label{integral_square_estimate}
\begin{aligned}
  {\rm l.h.s. \ of \ } \eqref{ebc_condition1} 
  \ =  \   \sum_{n=a}^{b}\mu(E_{q_n}^{\gamma})   \, + \, 2 &\mathop{\sum\sum}_{a\leq m<n\leq b}  \mu(E_{q_m}^{\gamma}\cap E_{q_n}^{\gamma})  \\[2ex] \, &- \ 4 \sum_{n=a}^{b} \psi(q_n) \left( \sum_{n=a}^{b}\mu(E_{q_n}^{\gamma})  - \sum_{n=a}^{b} \psi(q_n) \right)
\end{aligned}
\end{equation}

\vspace{2mm}

\noindent where $\mu$ is a non-atomic probability  measure supported on a subset of
$ \ \I \, $.
The upshot of this is that in view of Lemma \ref{ebc},  the proof of  Theorem~\ref{asmplacthm} and Theorem~\ref{mainSV} boils down to  `appropriately' estimating  the right hand side  of \eqref{integral_square_estimate}. Estimating the measure of the intersection of the sets  $E_{q_n}^{\gamma} $ is where the main difficulty lies. In short we need to show that these `building block' sets are independent on average with an acceptable error term. Suppose for the moment that there was  no error term; in other words
\begin{equation} \label{heaven}
2 \mathop{\sum\sum}_{a\leq m<n\leq b} \mu(E_{q_m}^{\gamma}\cap E_{q_n}^{\gamma}) \ \leq  \  \left( \sum_{n=a}^{b}   \mu(E_{q_n}^{\gamma})  \right)^2  .
\vspace{2mm}
\end{equation}
Then,
and as we shall see in a moment, it is easy to deduce  the desired counting statements \eqref{countlacresult} and \eqref{countFSPresultsv} from Lemma~\ref{ebc} once  we have established the following  statement.


\begin{lem} \label{lem_sum_mu_good}
Let $\mu$ be a probability  measure supported on a subset $F$ of
$ \ \I \, $.  Let $\ca{A}= (q_n)_{n\in \N} $ be an increasing sequence of natural numbers that satisfies the growth condition  \eqref{hyp_q} for some constants  $B \ge  1$  and $C > 0$.  Furthermore, assume that  $q_1 > 4 $.
Let $\gamma\in\I  $ and $\psi:\mathbb{N}\rightarrow \I$ be a real, positive function that satisfies~\eqref{psi_is_not_small}.  Suppose there exists a constant $
A  \, >  \,  2  B $ so that \eqref{decaylac} is satisfied. Then, for arbitrary  $a,b \in \N$ with $a <  b$, we have
\begin{equation} \label{lem_sum_mu_good_result}
\sum_{n=a}^b\mu(E_{q_n}^{\gamma}) = 2\sum_{n=a}^b\psi(q_n) \, + \, O\left( \min\Big(1,\sum_{n=a}^b\psi(q_n)\Big) \right).
\end{equation}
\end{lem}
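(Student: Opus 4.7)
The plan is to sum the pointwise Fourier sandwich (\ref{mu_ie}) over $n \in [a,b]$ and estimate the resulting Fourier remainder via Lemma~\ref{main_part_bounded}. Introducing a parameter $\varepsilon \in (0,1]$ and combining both the upper and lower inequalities in (\ref{mu_ie}) yields, for each $n$,
\[
|\mu(E_{q_n}^\gamma) - 2\psi(q_n)| \;\leq\; \varepsilon\,\psi(q_n) + \Biggl|\,\sum_{s \in \Z \setminus \{0\}} \widehat{W}^{\pm}_{q_n, \gamma, \varepsilon}(sq_n)\,\widehat{\mu}(-sq_n)\,\Biggr|.
\]
I would then invoke Lemma~\ref{main_part_bounded} to obtain $\sum_{s}|\widehat{W}^{\pm}_{q_n, \gamma, \varepsilon}(sq_n)| \ll \varepsilon^{-1/2}$, and combine it with the decay hypothesis (\ref{decaylac}) (using $|sq_n| \geq q_n$ for $s \neq 0$) to get $|\widehat{\mu}(-sq_n)| \ll (\log q_n)^{-A}$. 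This gives the termwise estimate
\[
|\mu(E_{q_n}^\gamma) - 2\psi(q_n)| \;\ll\; \varepsilon\,\psi(q_n) + \varepsilon^{-1/2}\,(\log q_n)^{-A}.
\]

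The crux is to control $\sum_{n=a}^{b}(\log q_n)^{-A}$ by $\min(1, S)$ where $S := \sum_{n=a}^{b}\psi(q_n)$, and I would do this by combining two independent observations. First, the growth condition (\ref{hyp_q}) $\log q_n \geq Cn^{1/B}$ together with the assumption $A > 2B$ yields $(\log q_n)^{-A} \ll n^{-A/B}$ with $A/B > 2$, so the series $\sum_{n \geq 1}(\log q_n)^{-A}$ converges to a finite absolute constant. Second, Fact~\ref{rem_psi_big} permits the assumption $\psi(q_n) \geq 3n^{-\tau}$ for any $\tau > 1$; choosing $\tau \in (1, A/B)$ (possible since $A/B > 2$) produces the pointwise comparison $(\log q_n)^{-A} \ll n^{\tau - A/B}\psi(q_n) \leq \psi(q_n)$. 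Together these give
\[
\sum_{n=a}^{b}(\log q_n)^{-A} \;\ll\; \min\bigl(1,\, S\bigr).
\]

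Finally, summing the termwise bound over $n \in [a,b]$ and choosing $\varepsilon$ to be a sufficiently small absolute constant gives
\[
\Biggl|\,\sum_{n=a}^{b}\mu(E_{q_n}^\gamma) - 2S\,\Biggr| \;\ll\; \varepsilon S + \varepsilon^{-1/2}\min(1,S) \;\ll\; \min(1,S),
\]
which is the claimed conclusion. The principal technical point is the simultaneous availability of an absolute-constant bound and a comparison with $\psi(q_n)$ for the tail $\sum(\log q_n)^{-A}$; this hinges crucially on the strict strengthening $A > 2B$ beyond the convergence threshold, together with the lower bound (\ref{psi_is_not_small}) secured by Fact~\ref{rem_psi_big}. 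Without either ingredient, the Fourier remainder would only admit one of the two bounds and the sharp $\min$-shape of the error would be lost.
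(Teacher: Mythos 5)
The termwise estimate you derive is correct, and your observation that
\[
\sum_{n=a}^{b}(\log q_n)^{-A} \;\ll\; \min\Bigl(1,\, \textstyle\sum_{n=a}^{b}\psi(q_n)\Bigr)
\]
via the two complementary ingredients (absolute summability from $A/B>2$, and domination $(\log q_n)^{-A}\ll\psi(q_n)$ from \eqref{psi_is_not_small}) is genuinely the right way to control that piece — it is essentially the paper's device as well. However, the final step is wrong: with $S:=\sum_{n=a}^b\psi(q_n)$, the quantity $\varepsilon S + \varepsilon^{-1/2}\min(1,S)$ is \emph{not} $\ll\min(1,S)$ for any fixed absolute constant $\varepsilon$. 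When $S>1/\varepsilon$ the first term $\varepsilon S$ exceeds $1=\min(1,S)$ and grows linearly in $S$. Even allowing $\varepsilon$ to depend on the block $[a,b]$ and optimising the tradeoff (balance $\varepsilon S$ against $\varepsilon^{-1/2}$) only yields an error of order $S^{1/3}$ in the large-$S$ regime, which is strictly weaker than the claimed $O(1)$ and would spoil the downstream estimates in Propositions~\ref{indiethm1} and \ref{indiethm4}.

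The missing idea is that the mollification parameter must be allowed to vary with $n$. The paper's proof takes a \emph{sequence} $(\varepsilon_n)$ in \eqref{mu_ie}, and, for the $O(1)$ bound, chooses
\[
\varepsilon_n := \min\bigl(1,\Psi(n)^{-2}\bigr), \qquad \Psi(n)=\textstyle\sum_{k=1}^n\psi(q_k).
\]
With this choice the linear-in-$S$ term is replaced by
\[
\sum_{n=a}^{b}\psi(q_n)\,\varepsilon_n \;=\; \sum_{n=a}^{b}\frac{\psi(q_n)}{\max\bigl(1,\Psi(n)\bigr)^{2}},
\]
which is bounded by an absolute constant by the telescoping estimate of Lemma~\ref{cor_sum_S_square} (Appendix~D): this is the Abel-type fact that $\sum_n s_n/S_n^2$ converges whenever $S_n\to\infty$. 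Meanwhile the Fourier-tail contribution remains summable because $\varepsilon_n^{-1}\le n^2$, so $\sum_n n^{-A/B}\varepsilon_n^{-1/2}\le\sum_n n^{1-A/B}<\infty$ by $A>2B$. Your proof with a single $\varepsilon$ cannot replicate this cancellation; introducing the $n$-dependent mollification tuned to the running partial sums $\Psi(n)$ is the crux, and it is the step you need to add.
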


 \bigskip

 \noindent Note that in view of Facts~1-3 at the start of this section, we can assume the hypotheses   of Theorems~\ref{asmplacthm} $\&$ \ref{mainSV} satisfy those of the lemma. Hence,  if we genuinely  had  independence on average (i.e., \eqref{heaven} holds), then Lemma~\ref{lem_sum_mu_good}  would imply that
 \vspace{2mm}
\begin{eqnarray*} \label{integral_square_estimategh}
{\rm r.h.s. \ of \ } \eqref{integral_square_estimate}
   & \leq  & \   \sum_{n=a}^{b}\mu(E_{q_n}^{\gamma})   \, + \, \left(\sum_{n=a}^{b}\mu(E_{q_n}^{\gamma})  \right)^2  - \ 4 \sum_{n=a}^{b} \psi(q_n) \left( \sum_{n=a}^{b}\mu(E_{q_n}^{\gamma})  - \sum_{n=a}^{b} \psi(q_n) \right)  \\[2ex]
   & = &  O\left( \sum_{n=a}^b\psi(q_n) \right)  \, .
\end{eqnarray*}

\vspace{2mm}

\noindent Thus, we conclude that
 $${\rm l.h.s. \ of \ } \eqref{ebc_condition1} \, \ll  \,    2 \,  \sum_{n=a}^{b}\psi(q_n)  \, := \,  \sum_{n=a}^{b}  f_n$$
 and  \eqref{countlacresult} and \eqref{countFSPresultsv} follow on applying Lemma~\ref{ebc} with $\phi_n := f_n$. In fact, we would be able to improve the associated  error terms.
 However, we do not have \eqref{heaven} and establishing the following estimates is at the heart of proving  the desired counting statements.

 Throughout the paper,  we use the notation $\log^+(x):=\max(0,\log(x))$.

\begin{prop}  \label{indiethm1}  Let  $F$, $\mu$, $\ca{A}= (q_n)_{n\in \N} $, $\gamma $ and $\psi$ be as in  Theorem~\ref{asmplacthm}.
Furthermore, assume $\psi$ satisfies \eqref{psi_is_not_small} and that  $q_1 > 4$. Then, for arbitrary  $a,b \in \N$ with $a <  b$, we have
\begin{multline} \label{delta_set_frac23}
 2 \mathop{\sum\sum}_{a\leq m<n\leq b} \mu(E_{q_m}^{\gamma}\cap E_{q_n}^{\gamma}) \ \leq   \  \left( \sum_{n=a}^{b}   \mu(E_{q_n}^{\gamma})  \right)^2 \\[2ex]
+
O\left(\left(\sum_{n=a}^{b}\psi(q_n)\right)^{4/3}\left(\log^+\left(\sum_{n=a}^{b}\psi(q_n)\right)+1\right)+\sum_{n=a}^{b}\psi(q_n)\right).
\end{multline}
\end{prop}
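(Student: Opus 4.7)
The plan is to bound each intersection via an upper-approximation Fourier integral and then expand. For a parameter $\varepsilon\in(0,1]$ (to be chosen) we use the pointwise estimate $\chi_{E_{q_n}^\gamma}\le W^+_{q_n,\gamma,\varepsilon}$ from \S2 to write $\mu(E_{q_m}^\gamma\cap E_{q_n}^\gamma)\le V_{mn}:=\int_0^1 W^+_{q_m,\gamma,\varepsilon}\,W^+_{q_n,\gamma,\varepsilon}\,d\mu$. Expanding both factors in Fourier series and using $\widehat{W}^+_{q,\gamma,\varepsilon}(k)=0$ unless $q\mid k$ gives
\[
V_{mn}=\sum_{s,t\in\Z}\widehat{W}^+_{q_m,\gamma,\varepsilon}(sq_m)\,\widehat{W}^+_{q_n,\gamma,\varepsilon}(tq_n)\,\widehat{\mu}\bigl(-(sq_m+tq_n)\bigr).
\]
Separating according to whether $s$ and/or $t$ vanish produces the decomposition $V_{mn}=U_mU_n-A_mA_n+B_{mn}$, where $U_m:=\int W^+_{q_m,\gamma,\varepsilon}\,d\mu\ge\mu(E_{q_m}^\gamma)$, $A_m:=\sum_{s\ne 0}\widehat{W}^+_{q_m,\gamma,\varepsilon}(sq_m)\widehat{\mu}(-sq_m)$ is the single-frequency correction, and $B_{mn}:=\sum_{s,t\ne 0}\widehat{W}^+_{q_m}(sq_m)\widehat{W}^+_{q_n}(tq_n)\widehat{\mu}(-(sq_m+tq_n))$ is the doubly off-zero Fourier tail.

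Summing over $m\ne n$, $\sum_{m\ne n}U_mU_n\le\bigl(\sum_n U_n\bigr)^2$. From the sandwich~\eqref{mu_ie} one has $0\le U_n-\mu(E_{q_n}^\gamma)\le 2\varepsilon\psi(q_n)+|A_n|+|A_n^-|$, which (with $\sum_n\mu(E_{q_n}^\gamma)\ll\Psi$ by Lemma~\ref{lem_sum_mu_good}) yields
\[
\Bigl(\sum_n U_n\Bigr)^2-\Bigl(\sum_n\mu(E_{q_n}^\gamma)\Bigr)^2\ \ll\ \varepsilon\Psi^2+\Psi\,\varepsilon^{-1/2}+\varepsilon^{-1},
\]
where $\Psi:=\sum_{n=a}^b\psi(q_n)$. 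The $A$-correction is handled via Lemma~\ref{main_part_bounded} combined with the decay \eqref{decaylac} and the lacunarity bound $\log q_m\gg m$: we get $|A_m|\ll\varepsilon^{-1/2}(\log q_m)^{-A}\ll\varepsilon^{-1/2}m^{-A}$, so (since $A>2$) $\bigl|\sum_{m\ne n}A_mA_n\bigr|\le\bigl(\sum_m|A_m|\bigr)^2\ll\varepsilon^{-1}$.

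The real obstacle is bounding $\sum_{m<n}|B_{mn}|$. A trivial use of $|\widehat{\mu}|\le 1$ inside \eqref{ub_double_sum} gives only $|B_{mn}|\ll\varepsilon^{-1}$ per pair, which is hopeless once summed. Instead, for each pair $m<n$ I would split the $(s,t)$-sum into a non-resonant part, where $|sq_m+tq_n|$ is "large" and the logarithmic decay in~\eqref{decaylac} delivers a factor $\ll(\log q_{\min(m,n)})^{-A}\ll(\min(m,n))^{-A}$, and a resonant part where $|sq_m+tq_n|$ is small. In the resonant case the solutions of $sq_m+tq_n=h$ lie on an arithmetic progression of spacing $q_n/\gcd(q_m,q_n)$, and the rapid decay~\eqref{W_ub_1s2} in $s,t$ keeps their combined mass under control; lacunarity ensures that these resonances remain rare as $n-m$ grows. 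Careful accounting gives a bound of the shape $\sum_{m<n}|B_{mn}|\ll\Psi\,\varepsilon^{-1/2}\bigl(\log^+\Psi+1\bigr)+\varepsilon^{-1}$.

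Assembling the three estimates and choosing $\varepsilon\sim\Psi^{-2/3}$ balances the leading errors $\varepsilon\Psi^2$ and $\Psi\,\varepsilon^{-1/2}$ at the common value $\Psi^{4/3}$, with the logarithmic factor entering through the $B_{mn}$ bound and the additive $\Psi$ absorbing both the contribution $\sum_n\mu(E_{q_n}^\gamma)^2\ll\Psi$ and the small-$\Psi$ regime. The principal obstacle is clearly the off-diagonal bound on $\sum_{m<n}|B_{mn}|$: it is precisely there that the logarithmic decay of $\widehat{\mu}$ and the lacunarity of $\ca{A}$ must be used together in an essential way to prevent the cross-frequency contributions from overwhelming the main term.
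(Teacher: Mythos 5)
Your high-level strategy mirrors the paper's: expand $\mu(E^\gamma_{q_m}\cap E^\gamma_{q_n})$ through the Fourier series of the product $W^+_{q_m}W^+_{q_n}$, use the bounds \eqref{W_ub_psi}--\eqref{W_ub_1s2}, invoke the logarithmic decay of $\widehat{\mu}$ together with $\log q_n\gg n$, exploit lacunarity to control resonances, and optimise a smoothing parameter to land on the exponent $4/3$ (your $\varepsilon\sim\Psi^{-2/3}$ corresponds precisely to the paper's choice $\delta=2/3$).  The difference is organisational, and it matters.  The paper writes $\mu(E_m\cap E_n)\le \widehat{W}^+_{m,n}(0)+S_{m,n}$, so that the \emph{exact} resonances $sq_m+tq_n=0$ land in the zeroth Fourier coefficient $\widehat{W}^+_{m,n}(0)=\int W^+_m W^+_n\,dx$, which is then bounded by the Lebesgue measure of the thickened intersection.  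This step -- equation \eqref{main_q_Harman_ub2} -- is what produces the clean gcd error $O\bigl((q_m,q_n)\min(\psi(q_m)/q_m,\psi(q_n)/q_n)\bigr)$, and for a lacunary sequence $\sum_{m<n}q_m/q_n\ll 1$ makes its total contribution $O(\Psi)$.  The near-resonances ($1\le |sq_m-tq_n|<q_m^\alpha$) and the off-resonant bulk are separated off into $T(m,n)$ and $S_1,\dots,S_6$ in Lemma~\ref{prop_sum_S_m_n}, and the paper also runs a \emph{varying} sequence $\varepsilon_n=\min(2^{-\delta},\Psi(n)^{-\delta})$, so that the cross term $\sum_{m<n}\varepsilon_m\psi(q_m)\psi(q_n)$ is bounded via Lemma~\ref{cor_sum_S_log}; this is where the logarithmic factor in \eqref{delta_set_frac23} actually originates.

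The gap in your sketch is precisely the point you flag as ``the real obstacle''.  Your decomposition $V_{mn}=U_mU_n-A_mA_n+B_{mn}$ puts the exact resonances $sq_m+tq_n=0$ (i.e.\ $s=\ell q_n/\gcd$, $t=-\ell q_m/\gcd$, $\ell\ne0$) inside $B_{mn}$, where $\widehat{\mu}(0)=1$ offers no decay and you must fall back on \eqref{W_ub_psi}/\eqref{W_ub_1s2} alone.  Your asserted bound $\sum_{m<n}|B_{mn}|\ll\Psi\varepsilon^{-1/2}(\log^+\Psi+1)+\varepsilon^{-1}$ is not substantiated, and a direct calculation shows why it is delicate: estimating the $\ell$-sum by $|\widehat{W}^+_{q_n}(tq_n)|\le 3\psi(q_n)$ and $|\widehat{W}^+_{q_m}(sq_m)|\le g^2/(\pi^2\ell^2 q_n^2\psi(q_m)\varepsilon)$ produces $\ll g^2\psi(q_n)/(q_n^2\psi(q_m)\varepsilon)$, and after summing over $m<n$ with the only available lower bound $\psi(q_m)\gtrsim m^{-\tau}$ from \eqref{psi_is_not_small}, the quantity is not visibly $O(\Psi)$ -- the $1/\psi(q_m)$ factor is the trouble, and it has no counterpart in the paper's gcd estimate.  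You therefore need the Lebesgue-measure identity for $\widehat{W}^+_{m,n}(0)$ (or an equivalent rearrangement) to tame the exact resonances; the decay of the Fourier coefficients in $s,t$ alone does not obviously suffice.  A smaller point: with a constant $\varepsilon$ you must cap $\varepsilon\le 1$ in the small-$\Psi$ regime, and then the errors $\varepsilon^{-1/2}$, $\varepsilon^{-1}$ are $O(1)$ rather than $O(\Psi)$; this is exactly where the paper invokes \eqref{psi_is_not_small} (see its Cases~2 in \S5), and your sketch should say so explicitly.
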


\vspace*{3ex}

\begin{prop} \label{indiethm4} Let  $F$, $\mu$, $\ca{A}= (q_n)_{n\in \N} $, $\gamma $ and $\psi$ be as in  Theorem~\ref{mainSV}.
Furthermore, assume $\psi$ satisfies \eqref{psi_is_not_small}$, q_1 > 4$ and that  $\ca{A}$ is $\alpha$-separated with the implicit constant $m_0=1$.   Then, for arbitrary  $a,b \in \N$ with $a <  b$, we have
\begin{multline}\label{overlapfinal_finite_set_of_primesSV}
2 \mathop{\sum\sum}_{a\leq m<n\leq b} \mu(E_{q_m}^{\gamma}\cap E_{q_n}^{\gamma}) \ \leq \  \left( \sum_{n=a}^{b}   \mu(E_{q_n}^{\gamma})  \right)^2 + O\left(\mathop{\sum\sum}_{a\leq m<n\leq b} (q_m,q_n)\min\left(\frac{\psi(q_m)}{q_m},\frac{\psi(q_n)}{q_n}\right)\right) \\[2ex]
+ O\left(\left(\sum_{n=a}^{b}\psi(q_n)\right)\log^+\left(\sum_{n=a}^{b}\psi(q_n)\right)+\sum_{n=a}^{b}\psi(q_n)\right).
\end{multline}
\end{prop}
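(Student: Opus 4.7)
Following the Fourier method of Section~2, for a parameter $\varepsilon\in(0,1]$ to be optimized in terms of $S:=\sum_{n=a}^{b}\psi(q_n)$, I would start from the pointwise inequality $\chi_{E_q^{\gamma}}\le W_{q,\gamma,\varepsilon}^{+}$ to write
\[
\mu(E_{q_m}^{\gamma}\cap E_{q_n}^{\gamma})\le \int_{0}^{1} W_{q_m,\gamma,\varepsilon}^{+} W_{q_n,\gamma,\varepsilon}^{+}\,d\mu = \sum_{s,t\in\Z}\widehat{W}_{q_m,\gamma,\varepsilon}^{+}(sq_m)\,\widehat{W}_{q_n,\gamma,\varepsilon}^{+}(tq_n)\,\widehat{\mu}(-sq_m-tq_n),
\]
the Fourier expansion being justified by the uniform convergence of the series for $W_{q,\gamma,\varepsilon}^{\pm}$ established in Section~2. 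The plan is to split the double sum into a \emph{resonance} locus $\{sq_m+tq_n=0\}$ and its complement, extracting from the former the main term together with the GCD error, and from the latter (using $\alpha$-separation combined with the logarithmic decay of $\widehat{\mu}$) the additional $S\log^{+}S$ error.

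On the resonance locus, the pair $(s,t)=(0,0)$ contributes $(2+\varepsilon)^2\psi(q_m)\psi(q_n)$; summing $2\sum_{a\le m<n\le b}$ and using Lemma~\ref{lem_sum_mu_good} to identify $4S^2$ with $\bigl(\sum_n\mu(E_{q_n}^{\gamma})\bigr)^2+O(S\min(1,S))$, the discrepancy is $O(\varepsilon S^2+S)$. The remaining resonance solutions are $(s,t)=(kq_n/d,-kq_m/d)$ with $d:=(q_m,q_n)$ and $k\in\Z\setminus\{0\}$; these are handled by splitting the sum over $k$ at the crossover between \eqref{W_ub_psi} and \eqref{W_ub_1s2} (around $|k|\sim d/\max(q_m\psi(q_n),q_n\psi(q_m))$) and yield a per-pair contribution $\ll (q_m,q_n)\min(\psi(q_m)/q_m,\psi(q_n)/q_n)$, producing the advertised GCD error.

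For the off-diagonal piece, a short case analysis on the signs of $s,t$ combined with the elementary bound $|sq_m+tq_n|\ge \min(q_m,q_n)$ whenever $st\ge 0$ or one of $s,t$ vanishes, shows that $0<|sq_m+tq_n|<q_m^{\alpha}$ can only occur when $s,t\neq 0$ have opposite signs; in that case, the $\alpha$-separation hypothesis (Fact~\ref{q_n_is_bigSV}) forces $|s|>m^{12}$, and combining \eqref{W_ub_1s2} with the lower bound $\psi(q_m)\ge 3m^{-\tau}$ from~\eqref{psi_is_not_small} (for a convenient $\tau>1$ close to $1$) renders $|\widehat{W}_{q_m,\gamma,\varepsilon}^{+}(sq_m)|$ negligibly small. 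In the complementary off-diagonal regime, $|\widehat{\mu}|\le C(\log q_{\max})^{-A}$ combines with Lemma~\ref{main_part_bounded} to provide per-pair control; the assumption $A>2B$ together with the growth condition $\log q_n\ge Cn^{1/B}$ from~\eqref{hyp_q} then ensures convergence of the required double sums over $(m,n)$.

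Assembling all contributions and optimizing $\varepsilon$ yields the desired $O(S\log^{+}S+S)$ error alongside the GCD term. The main obstacle is the off-diagonal analysis: $\alpha$-separation is indispensable for taming the near-resonance lattice points, and the sharp form of the $\widehat{\mu}$ decay (via $\log q_{\max}$ rather than $\log q_m$ whenever $st\ge 0$, so that $|sq_m+tq_n|\ge q_n$) is what makes the required double sum converge and ultimately enables the improvement over the weaker $S^{4/3}$ bound of Proposition~\ref{indiethm1}.
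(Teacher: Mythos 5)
Your overall structure is right and closely mirrors the paper: the ``resonance locus'' $\{sq_m+tq_n=0\}$ is exactly the $k=0$ Fourier coefficient $\what{W}_{m,n}^{+}(0)$ of the paper (cf.\ \eqref{greek}), the sign analysis correctly isolates the opposite-sign near-resonance pairs, and invoking $\alpha$-separation to force $|s|>m^{12}$ on those pairs is precisely what kills the analogue of the paper's $T_1(m,n)$. The critical gap is the treatment of the smoothing parameter.

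You propose a single $\varepsilon\in(0,1]$ ``to be optimized in terms of $S$'', but this fails because two error contributions pull $\varepsilon$ in opposite directions. On the resonance locus, the $(0,0)$ pair contributes $4(1+\varepsilon)^2\psi(q_m)\psi(q_n)$ (cf.\ \eqref{main_q_Harman_ub2}), and summing $2\sum_{a\le m<n\le b}$ and comparing with $\bigl(\sum_n\mu(E_{q_n}^\gamma)\bigr)^2$ via Lemma~\ref{lem_sum_mu_good} leaves a surplus $O\bigl(\varepsilon\sum_{m<n}\psi(q_m)\psi(q_n)\bigr)=O(\varepsilon S^2)$; forcing this to be $O(S\log^{+}S)$ requires $\varepsilon\lesssim(\log^{+}S)/S$. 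But the near-resonance term — after bounding $|\what{W}_{q_m,\gamma,\varepsilon}^{+}(sq_m)|$ by \eqref{W_ub_1s2} for $|s|>m^{12}$, $|\what{W}_{q_n,\gamma,\varepsilon}^{+}(tq_n)|$ by \eqref{W_ub_psi}, summing over $s$, and then over $m<n\le b$ — yields $O(S/\varepsilon)$, which is $O(S\log^{+}S)$ only if $\varepsilon\gtrsim 1/\log^{+}S$. For $S$ large these two requirements are incompatible, so a single optimized $\varepsilon$ cannot close the argument. The paper escapes this tension by making the smoothing index-dependent: $\varepsilon_n:=\min\bigl(2^{-\delta},\Psi(n)^{-\delta}\bigr)$ with $\delta=1$, so that on one hand Lemma~\ref{cor_sum_S_log} gives $\sum_{m<n}\varepsilon_m\psi(q_m)\psi(q_n)\ll S\log^{+}S$, and on the other the bound $\varepsilon_m^{-1}\le 2m$ tames the near-resonance term to $|T_2(m,n)|\ll\psi(q_n)/(m^3\varepsilon_m)\ll\psi(q_n)/m^2$, which sums to $O(S)$. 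Your plan needs to be rewritten around such a sequence of smoothing parameters; the fixed-$\varepsilon$ version is not a notational shortcut but a genuine obstruction.

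A secondary point: directly estimating the nonzero resonance pairs $(s,t)=(kq_n/d,-kq_m/d)$ by the $\min$ of \eqref{W_ub_psi} and \eqref{W_ub_1s2} leaks a factor of $\varepsilon^{-1/2}$ into the GCD error (and your stated crossover point $|k|\sim d/\max(q_m\psi(q_n),q_n\psi(q_m))$ is off by $\varepsilon^{1/2}$). The paper's route — observing $\what{W}_{m,n}^{+}(0)\le\bigl|(1+\varepsilon_m)E_{q_m}^\gamma\cap(1+\varepsilon_n)E_{q_n}^\gamma\bigr|$ and citing the $\varepsilon$-free Lebesgue intersection formula $|E_q^\gamma\cap E_{q'}^\gamma|=4\psi(q)\psi(q')+O\bigl((q,q')\min(\psi(q)/q,\psi(q')/q')\bigr)$ from Harman — avoids that loss and gives the GCD term cleanly.
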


\bigskip

\begin{rem}\label{remedy}
We stress that within the context of establishing Theorems~\ref{asmplacthm} $\&$ \ref{mainSV}, there is no harm in assuming the additional hypotheses imposed in the statements of Propositions~\ref{indiethm1}~$\&$~\ref{indiethm4}.  The justification for this is provided by Facts~1-4 at the start of this section.
 \end{rem}

\medskip

The  proofs of the above  propositions are technically a little  involved and will be the subject of \S\ref{secpresv} and \S\ref{svcountproof}.  In the remaining part of this section we shall first prove Lemma~\ref{lem_sum_mu_good} and then go onto  completing  the proofs of Theorems \ref{asmplacthm} $\&$ \ref{mainSV} modulo the above   propositions.

\medskip


\subsection{Proof of Lemma \ref{lem_sum_mu_good}}
For any given sequence  of real numbers $(\varepsilon_n)_{n=a}^b$ in $(0,1]$, it follows via~\eqref{mu_ie} that
\begin{equation} \label{lem_sum_mu_good_ie1}
\left|\sum_{n=a}^b\mu(E_{q_n})-2\sum_{n=a}^b\psi(q_n)\right|\leq\sum_{n=a}^b\psi(q_n)\varepsilon_n+
\max_{\circ\in\{+,-\}}\sum_{n=a}^b\left|\sum_{k\in \Z \setminus \{0\}}\widehat{W}_{q_n,\gamma,\varepsilon_n}^{\circ}(k)\widehat{\mu}(-k)\right|.
\end{equation}

\noindent Now, in view of \eqref{fcoef} and  \eqref{fcoef2}, we have that
$\widehat{W}_{q_n,\gamma,\varepsilon_n}^{\pm}(k)=0$
unless $k = s q_n$ for some integer $s$. Also, on using \eqref{decaylac} and  \eqref{hyp_q}
it  can be verified  that $ |\hat{\mu}(sq_n)| \ll n^{-A/B} $ for any non-zero integer~$s$ and $ n \in \N$.  Hence  by Lemma~\ref{main_part_bounded},  it follows that
\begin{equation} \label{lem_sum_mu_good_ie2sv}
\begin{aligned}
\left|\sum_{n=a}^b\mu(E_{q_n})-2\sum_{n=a}^b\psi(q_n)\right|& \ \leq \ \sum_{n=a}^b\psi(q_n)\varepsilon_n  \, +  \,
\sum_{n=a}^b\frac{3}{n^{A/B}\varepsilon_n^{1/2}} \, .
\end{aligned}
\end{equation}

\noindent In turn, this together with~\eqref{psi_is_not_small} and the fact that $A>2B$
implies that
\begin{equation*} 
\left|\sum_{n=a}^b\mu(E_{q_n})-2\sum_{n=a}^b\psi(q_n)\right|\leq\sum_{n=a}^b\psi(q_n)\varepsilon_n+\sum_{n=a}^b\frac{\psi(q_n)}{n^{A/2B}\varepsilon_n^{1/2}}.
\end{equation*}

\noindent On letting  $\varepsilon_n=1$ for all $a \leq n \leq b$,  we   obtain the upper bound
\begin{equation} \label{lem_sum_mu_good_result_half1}
\left|\sum_{n=a}^b\mu(E_{q_n})-2\sum_{n=a}^b\psi(q_n)\right|\ll\sum_{n=a}^b\psi(q_n) \, .
\end{equation}

\bigskip

\noindent To complete the proof of the lemma, it remains to establish the `other' upper bound; that is

\begin{equation} \label{lem_sum_mu_good_result_half2}
\left|\sum_{n=a}^b\mu(E_{q_n})-2\sum_{n=a}^b\psi(q_n)\right|\ll 1.
\end{equation}

\noindent With this in mind,  for any $n\in\N$ let
$$
\Psi(n):=\sum_{k=1}^n\psi(q_k)  \quad {\rm and } \quad \varepsilon_n:=\min\left(1,\Psi(n)^{-2}\right).
$$
By definition,  $|\psi(q_n)|\leq 1$  and so
$
\varepsilon_n^{-1}\leq n^2 \, .
$
Hence, with reference to the second term on the r.h.s. of \eqref{lem_sum_mu_good_ie2sv} we have that
\begin{equation} \label{lem_sum_mu_good_second_sum_bounded}
\sum_{n=a}^b\frac{3}{n^{A/B}\varepsilon_n^{1/2}} \, \leq  \,  3\cdot\sum_{n=1}^\infty\frac{1}{n^{\frac{A}{B}-1}} \, <  \, \infty  \, .
\end{equation}
The last inequality makes use of the fact that $A > 2B$.
We now turn out attention to the first term on the r.h.s. of \eqref{lem_sum_mu_good_ie2sv}. A straight forward application of
Lemma~\ref{cor_sum_S_square} in  Appendix~D of~\S\ref{app} with $s_n=\psi(q_n)$ and $\gamma=1$, yields that
\begin{equation} \label{lem_sum_mu_good_ie3}
\sum_{n=a}^b\psi(q_n)\varepsilon_n \, < \, \sum_{n=1}^{\infty}\frac{\psi(q_n)}{\max\left(1,\Psi(n)^2\right)} \, < \, 3 \, .
\end{equation}
Combining the inequalities~\eqref{lem_sum_mu_good_ie2sv}, \eqref{lem_sum_mu_good_second_sum_bounded} and~\eqref{lem_sum_mu_good_ie3} gives the
desired upper bound  \eqref{lem_sum_mu_good_result_half2}.


\subsection{Proof of Theorem \ref{asmplacthm} modulo Proposition \ref{indiethm1}} \label{subsection_deduction_thm1}

 On using  \eqref{lem_sum_mu_good_result} and \eqref{delta_set_frac23} on the right hand side of~\eqref{integral_square_estimate}, we find that for any $a,b\in\N$  with $a< b$
\begin{equation} \label{integral_square_lacunary_O}
 {\rm l.h.s. \ of \ } \eqref{ebc_condition1} \, =  \, O\left(\left(\sum_{n=a}^{b}\psi(q_n)\right)^{4/3}
 \left(\log^+\left(\sum_{n=a}^{b}\psi(q_n)\right)+1\right)+
 \sum_{n=a}^{b}\psi(q_n)\right)  \, .
\end{equation}
We now estimate the above term on the right and show that
\begin{equation} \label{sum_phi_is_big}
\left(\sum_{n=a}^{b}\psi(q_n)\right)^{4/3}
\left(\log^+\left(\sum_{n=a}^{b}\psi(q_n)\right)+1\right)+
\sum_{n=a}^{b}\psi(q_n) \ \leq \  6\sum_{n=a}^b\phi_n  \, ,
\end{equation}

\noindent where
\begin{equation} \label{def_varphi}
\phi_n : = \psi(q_n)\Psi(n)^{1/3}\left(\log^+\Psi(n)+1\right)+2\psi(q_n)  \, .
\end{equation}

\noindent To this end,
denote by $m \in \N $ the smallest integer satisfying  $a\leq m\leq b$ such that
\begin{equation} \label{ie1_c}
\sum_{n=a}^{m}\psi(q_n)  \, \geq   \, \frac{1}{2}\sum_{n=a}^{b}\psi(q_n).
\end{equation}
Note that by the definition of $m$, we have that
\begin{equation} \label{ie2_c}
\sum_{n=m}^{b}\psi(q_n)  \, \geq  \, \frac{1}{2}\sum_{n=a}^{b}\psi(q_n)
\end{equation}
and that for any integer $n$ such that $m\leq n\leq b$
\begin{equation} \label{thm1_Psi_is_large}
2\Psi(n) \, \geq  \,  \sum_{k=a}^b\psi(q_k).
\end{equation}
Then, by using~\eqref{ie2_c} and then~\eqref{thm1_Psi_is_large}, it is easily verified that
$$
\begin{aligned}
\left(\sum_{n=a}^{b}\psi(q_n)\right)^{4/3}&\left(\log^+\left(\sum_{n=a}^{b}\psi(q_n)\right)+1\right)+\sum_{n=a}^{b}\psi(q_n)
\\[2ex]
&\leq \
2\sum_{n=m}^{b}\left(\psi(q_n)\left(\sum_{k=a}^{b}\psi(q_k)\right)^{1/3}\left(\log^+\left(\sum_{k=a}^{b}\psi(q_k)\right)+1\right)\right)+\sum_{n=a}^{b}\psi(q_n)
\\[2ex]
&\leq \
2\sum_{n=m}^{b}\left(\psi(q_n)\left(2\Psi(n)\right)^{1/3}\left(\log^+\left(2\Psi(n)\right)+1\right)\right)+\sum_{n=a}^{b}\psi(q_n)
\\[2ex]
&\leq  \
4\cdot 2^{1/3}\cdot\sum_{n=m}^{b}\left(\psi(q_n)\Psi(n)^{1/3}\left(\log^+\left(\Psi(n)\right)+1\right)\right)+\sum_{n=a}^{b}\psi(q_n)
\\[2ex]
&\leq \
6\sum_{n=a}^{b}\left(\psi(q_n)\Psi(n)^{1/3}\left(\log^+\left(\Psi(n)\right)+1\right)\right)+\sum_{n=a}^{b}\psi(q_n) \, .
\end{aligned}
$$
This establishes~\eqref{sum_phi_is_big} which together with \eqref{integral_square_lacunary_O} implies that condition
\eqref{ebc_condition1} of Lemma~\ref{ebc} is satisfied with $X, f_n(x)$ and
 $ f_n$ given by~\eqref{Harman_choice_parameters} and $\phi_n$ by  \eqref{def_varphi}.   Also note that for any $n \in \N$, we trivially have that $f_n \leq \phi_n$, $f_n \leq 2 $ and
 \begin{eqnarray*}
\Phi(N):=\sum_{n=1}^N\phi_n  &\leq&  \sum_{n=1}^N\psi(q_n)\Psi(N)^{1/3}\left(\log^+\Psi(N)+1\right)+\sum_{n=1}^N\psi(q_n) \\[2ex] & =  & \Psi(N)^{4/3}\left(\log^+\Psi(N)+1\right)+\Psi(N).
\end{eqnarray*}
Hence, Lemma~\ref{ebc} implies that for any $\varepsilon>0$
\begin{eqnarray*}
R(x,N) &=& {\rm l.h.s. \ of \ } \eqref{ebc_conclusion}  \\[2ex]
&=&2\Psi(N)+O\left(\Psi(N)^{2/3}\left(\log\Psi(N)+2\right)^{2+\varepsilon}\right),
\end{eqnarray*}
and this completes the proof of Theorem~\ref{asmplacthm} assuming the truth of Proposition~\ref{indiethm1}.

\subsection{Proof of Theorem \ref{mainSV} modulo Proposition \ref{indiethm4}}

The proof is similar to that  in the previous subsection.  On using  \eqref{lem_sum_mu_good_result} and~\eqref{overlapfinal_finite_set_of_primesSV} on the right hand side of~\eqref{integral_square_estimate},  we find that for any $a,b\in\N$  with $a< b$
\begin{equation}
\begin{aligned}\label{integral_square_main_O}
 {\rm l.h.s. \ of \ } \eqref{ebc_condition1} \, =  \, O\Bigg( \ \Big(\sum_{n=a}^b\psi(q_n)\Big) &\Big(\log^+\Big(\sum_{n=a}^b\psi(q_n)\Big)+1\Big)
\\[1ex]
&+  \ \mathop{\sum\sum}_{a\leq m<n\leq b} (q_m,q_n)\min\left(\frac{\psi(q_m)}{q_m},\frac{\psi(q_n)}{q_n}\right)  \ \Bigg)
\end{aligned}
\end{equation}
We estimate the above term on the right and show that
\begin{equation}
\begin{aligned}\label{integral_square_main_Osv}
 \left(\sum_{n=a}^b\psi(q_n)\right)& \left(\log^+\left(\sum_{n=a}^b\psi(q_n)\right)
 +1\right)
\\[1ex]
&+ \ \mathop{\sum\sum}_{a\leq m<n\leq b} (q_m,q_n)\min\left(\frac{\psi(q_m)}{q_m},\frac{\psi(q_n)}{q_n}\right) \ \leq \  2 \sum_{n=a}^b\phi_n  \, ,
\end{aligned}
\end{equation}
\noindent where
\begin{equation} \label{def_varphi_main}
\phi_n:= \psi(q_n)\left(\log^+\Psi(n)+2\right)
+\sum_{m=1}^{n-1}(q_m,q_n)\min\left(\frac{\psi(q_m)}{q_m},\frac{\psi(q_n)}{q_n}\right) \, .
\end{equation}

\noindent Clearly, this will immediately follow on showing that
\begin{equation} \label{ub_thm4_technical}
\left(\sum_{n=a}^b\psi(q_n)\right)\left(\log^+\left(\sum_{n=a}^b\psi(q_n)\right)
+1\right)  \ \leq \ 2 \sum_{n=a}^b\psi(q_n)\left(\log^+\Psi(n)+2\right).
\end{equation}
\noindent  As in the previous subsection, let $m  \in \N $ be the smallest integer satisfying  $a\leq m\leq b$ such that \eqref{ie1_c} holds.  Then, by using~\eqref{ie2_c} and~\eqref{thm1_Psi_is_large}, it is easily verified that
$$
\begin{aligned}
\left(\sum_{n=a}^b\psi(q_n)\right)\left(\log^+\left(\sum_{n=a}^b\psi(q_n)\right)+1\right)
&\leq  \
2\left(\sum_{n=m}^b\psi(q_n)\right)\left(\log^+\left(\sum_{n=a}^b\psi(q_n)\right)+1\right)
\\
&\leq \
2\left(\sum_{n=m}^b\psi(q_n)\left(\log^+\left(2\Psi(n)\right)+1\right)\right)
\\
&\leq \
2\left(\sum_{n=m}^b\psi(q_n)\left(\log^+\Psi(n)+2\right)\right)
\\
&\leq  \
2\left(\sum_{n=a}^b\psi(q_n)\left(\log^+\Psi(n)+2\right)\right)  \, .
\end{aligned}
$$
This establishes~\eqref{ub_thm4_technical} and so \eqref{integral_square_main_Osv} follows.  The upshot is that \eqref{integral_square_main_Osv} together with \eqref{integral_square_main_O} implies that condition
\eqref{ebc_condition1} of Lemma~\ref{ebc} is satisfied with $X, f_n(x)$ and
 $ f_n$ given by~\eqref{Harman_choice_parameters} and $\phi_n$ by  \eqref{def_varphi_main}.   Also note that for any $n \in \N$, we trivially have that $f_n \leq \phi_n$, $f_n \leq 2 $ and
 \begin{equation*}
\Phi(N):=\sum_{n=1}^N\phi_n  \, \leq  \,  \Psi(N)\left(\log^+\Psi(N)+2\right) \, + \, E(N) \, ,
\end{equation*}
where $ E(N) $ is given by \eqref{error2sv}.
 The desired counting statement \eqref{countFSPresultsv} now follows on applying  Lemma~\ref{ebc}. Hence, this  completes the proof of Theorem~\ref{mainSV} assuming the truth of Proposition~\ref{indiethm4}.


\section{Preliminaries for `independence'  \label{secpresv} }
In this section we set out the ground work  for establishing the desired estimates for the measure of the intersection of the sets  $E_{q_n}^{\gamma} $; namely Propositions~\ref{indiethm1} \& \ref{indiethm4}.  Recall, that these estimates are at the heart of proving our main counting results; namely Theorems \ref{asmplacthm} \& \ref{mainSV}.

  Throughout this section  $(\varepsilon_n)_{n\in \N}$  will be a fixed sequence of real numbers in $(0,1]$.  Also, with reference to \eqref{Wdef}, for any $m, n \in \N$  with $ m < n$ let
\begin{equation} \label{wmndef}
W_{m,n}^{+}:= W_{q_m,\gamma,\varepsilon_m}^+W_{q_n,\gamma,\varepsilon_n}^+\, .
\end{equation}
\noindent Then, by definition
\begin{eqnarray*}
\mu(E_{q_m}^{\gamma}\cap E_{q_n}^{\gamma}) &\leq& \int_{0}^{1} W_{q_m,\gamma,\varepsilon_m}^+(x) W_{q_n,\gamma,\varepsilon_n}^+(x) \, \mathrm{d}\mu(x)   \nonumber
 \\ [2ex]
&=& \int_{0}^{1}W_{m,n}^{+}(x)\mathrm{d}\mu(x)
\end{eqnarray*}

\noindent Our aim is to obtain a sufficiently strong upper bound for the above integral. This we do by considering the Fourier series expansion of the function $ W_{m,n}^{+}$.  It is easily verified that for any $k \in \Z$,
\begin{eqnarray}  \label{greek}
\what{W}_{m,n}^{+}(k) & := &   \, \int_{0}^{1}
W_{q_m,\gamma,\varepsilon_m}^{+}(x)W_{q_n,\gamma,\varepsilon_n}^{+}(x) \, \exp(-2\pi kix) \mathrm{d}x    \nonumber \\[2ex]
 & = &  \sum_{j \in \Z} \what{W}_{q_m,\gamma,\varepsilon_m}^{+}(k) \
\what{W}_{q_n,\gamma,\varepsilon_n}^{+}(k-j)  \, .
\end{eqnarray}
Moreover, it is easily seen  that $\sum\limits_{k \in \Z} \big| \widehat{W}_{m,n}^{+}(k) \big|  < \infty $, so the Fourier series
$$ \sum_{k \in \Z }\widehat{W}_{m,n}^{+}(k)  \exp(2\pi kix)
$$
converges uniformly to $W_{m,n}^{+}(x)$ for all $ x \in \I$.
Hence, it  follows that
\begin{eqnarray*}
\int_0^1 W_{m,n}^{+}(x)(x)  \; \mathrm{d}\mu(x) \; =  \;  \sum_{k\in \Z} \, \widehat{W}_{m,n}^{+}(k)  \;  \widehat{\mu}(-k)   \, .
\end{eqnarray*}

\noindent    The upshot of this is that

\begin{eqnarray}
\mu(E_{q_m}^{\gamma}\cap E_{q_n}^{\gamma}) &\leq&
 \sum_{k \in \Z }\what{W}_{m,n}^{+}(k)\what{\mu}(-k) \nonumber
\\ [2ex]
&=& \what{W}_{m,n}^{+}(0) + \sum_{k\in \Z \setminus \{0\}} \what{W}_{m,n}^{+}(k)\what{\mu}(-k).  \label{mit1}  \vspace{2mm}
\end{eqnarray}

\noindent
To proceed, we consider the two terms on the right hand side  of \eqref{mit1} separately. By definition, for any pair of natural numbers $m, n$ we have that
\begin{equation} \label{ub_W0_lacunary}
\begin{aligned}
\what{W}_{m,n}^{+}(0) &:=   \, \int_{0}^{1}W_{q_m,\gamma,\varepsilon_m}^{+}(x)W_{q_n,\gamma,\varepsilon_n}^{+}(x)\mathrm{d}x \\[2ex]
&\leq  \, |(1+\varepsilon_m)E_{q_m}^{\gamma}\cap (1+\varepsilon_n)E_{q_n}^{\gamma}|\, ,  \vspace{2mm}
\end{aligned}
\end{equation}
where $ | \, . \, | $ is Lebesgue measure and  with reference
to \eqref{def_En}, for any constant $\kappa>0$ we let $\kappa E_{q}^{\gamma}:=  E_{q}^{\gamma}(\kappa \psi) $.
%
%
%
It is relatively straightforward to verify (see for example~\cite[Equation~3.2.5]{harman}\footnote{Equation~3.2.5 in \cite{harman} as stated is not correct   -- the `big O' error term is missing.} for the details) that for   any  $q, q' \in \N$
\begin{equation*}
|E_{q}^{\gamma}\cap E_{q'}^{\gamma}| \, =  \, 4\psi(q)\psi(q')+ O\left( (q,q') \ \min\Big(\frac{\psi(q)}{q},\frac{\psi(q')}{q'}\Big)  \right) \, .
\end{equation*}


\noindent Hence, it follows that
\begin{multline*} \label{main_q_Harman_ub}
\Big|(1+\varepsilon_m) E_{q_m}^{\gamma}\cap (1+\varepsilon_n) E_{q_n}^{\gamma} \Big| \,  = \, 4(1+\varepsilon_m)(1+\varepsilon_n)\psi(q_m)\psi(q_n)    \\[1ex]   + \ \  O\left( (q_m,q_n)\min\Big(\frac{\psi(q_m)}{q_m},\frac{\psi(q_n)}{q_n}\Big)  \right) \, .
\end{multline*}

\noindent  This together with~\eqref{ub_W0_lacunary}  implies that
\begin{equation}  \label{main_q_Harman_ub2}
 \what{W}_{m,n}^{+}(0)  \, \leq  \,  4(1+\varepsilon_m)(1+\varepsilon_n)\psi(q_m)\psi(q_n)  +  O\left( (q_m,q_n)\min\Big(\frac{\psi(q_m)}{q_m},\frac{\psi(q_n)}{q_n}\Big)  \right) \, .
\end{equation}

\noindent We now turn our attention to the  second term appearing on the right hand side of \eqref{mit1} which for convenience  we will  denote by $S_{m,n}$.  Note that in view of \eqref{fcoef} and \eqref{greek}, it follows that
\begin{eqnarray} \label{def_S_m_n}
S_{m,n} &:= & \sum_{k \in\mathbb{Z}\setminus \{0\} } \what{W}_{m,n}^{+}(k)\what{\mu}(-k) \nonumber \\[2ex]
&= & \mathop{ \mathop{\sum\sum}_{ s,t\in\mathbb{Z} } }_{ sq_m-tq_n \neq 0}
\what{W}_{q_m,\gamma,\varepsilon_m}^{+}(sq_m)
\what{W}_{q_n,\gamma,\varepsilon_n}^{+}(tq_n)
\what{\mu}\left(-(sq_m+tq_n)\right).
\end{eqnarray}

\subsection{Estimates for $S_{m,n}$} \label{s_Smn}

We start by providing a general upper bound estimate  for $S_{m,n}$ which is applicable to integer sequences  associated with both  Propositions~\ref{indiethm1} \& \ref{indiethm4}.

\begin{lem} \label{prop_sum_S_m_n}
Let $\mu$ be a probability  measure supported on a subset $F$ of
$ \ \I \, $.  Let $\ca{A}= (q_n)_{n\in \N} $ be an increasing sequence of natural numbers that satisfies the growth condition  \eqref{hyp_q} for some constants  $B \ge  1$  and $C > 0$.  Furthermore, assume that  $q_1 > 4 $.
Let $(\varepsilon_n)_{n\in \N}$  be a sequence of real numbers in $(0,1]$, $\alpha \in (0,1)$,  $\gamma\in\I  $ and $\psi:\mathbb{N}\rightarrow \I$ be a real, positive function.  Suppose there exists a constant $
A  \, >  \,  2  B $ so that \eqref{decaylac} is satisfied. Then, for any  $m,n \in \N$ with $m <  n$, we have
$$ |S_{m,n} |\ll   \   \frac{\psi(q_m)}{n^{\frac{A}{B}}\varepsilon_n^{1/2}}\, +\, \left( 1+ \frac{1}{\alpha^{A}} \right)
\frac{\psi(q_n)}{m^{\frac{A}{B}}\varepsilon_m^{1/2}}
\, + \, \frac{1}{n^{\frac{A}{B}}\varepsilon_m^{1/2}\varepsilon_n^{1/2}}   \,  +  \ |T(m,n)|, $$
where
\begin{equation} \label{def_S_prime}
 T(m,n) \  :=  \! \mathop{ \mathop{\sum\sum}_{ s,t\in\mathbb{Z}\setminus \{0\} } }_{1\leq |sq_m-tq_n|<q_m^{\alpha}} \!\!\! \what{W}_{q_m,\gamma,\varepsilon_m}^{+}(sq_m)\what{W}_{q_n,\gamma,\varepsilon_n}^{+}(tq_n)  \what{\mu}\left( sq_m-tq_n\right).
\end{equation}
\end{lem}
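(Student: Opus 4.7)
My approach is to split the double sum defining $S_{m,n}$ according to whether $s=0$, $t=0$ or both are non-zero, and on the doubly non-zero piece to split further by the size of $|sq_m-tq_n|$. The two workhorses are Lemma~\ref{main_part_bounded}, which gives $\sum_{s\in\Z}|\what W_{q,\gamma,\varepsilon}^{+}(sq)|\leq 3\varepsilon^{-1/2}$, and the Fourier-decay hypothesis \eqref{decaylac} combined with the growth condition \eqref{hyp_q}: these yield $|\what\mu(k)|\ll(\log|k|)^{-A}$, so that in particular $|\what\mu(k)|\ll m^{-A/B}$ whenever $|k|\geq q_m$ and $|\what\mu(k)|\ll n^{-A/B}$ whenever $|k|\geq q_n^{1/2}$.

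Handling the boundary terms is straightforward. When $t=0$ and $s\neq 0$, I pull out $\what W_{q_n,\gamma,\varepsilon_n}^{+}(0)=(2+\varepsilon_n)\psi(q_n)\leq 3\psi(q_n)$ and combine Lemma~\ref{main_part_bounded} applied to the $s$-sum with $|\what\mu(sq_m)|\ll m^{-A/B}$; this produces the $\psi(q_n)/(m^{A/B}\varepsilon_m^{1/2})$ part of the second term. The symmetric choice $s=0$, $t\neq 0$ yields the first term $\psi(q_m)/(n^{A/B}\varepsilon_n^{1/2})$. For the remaining doubly non-zero sum I peel off the contribution from $0<|sq_m-tq_n|<q_m^{\alpha}$, which is exactly $T(m,n)$, and split the rest into a \emph{medium} range $q_m^{\alpha}\leq|sq_m-tq_n|<q_n^{1/2}$ and a \emph{large} range $|sq_m-tq_n|\geq q_n^{1/2}$. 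In the large range I use $|\what\mu(sq_m-tq_n)|\ll n^{-A/B}$ and Lemma~\ref{main_part_bounded} twice to obtain a contribution $\ll 1/(n^{A/B}\varepsilon_m^{1/2}\varepsilon_n^{1/2})$, matching the third term of the lemma.

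The crux, producing the $\alpha^{-A}$ factor, is the medium range. There $|\what\mu(sq_m-tq_n)|\ll\alpha^{-A}(\log q_m)^{-A}\ll\alpha^{-A}m^{-A/B}$. I bound $|\what W_{q_n,\gamma,\varepsilon_n}^{+}(tq_n)|\leq 3\psi(q_n)$ via \eqref{W_ub_psi}, and then exploit the geometric observation that, for each fixed $s$, the set of integers $t$ with $|sq_m-tq_n|<q_n^{1/2}$ lies in an interval of length $2q_n^{-1/2}<1$ (using $q_n\geq q_1>4$) and therefore contains at most one integer. This reduces the $t$-sum to an $O(1)$ factor per $s$, after which the $s$-sum once more yields $\ll\varepsilon_m^{-1/2}$ via Lemma~\ref{main_part_bounded}, delivering the required $\alpha^{-A}\psi(q_n)/(m^{A/B}\varepsilon_m^{1/2})$. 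Adding the four contributions completes the proof. The main obstacle is precisely this medium-range estimate: a naive use of \eqref{W_ub_1s2} on the $t$-sum would introduce a spurious $1/\psi(q_n)$ factor, so one must simultaneously invoke the $\psi(q_n)$-bound \eqref{W_ub_psi} and the integer-counting argument in order to keep $\psi(q_n)$ in the numerator rather than the denominator.
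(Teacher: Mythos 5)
Your proposal is correct and follows essentially the same route as the paper. The paper decomposes $S_{m,n}$ into the same boundary pieces ($s=0$ with $t\neq 0$, and $t=0$ with $s\neq 0$), then splits the doubly non-zero sum into a small range (which is $T(m,n)$), a medium range and a large range, and handles the medium range via the identical observation that for each fixed $s$ the number of admissible $t$ is at most one, so that $|\what{W}^{+}_{q_n,\gamma,\varepsilon_n}(tq_n)|\le 3\psi(q_n)$ may be used on the unique $t$ while Lemma~\ref{main_part_bounded} handles the $s$-sum. The only difference is cosmetic: you take the medium/large cut at $q_n^{1/2}$ whereas the paper takes it at $q_n/2$; both choices make the interval $\{t:|s\,q_m/q_n - t|<\delta\}$ contain at most one integer and both give $|\what\mu(sq_m-tq_n)|\ll n^{-A/B}$ on the large range, so the argument is unaffected.
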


The next two lemmas provide estimates on the average size of the quantity $T(m,n)$  appearing in Lemma \ref{prop_sum_S_m_n}.  The first deals with  lacunary sequences (i.e. the context of Proposition~\ref{indiethm1}) and the second deals with  $\alpha$-separated sequences  (i.e.  the context of Proposition~\ref{indiethm4}).

\begin{lem}\label{prop_sum_S_m_n_lacunary}

Let $\mu$ be a probability  measure supported on a subset $F$ of
$ \ \I \, $.  Let $\ca{A}= (q_n)_{n\in \N} $ be a lacunary sequence of natural numbers.
Let $(\varepsilon_n)_{n\in \N}$  be a decreasing sequence of real numbers in $(0,1]$, $\alpha \in (0,1)$,  $\gamma\in\I  $ and $\psi:\mathbb{N}\rightarrow \I$ be a real, positive function.
 Then, for arbitrary   $a,b \in \N$ with $a <  b$, we have
$$ \mathop{\sum\sum}_{a\leq m < n\leq b} |T(m,n)|  \ \ll  \  \sum_{n=a}^b\frac{\psi(q_n)}{\varepsilon_n^{1/2}}  \, , $$
where $T(m,n)$ is defined by~\eqref{def_S_prime}.
\end{lem}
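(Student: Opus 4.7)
The plan is first to reduce the sum defining $T(m,n)$ to pairs with $s,t \geq 1$. The constraint $1 \leq |sq_m - tq_n| < q_m^{\alpha} < q_m$ excludes pairs of opposite signs, since otherwise $|sq_m - tq_n| = |s|q_m + |t|q_n \geq q_m + q_n$. Using the symmetry $(s,t) \mapsto (-s,-t)$ in the summand (which leaves $|\what{W}_{q_m,\gamma,\varepsilon_m}^{+}(sq_m)\what{W}_{q_n,\gamma,\varepsilon_n}^{+}(tq_n)\what{\mu}(sq_m-tq_n)|$ invariant), I restrict to $s,t \geq 1$ at the cost of a factor $2$. For each fixed $t \geq 1$, the constraint forces $s$ to lie within distance $q_m^{\alpha-1} \leq 1$ of $tq_n/q_m$, yielding at most two admissible values of $s$. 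Crucially, lacunarity gives $s \geq tq_n/q_m - 1 \geq tK^{n-m} - 1$.

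Next I bound $|T(m,n)|$. Using $|\what{\mu}| \leq 1$ together with the pointwise estimates $|\what{W}_{q,\gamma,\varepsilon}^{+}(sq)| \leq \min\bigl(3\psi(q),\, 1/(\pi^{2}s^{2}\psi(q)\varepsilon)\bigr)$ from \eqref{W_ub_psi}--\eqref{W_ub_1s2}, and the crude bound $|\what{W}_{q_n,\gamma,\varepsilon_n}^{+}(tq_n)| \leq 3\psi(q_n)$, I split the $t$-sum at the threshold $T_{0} := C/(K^{n-m}\psi(q_m)\varepsilon_m^{1/2})$. In the regime $tK^{n-m} \geq 2$, so that $s \geq tK^{n-m}/2$, the range $t \leq T_{0}$ uses the first piece $|\what{W}^{+}(sq_m)| \leq 3\psi(q_m)$ and contributes $\ll \psi(q_m)\psi(q_n) T_{0} \asymp \psi(q_n)/(K^{n-m}\varepsilon_m^{1/2})$; the range $t > T_{0}$ uses the second piece, which via $s \geq tK^{n-m}/2$ gives $|\what{W}^{+}(sq_m)| \leq C'/(t^{2}K^{2(n-m)}\psi(q_m)\varepsilon_m)$, and combined with $\sum_{t > T_{0}} t^{-2} \leq 1/T_{0}$ produces the same order. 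The boundary regime $tK^{n-m} < 2$ forces $t=1$ and $n-m < \lceil \log 2/\log K\rceil$; here I bound $|\what{W}^{+}|$ crudely by $3\psi$ and use that there are $O(1)$ admissible pairs $(s,t)$, giving a contribution of $O(\psi(q_m)\psi(q_n))$ for each of the $O(1)$ exceptional $m$'s per $n$.

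To conclude, I sum over $m < n$ and exploit that $(\varepsilon_n)$ is decreasing, so $\varepsilon_m \geq \varepsilon_n$ for $m \leq n$:
\[
\sum_{m=a}^{n-1}|T(m,n)| \, \ll \, \frac{\psi(q_n)}{\varepsilon_n^{1/2}}\sum_{m=a}^{n-1}K^{-(n-m)} \, + \, O(\psi(q_n)) \, \ll \, \frac{\psi(q_n)}{\varepsilon_n^{1/2}} \, ,
\]
where the $O(\psi(q_n))$ boundary contribution is absorbed using $\varepsilon_n \leq 1$. Summing over $n \in [a,b]$ delivers the claimed bound. The main obstacle I anticipate is the precise calibration of $T_{0}$ so that the two halves of the $t$-split balance and the unwanted $\psi(q_m)^{-1}$ factors cancel cleanly; the small-$(n-m)$ boundary is routine but must be tracked explicitly to stay within the stated estimate.
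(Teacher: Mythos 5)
Your proof is correct and follows essentially the same route as the paper's: reduce to $s,t\geq 1$, observe that for each $t$ there are at most two admissible $s$'s with $s\asymp (q_n/q_m)t$, split the $t$-sum at the threshold $\asymp 1/(\tfrac{q_n}{q_m}\psi(q_m)\varepsilon_m^{1/2})$ using the two bounds \eqref{W_ub_psi} and \eqref{W_ub_1s2}, and then use lacunarity and the monotonicity of $(\varepsilon_n)$ to sum over $m<n$. The only cosmetic difference is that you replace $q_n/q_m$ by the weaker $K^{n-m}$ early on, which forces the separate "boundary regime'' $tK^{n-m}<2$; the paper sidesteps this by noting that $s\geq 1$ gives $s\geq \tfrac12(q_n/q_m)t$ unconditionally, so no exceptional case arises.
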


\begin{lem} \label{prop_sum_S_m_n_separation}
Let $\mu$ be a probability  measure supported on a subset $F$ of
$ \ \I \, $.  Let $\ca{A}= (q_n)_{n\in \N} $ be an $\alpha$-separated increasing sequence of natural numbers with the implicit  constant $m_0 = 1$.
Let $(\varepsilon_n)_{n\in \N}$  be a sequence of real numbers in $(0,1]$, $\gamma\in\I  $ and $\psi:\mathbb{N}\rightarrow \I$ be a real, positive function.  Suppose that for any $n \in \N$
\begin{equation} \label{psi_lb}
\psi(q_n) \geq n^{-9} \,  \end{equation}
and
\begin{equation} \label{psi_lbsv} \varepsilon_n^{-1} \leq 2n \, .
\end{equation}
 Then, for arbitrary   $a,b \in \N$ with $a <  b$, we have
\begin{equation} \label{Tmnprop5}
\mathop{\sum\sum}_{a\leq m < n\leq b} |T(m,n)| \ll \sum_{n=a}^b\psi(q_n),
\end{equation}
where $T(m,n)$ is defined by~\eqref{def_S_prime}.
\end{lem}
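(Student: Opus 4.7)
The core of the argument is to exploit the $\alpha$-separation hypothesis to force the summation in $T(m,n)$ to range only over indices $|s| > m^{12}$, where the bound \eqref{W_ub_1s2} on the Fourier coefficient $\what{W}^{+}_{q_m,\gamma,\varepsilon_m}(sq_m)$ is very strong. I would first reduce to $s,t \in \N$ by symmetry: the summands for $(s,t)$ and $(-s,-t)$ are complex conjugates (since $\what{W}^{+}(-k) = \overline{\what{W}^{+}(k)}$ and $\what{\mu}(-k) = \overline{\what{\mu}(k)}$), while the mixed-sign cases $(s>0,t<0)$ and $(s<0,t>0)$ cannot arise in the sum: there $|sq_m-tq_n| = |s|q_m + |t|q_n \geq q_m+q_n > q_m \geq q_m^{\alpha}$, violating the constraint in \eqref{def_S_prime}. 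So $|T(m,n)| \leq 2|T_{+}(m,n)|$ where $T_{+}(m,n)$ is restricted to $s,t \in \N$. By $\alpha$-separation with $m_0 = 1$, every such $s$ satisfies $s > m^{12}$.

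Next, for each such $s$, I would bound the number of admissible $t \in \N$. The condition $1 \leq |sq_m - tq_n| < q_m^{\alpha}$ places $t$ in an interval of length $2q_m^{\alpha}/q_n$, which is strictly less than $2$ because $q_n > q_m$ and $\alpha < 1$; hence there are at most two such $t$. Applying the Fourier coefficient bounds from Section~2 — namely the crude estimate $|\what{W}^{+}_{q_n,\gamma,\varepsilon_n}(tq_n)| \leq 3\psi(q_n)$ from \eqref{W_ub_psi}, the trivial bound $|\what{\mu}| \leq 1$, and the sharper estimate $|\what{W}^{+}_{q_m,\gamma,\varepsilon_m}(sq_m)| \leq (\pi^2 s^2 \psi(q_m) \varepsilon_m)^{-1}$ from \eqref{W_ub_1s2} — I obtain
\[
|T(m,n)| \ \ll \ \psi(q_n) \sum_{s > m^{12}} \frac{1}{s^2\,\psi(q_m)\,\varepsilon_m} \ \ll \ \frac{\psi(q_n)}{m^{12}\,\psi(q_m)\,\varepsilon_m}.
\]
Invoking the two hypotheses \eqref{psi_lb} and \eqref{psi_lbsv}, that is, $\psi(q_m) \geq m^{-9}$ and $\varepsilon_m^{-1} \leq 2m$, this right-hand side is $O(\psi(q_n)/m^{2})$, so
\[
\mathop{\sum\sum}_{a \leq m < n \leq b} |T(m,n)| \ \ll \ \sum_{n=a}^{b} \psi(q_n) \sum_{m=1}^{n-1} \frac{1}{m^{2}} \ \ll \ \sum_{n=a}^{b} \psi(q_n),
\]
which is precisely \eqref{Tmnprop5}.

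The crux of the argument — and where a weaker separation condition would fail — is the fine calibration of the exponent $12$ appearing in the definition of $\alpha$-separation: it is chosen to absorb the loss $m^{9}$ coming from $\psi(q_m)^{-1}$ and the loss $m$ from $\varepsilon_m^{-1}$, while still leaving a summable tail $m^{-2}$. Without $\alpha$-separation one could not exclude small-$s$ contributions to $T(m,n)$, where $|\what{W}^{+}_{q_m,\gamma,\varepsilon_m}(sq_m)|$ can be as large as $\psi(q_m)$, and no estimate of the form \eqref{Tmnprop5} could be extracted just from Fourier decay of $\mu$ and the growth of $q_n$. The rest of the proof is a straightforward bookkeeping of constants and does not pose any serious difficulty.
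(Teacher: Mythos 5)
Your proof is correct and takes essentially the same route as the paper: both rely on the $\alpha$-separation hypothesis (with $m_0=1$) to restrict to $s>m^{12}$, observe that there are at most two admissible $t$ per $s$, apply~\eqref{W_ub_psi},~\eqref{W_ub_1s2} and $|\what{\mu}|\leq 1$, sum the tail $\sum_{s>m^{12}}s^{-2}$, and then absorb $\psi(q_m)^{-1}\leq m^9$ and $\varepsilon_m^{-1}\leq 2m$ to land at $\psi(q_n)/m^2$. The paper phrases this as a decomposition $T=T_1+T_2$ with split point $m^3/\psi(q_m)$ and shows $T_1\equiv 0$, but since $m^3/\psi(q_m)\leq m^{12}$ under~\eqref{psi_lb} this is cosmetic: the two hypotheses~\eqref{psi_lb} and~\eqref{psi_lbsv} are used in precisely the same roles, just at slightly different moments in the calculation.
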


The rest of this section will be devoted to proving the  above three lemmas.

\begin{proof}[Proof of Lemma~\ref{prop_sum_S_m_n}]
We start by decomposing $S_{m,n }$ into three sums:

$$ S_{m,n}= S_1(m,n)+S_2(m,n)+S_3(m,n), \vspace{2mm} $$
where \vspace{2mm}
\begin{eqnarray*}
S_1(m,n) &:=&  \sum_{t\in \Z \setminus \{0\}}   \what{W}_{q_m,\gamma,\varepsilon_m}^+(0) \what{W}_{q_n,\gamma,\varepsilon_n}^{+}(tq_n)\what{\mu}(-tq_n), \\[2ex]
S_2(m,n) &:=&  \sum_{s\in \Z \setminus \{0\}}   \what{W}_{q_n,\gamma,\varepsilon_n}^+(0) \what{W}_{q_m,\gamma,\varepsilon_m}^{+}(sq_m)\what{\mu}(-sq_m), \\[2ex]
S_3(m,n) &:=& \mathop{ \mathop{\sum\sum}_{ s,t\in\mathbb{Z}\setminus \{0\} } }_{sq_m+tq_n\neq 0}\what{W}_{q_m,\gamma,\varepsilon_m}^{+}(sq_m)\what{W}_{q_n,\gamma,\varepsilon_n}^{+}(tq_n)\what{\mu}\left(-(sq_m+tq_n)\right).\vspace{2mm}
\end{eqnarray*}

\noindent
In order to find an upper bound for $S_1(m,n)$,  first note that by making use of \eqref{decaylac} and~\eqref{hyp_q} it follows that
$$
|\what{\mu}(-tq_n)|\ll \left(\log|t q_n|\right)^{-A}\ll n^{-\frac{A}{B}} \, .
$$
This together with~\eqref{fcoef_zero} and \eqref{ub_single_sum}
implies that
\begin{equation*}
|S_1(m,n)| \ \ll \  \frac{(2+\varepsilon_m)\psi(q_m)}{n^{\frac{A}{B}}}\sum_{t\in \Z }  \what{W}_{q_n,\gamma,\varepsilon_n}^{+}(tq_n)  \ \ll \
\frac{\psi(q_m)}{n^{\frac{A}{B}}\varepsilon_n^{1/2}}\, .
\end{equation*}

\noindent Similarly, we find that

\begin{equation*}
|S_2(m,n)| \ll \frac{\psi(q_n)}{m^{\frac{A}{B}}\varepsilon_m^{1/2}}\, \cdot
\end{equation*}

\noindent To deal with $S_3(m,n)$, we decompose further into two sums:
$$   S_3(m,n) \, =\, S_4(m,n) + S_5(m,n), $$
where
$$ S_4(m,n) \ := \mathop{ \mathop{\sum\sum}_{ s,t\in\mathbb{Z}\setminus \{0\} } }_{|sq_m-tq_n|\geq q_n/2} \! \what{W}_{q_m,\gamma,\varepsilon_m}^{+}(sq_m)\what{W}_{q_n,\gamma,\varepsilon_n}^{+}(tq_n)   \what{\mu}\left( sq_m-tq_n\right)  $$
and
\begin{equation} \label{def_S_7}
S_5(m,n) \ := \mathop{ \mathop{\sum\sum}_{ s,t\in\mathbb{Z}\setminus \{0\} } }_{1\leq |sq_m-tq_n|<q_n/2}   \!\!\!\! \what{W}_{q_m,\gamma,\varepsilon_m}^{+}(sq_m)\what{W}_{q_n,\gamma,\varepsilon_n}^{+}(tq_n)   \what{\mu}\left( sq_m-tq_n\right) .
\end{equation}

\noindent Regarding $S_4(m,n)$, by making use of \eqref{decaylac}, \eqref{hyp_q} and the restriction $|sq_m-tq_n|\geq q_n/2$ imposed on  $s,t\in\mathbb{Z}\setminus \{0\}$,  it follows that
$$   |\what{\mu}\left( sq_m-tq_n\right)| \ll n^{-\frac{A}{B}}. $$
This together with \eqref{ub_double_sum} implies that
$$ |S_4(m,n)| \ll \frac{1}{n^{\frac{A}{B}}\varepsilon_m^{1/2}\varepsilon_n^{1/2}} \cdot $$
To deal with $S_5(m,n)$, we decompose further into two sums:
$$ S_5(m,n) \, =  \,  S_6(m,n) \, + \, T(m,n) \, , $$
where
$$ S_6(m,n)\, :=\hspace{-5mm} \mathop{\mathop{\sum\sum}_{ s,t\in\mathbb{Z}\setminus \{0\} } }_{q_m^{\alpha}\leq |sq_m-tq_n|<q_n/2}\hspace{-3.8mm}\what{W}_{q_m,\gamma,\varepsilon_m}^{+}(sq_m)\what{W}_{q_n,\gamma,\varepsilon_n}^{+}(tq_n)\what{\mu}\left( sq_m-tq_n\right) \vspace{3mm}$$
 and $T(m,n)$ is defined by~\eqref{def_S_prime}.   Regarding $S_6(m,n)$, we first make use of \eqref{decaylac}, \eqref{hyp_q} and the lower bound  restriction $|sq_m-tq_n|\geq q_m^{\alpha}$ imposed on $  s,t\in\mathbb{Z} \setminus \{0\}  $, to find that
$$   |\what{\mu}\left( sq_m-tq_n\right) |\; \ll  \;  \alpha^{-A} \, m^{-\frac{A}{B}}. $$
Next,  we observe that the upper bound restriction $|sq_m-tq_n|\leq q_n/2$ imposed on the non-zero integers $s,t $ is equivalent to
\begin{equation} \label{index_restriction_first_bound}
\left|s\frac{q_m}{q_n}-t\right| < \frac12 \, .
\end{equation}
It is now easy to see that if  $s$ and $t$ satisfy \eqref{index_restriction_first_bound} then both necessarily  must have the same sign and also that for each fixed  integer $s$ there exists at most one non-zero integer $t=t_s$ satisfying~\eqref{index_restriction_first_bound}. Thus,
\begin{eqnarray*}
|S_6(m,n)|  & \ll &  \frac{1}{\alpha^A \, m^{A/B}} \,
\mathop{\mathop{\sum}_{ s\in\N \, : } }_{t_s {\rm \, exists}} |\what{W}_{q_m,\gamma,\varepsilon_m}^{+}(sq_m)  | \
 | \what{W}_{q_n,\gamma,\varepsilon_n}^{+}(t_sq_n) | \vspace{3mm}
 \end{eqnarray*}
 and on using~\eqref{W_ub_psi} to bound $|\what{W}_{q_n,\gamma,\varepsilon_n}^{+}(t_sq_n)|$ and  \eqref{ub_single_sum} to bound  $ |\what{W}_{q_m,\gamma,\varepsilon_n}^{+}(sq_m)| $, we find  that
$$ |S_6(m,n)|  \, \ll \, \frac{1}{\alpha^A} \, \frac{\psi(q_n)}{m^{\frac{A}{B}}\varepsilon_m^{1/2}} \, . \vspace{3mm}$$
The above upper bounds  for the absolute values of  $S_1(m,n), S_2(m,n),  S_4(m,n) $ and $S_6(m,n) $ together with the fact that
$$
|S_{m,n}|    \ \leq  \  |S_1(m,n)| \, + \, |S_2(m,n)| \, + \,  |S_4(m,n)| \, + \,     |S_6(m,n) |\, + \, |T(m,n)|  \, ,
$$
completes the proof of the proposition.
\end{proof}

\vspace*{1ex}

\begin{proof}[Proof of Lemma~\ref{prop_sum_S_m_n_lacunary}] The strategy is similar to that used above to estimate  $S_6(m,n)$.   To start with, observe that the restriction $|sq_m-tq_n|\leq q_m^{\alpha}$ imposed on the non-zero integers $s,t $ associated with $T(m,n)$  implies that
\begin{equation} \label{index_restriction_first_boundsv}
\left|s-t\frac{q_n}{q_m}\right| < 1 \, .
\end{equation}
Hence,  if  $s$ and $t$ satisfy \eqref{index_restriction_first_boundsv} then both necessarily  must have the same sign and also for each fixed integer $t$ there exists a set $S_t$ of at most two non-zero integers  $s$  satisfying \eqref{index_restriction_first_boundsv}.
Thus, on using the trivial bound  $|\what{\mu}(t)| \leq 1 $, it follows that
$$
|T(m,n)|   \ \leq  \ \mathop{\mathop{\sum}_{ t\in\N \, : } }_{s \in S_t} |\what{W}_{q_m,\gamma,\varepsilon_m}^{+}(s q_m) | \
| \what{W}_{q_n,\gamma,\varepsilon_n}^{+}(t q_n) |  \, . \vspace{1mm}
$$
Also note that \eqref{index_restriction_first_boundsv} and the fact that $t q_n/q_m > 1$ implies
\begin{equation*}
\frac12 \frac{q_m }{ q_n}  s  \leq  \frac{q_m}{q_n} \max \{1,  (s-1)\}  \, \leq \, t\, \leq \frac{q_m}{q_n} (s+1)\leq 2\frac{q_m}{q_n} s.
\end{equation*}
On using this together with \eqref{W_ub_psi} to bound $\what{W}_{q_n,\gamma,\varepsilon_n}^{+}(t q_n)$ and both \eqref{W_ub_psi} and \eqref{W_ub_1s2} to bound  $ \what{W}_{q_m,\gamma,\varepsilon_n}^{+}(s_tq_m) $,  we find  that for any integers $ 1 \leq m < n $
\begin{eqnarray*}
|T(m,n) | &\ll  & \sum_{t  \in \N }
  \ \min\left(\frac{q_m^2}{q_n^2}\cdot\frac{1}{t^2\psi(q_m)\varepsilon_m},\psi(q_m)\right)\psi(q_n)\\[2ex]
&\ll  &
\sum_{ 1 \, \leq \, t  \, \leq \frac{q_m}{q_n\psi(q_m)\varepsilon_m^{1/2}}} \psi(q_m)\psi(q_n)   \  \ +
\sum_{t > \frac{q_m}{q_n\psi(q_m)\varepsilon_m^{1/2}}}\frac{q_m^2}{q_n^2}\cdot\frac{1}{t^2\psi(q_m)\varepsilon_m}\psi(q_n)\\[3ex]
&\ll&
\frac{q_m}{q_n}\frac{\psi(q_n)}{\varepsilon_m^{1/2}}   \  \leq \ \frac{q_m}{q_n}\frac{\psi(q_n)}{\varepsilon_n^{1/2}} \, . \vspace{2mm}
\end{eqnarray*}

\noindent The last inequality makes use of the fact that $(\varepsilon_n)_{n\in \N}$  is a decreasing sequence of real numbers.
Now the fact that $(q_n)_{n\in\N}$ is  lacunary implies that $$ \sum\limits_{1\leq m<n}q_m/q_n \ll 1 ,$$
and so it follows that  for arbitrary   $a,b \in \N$ with $a <  b$, we have
$$ \mathop{\sum\sum}_{a\leq m < n\leq b} T(m,n)\  \ll \ \sum_{n=a}^b\sum_{m=a}^{n-1}\frac{q_m}{q_n}\frac{\psi(q_n)}{\varepsilon_n^{1/2}} \ \ll \  \sum_{n=a}^{b}\frac{\psi(q_n)}{\varepsilon_n^{1/2}}\, \cdot $$
\end{proof}

\begin{proof}[Proof of Lemma~\ref{prop_sum_S_m_n_separation}]

  To start with, observe that the restriction $|sq_m-tq_n|\leq q_m^{\alpha}$ imposed on the non-zero integers $s,t $ associated with $T(m,n)$  implies that
\begin{equation} \label{index_restriction_first_boundsv3}
\left|s\frac{q_m}{q_n}-t\right| < 1 \, .
\end{equation}
Hence,  if  $s$ and $t$ satisfy \eqref{index_restriction_first_boundsv} then both necessarily  must have the same sign and also for each fixed integer $s$ there exists a set $T_s$ of at most two non-zero integers  $t$  satisfying  \eqref{index_restriction_first_boundsv3}.     Thus,
we can  decompose  $T(m,n)$ into two sums:
$$ T(m,n) = T_1(m,n) + T_2(m,n), $$
where
$$ T_1(m,n):= \mathop{\mathop{ \mathop{\sum\sum}_{s, t\in\mathbb{N} } }_{1\leq s \leq m^3/\psi(q_m)}}_{1\leq |sq_m-tq_n|<q_m^{\alpha}}\hspace{-3mm}  \what{W}_{q_m,\gamma,\varepsilon_m}^{+}(sq_m)\what{W}_{q_n,\gamma,\varepsilon_n}^{+}(tq_n) \what{\mu}\left( sq_m-tq_n\right)
$$
and

$$ T_2(m,n):= \mathop{\mathop{ \mathop{\sum\sum}_{s, t\in\mathbb{N} } }_{s> m^3/\psi(q_m)}}_{1\leq |sq_m-tq_n|<q_m^{\alpha}}\hspace{-3mm}   \what{W}_{q_m,\gamma,\varepsilon_m}^{+}(sq_m)\what{W}_{q_n,\gamma,\varepsilon_n}^{+}(tq_n)  \what{\mu}\left( sq_m-tq_n\right) \, . \vspace{2mm}
$$
In view of~\eqref{psi_lb}  the condition   $s\leq m^3/\psi(q_m)$ in the definition of $T_1(m,n)$ implies that  $s \leq m^{12}$. In turn, this together with the fact that $(q_n)_{n\in\N}$ is $\alpha$-separated with the implicit  constant $m_0 = 1$ implies  that $T_1(m,n)$ is  an empty sum. Thus,
$$
T_1(m,n) = 0 \, .
$$

\noindent Regarding $T_2(m,n)$,  on using the trivial bound  $|\what{\mu}(t)|  \leq 1 $  together with  \eqref{W_ub_psi} to bound $|\what{W}_{q_n,\gamma,\varepsilon_n}^{+}(t q_n)|$ and \eqref{W_ub_1s2} to bound  $ |\what{W}_{q_m,\gamma,\varepsilon_n}^{+}(sq_m)| $, we obtain that for any integers $ 1 \leq m < n $
\begin{eqnarray*}
|T(m,n)| \  =  \  |T_2(m,n)|  & \ll  &   \mathop{\mathop{\sum}_{ s> m^3/\psi(q_m) :  } }_{t \in T_s}
|\what{W}_{q_m,\gamma,\varepsilon_m}^{+}(sq_m)| \ | \what{W}_{q_n,\gamma,\varepsilon_n}^{+}(tq_n)|
 \\[2ex]
& \ll  & \sum_{s> m^3/\psi(q_m)}\frac{1}{s^2\psi(q_m)  \, \varepsilon_m }\psi(q_n)  \ \ll  \  \frac{\psi(q_n)}{m^3 \, \varepsilon_m } \, .
\end{eqnarray*}
Now on making use of \eqref{psi_lbsv}, it follows that
$$
|T_2(m,n )| \ \ll \ \frac{\psi(q_n)}{m^2}
$$
and so in turn, for arbitrary   $a,b \in \N$ with $a <  b$, we have
$$ \mathop{\sum\sum}_{a\leq m<n\leq b} |T_2(m,n )| \, \ll\, \sum_{n=a}^{b}\psi(q_n). $$
\end{proof}

\section{Establishing Propositions~\ref{indiethm1} and~\ref{indiethm4} \label{svcountproof}}

To start with we work under the hypotheses of Lemma \ref{prop_sum_S_m_n}  which clearly both Proposition~\ref{indiethm1} and Proposition~\ref{indiethm4} satisfy.  With this in mind, for arbitrary  $a,b \in \N$ with $a <  b$, we have via \eqref{mit1} that
\begin{equation}\label{sumequation}
\mathop{\sum\sum}\limits_{a\leq m<n\leq b}\mu(E_{q_m}^{\gamma}\cap E_{q_n}^{\gamma}) \ \leq  \   \mathop{\sum\sum}\limits_{a\leq m<n\leq b}\what{W}_{m,n}^{+}(0) +  \mathop{\sum\sum}\limits_{a\leq m<n\leq b}S_{m,n}
\end{equation}
where $ W_{m,n}^{+}$ and $S_{m,n}$ are given by \eqref{wmndef} and \eqref{def_S_m_n} respectively.  Now let
\begin{equation} \label{def_epsilon_n_main}
\varepsilon_n:=\min\left(2^{-\delta},\, \left( \sum_{k=a}^{n}\psi(q_k) \right)^{-\delta}\right),
\end{equation}
where $0<\delta\leq 1$ is a parameter to be determined later. By definition, it follows that
\begin{equation} \label{epsilon_inverse_bound}
\varepsilon_n^{-1} \, \leq \, \max\left(2^{\delta},n^{\delta}\right) \, < \, 2n
\end{equation}
and so \eqref{psi_lbsv} associated with Lemma \ref{prop_sum_S_m_n_separation} is satisfied. We also observe that~\eqref{ie_Delta_2B} together with~\eqref{epsilon_inverse_bound} implies that
\begin{equation*} \label{epsilon_sum_inverse_bound}
\sum_{n=1}^{\infty}\frac{1}{n^{\frac{A}{B}}\varepsilon_n^{1/2}}  \, < \, \infty  \, .
\end{equation*}
\noindent Thus, for any  fixed $\alpha>0$, it follows on using  Lemma~\ref{prop_sum_S_m_n} that
\begin{equation} \label{sumofsmn}
\mathop{\sum\sum}_{a\leq m<n\leq b}|S_{m,n}| \, \ll \, \sum_{n=a}^{b}\psi(q_n) + \mathop{\sum\sum}_{a\leq m<n\leq b}\frac{1}{n^{\frac{A}{B}}\varepsilon_m^{1/2}\varepsilon_n^{1/2}}  +  \mathop{\sum\sum}_{a\leq m<n\leq b}|T(m,n)| \, ,
\end{equation}
\noindent where the implied constant is dependent on $\alpha$. We now estimate the second sum on the right hand side of \eqref{sumofsmn} by considering two cases.

\noindent Case 1: $\sum\limits_{k=a}^{b}\psi(q_k)>2$.  It follows that
$$ \frac{1}{n^{\frac{A}{B}}\varepsilon_m^{1/2}\varepsilon_n^{1/2}}\leq\frac{1}{n^{\frac{A}{B}}}\left(\sum_{k=a}^{b}\psi(q_k)\right)^{\delta},
$$
and so
\begin{equation} \label{sum_const_leq_linear_case1}
\mathop{\sum\sum}\limits_{a\leq m<n\leq b} \frac{1}{n^{\frac{A}{B}}\varepsilon_m^{1/2}\varepsilon_n^{1/2}} \leq  \mathop{\sum\sum}\limits_{a\leq m<n\leq b} \frac{1}{n^{\frac{A}{B}}}\left(\sum_{k=a}^{b}\psi(q_k)\right)^{\delta}  \ll \left(\sum_{k=a}^{b}\psi(q_n)\right)^{\delta}\leq \sum_{k=a}^{b}\psi(q_n).
\end{equation}

\medskip

\noindent Case 2: $\sum\limits_{k=a}^{b}\psi(q_k)\leq 2$. It follows that $\varepsilon_n=2^{-\delta}$ for all $a\leq n\leq b$,\vspace{-2mm}
hence
$$
\frac{1}{n^{\frac{A}{B}}\varepsilon_m^{1/2}\varepsilon_n^{1/2}}\ll\frac{1}{n^{\frac{A}{B}}}.
$$
On using~\eqref{psi_is_not_small} with $\tau:=A/2B$, it follows that
$$ \frac{1}{n^{\frac{A}{B}}}\leq\frac{\psi(q_n)}{n^{\frac{A}{2B}}}   \qquad \forall \ n \in \N $$
 and so
\begin{equation} \label{sum_const_leq_linear_case2}
\mathop{\sum\sum}\limits_{a\leq m<n\leq b} \frac{1}{n^{\frac{A}{B}}\varepsilon_m^{1/2}\varepsilon_n^{1/2}}\ll \mathop{\sum\sum}\limits_{a\leq m<n\leq b} {\psi(q_n)}{n^{-\frac{A}{2B}}} \leq \mathop{\sum\sum}\limits_{a\leq m<n\leq b} {\psi(q_n)}{m^{-\frac{A}{2B}}}  \ll \sum_{n=a}^b\psi(q_n).
\end{equation}

\medskip

\noindent The upshot of~\eqref{sum_const_leq_linear_case1} and~\eqref{sum_const_leq_linear_case2} is that both cases give rise to the same estimate which  together with \eqref{sumofsmn} gives
$$
\mathop{\sum\sum}_{a\leq m<n\leq b}|S_{m,n}| \, \ll \, \sum_{n=a}^{b}\psi(q_n)  +  \mathop{\sum\sum}_{a\leq m<n\leq b}|T(m,n)| \, .
$$
This, in turn with \eqref{sumequation} yields the estimate

\begin{equation} \label{sum_const_leq_linear}
\mathop{\sum\sum}\limits_{a\leq m<n\leq b}\mu(E_m^{\gamma}\cap E_n^{\gamma}) \leq  \mathop{\sum\sum}\limits_{a\leq m<n\leq b}\what{W}_{m,n}^{+}(0) +  \mathop{\sum\sum}\limits_{a\leq m<n\leq b}|T(m,n)| + O\left(\sum_{n=a}^{b}\psi(q_n) \right) .
\vspace{3ex}
\end{equation}

We now turn our attention to estimating the first  term on the right hand of \eqref{sum_const_leq_linear}. For this we will make use of   \eqref{main_q_Harman_ub2}.  With this in mind, first note that  by definition $(\varepsilon_n)_{n\in\N}$ is decreasing and so $\varepsilon_n\psi(q_m)\psi(q_n)\leq\varepsilon_m\psi(q_m)\psi(q_n)$ for all $m<n$. Moreover, $\varepsilon_n<1$ for all $n$ and so $\varepsilon_m\varepsilon_n\psi(q_m)\psi(q_n)<\varepsilon_m\psi(q_m)\psi(q_n)$. Thus
\begin{equation} \label{sum_eps_psipsi_developed}
4(1+\varepsilon_m)(1+\varepsilon_n)\psi(q_m)\psi(q_n) \, \leq \, 4\psi(q_m)\psi(q_n) + 12 \varepsilon_m \psi(q_m) \psi(q_n) \, .
\end{equation}

\noindent We estimate the second sum on the right hand side of the above by considering two cases.

\noindent Case 1:  $\sum\limits_{k=a}^b\psi(q_k)<2$.
It follows that
\begin{equation} \label{ub_sum_eps_psipsi_simple}
\mathop{\sum\sum}\limits_{a\leq m<n\leq b} \varepsilon_m\psi(q_m)\psi(q_n)\leq  \mathop{\sum\sum}\limits_{a\leq m<n\leq b}\psi(q_m)\psi(q_n)<\left(\sum_{n=a}^b\psi(q_n)\right)^2<2\sum_{n=a}^b\psi(q_n).
\end{equation}

\medskip

\noindent Case 2: $\sum\limits_{k=a}^b\psi(q_k)\geq 2 $. It follows that
\begin{equation} \label{ub_sum_eps_psipsi}
\begin{aligned}
\mathop{\sum\sum}\limits_{a\leq m<n\leq b} &\varepsilon_m\psi(q_m)\psi(q_n)= \mathop{\sum\sum}\limits_{a\leq m<n\leq b} \psi(q_m)\psi(q_n)\ \min\left(2^{-\delta}, \left(  \sum_{k=a}^{m}\psi(q_k)  \right)^{-\delta} \right) \\[2ex]
&\leq \
\max\left(2^{1-\delta},\left(\sum_{n=a}^b\psi(q_n) \right)^{1-\delta}\right)\mathop{\sum\sum}\limits_{a\leq m<n\leq b}\frac{\psi(q_m)\psi(q_n)}{\max\left(2, \sum_{k=a}^{m}\psi(q_k) \right)} \\[2ex]
&\leq \
\left(\sum_{n=a}^b\psi(q_n) \right)^{1-\delta}\sum_{a\leq n\leq b}\psi(q_n)\sum_{a\leq m\leq b} \frac{\psi(q_m)}{\max\left(2, \sum_{k=a}^{m}\psi(q_k) \right)}  \, .
\end{aligned}
\end{equation}
On using  Lemma~\ref{cor_sum_S_log} in Appendix~D~of~\S\ref{app} with $\gamma=2$, $s_k:=\psi(q_{k-a+1})$ and $a$ and $b$ replaced by $1$ and $b-a+1$ respectively,  we infer that
$$
\sum_{a\leq m\leq b} \frac{\psi(q_m)}{\max\left(2, \sum_{k=a}^{m}\psi(q_k) \right)} \ \leq \  \frac32 + \frac{1}{2\log\frac32}\log \left(\sum_{n=a}^b\psi(q_n) \right).
$$
This together with~\eqref{ub_sum_eps_psipsi} implies that
\begin{equation} \label{ub_sum_eps_psipsi_2}
\begin{aligned}
\mathop{\sum\sum}\limits_{a\leq m<n\leq b} \varepsilon_m\psi(q_m)\psi(q_n) \ &\leq \  \left(\sum_{n=a}^b\psi(q_n) \right)^{1-\delta}\left(\sum_{n=a}^b\psi(q_n) \right)\left(\frac32 + \frac{1}{2\log\frac32}\log \left(\sum_{n=a}^b\psi(q_n) \right)\right)\\[2ex]
\ &\ll \ \left( \sum_{n=a}^b\psi(q_n)\right)^{2-\delta}\log \left( \sum_{n=a}^b\psi(q_n) \right).
\end{aligned}
\vspace{2ex}
\end{equation}
Hence, on combining the estimates~\eqref{main_q_Harman_ub2},~\eqref{sum_eps_psipsi_developed},~\eqref{ub_sum_eps_psipsi_simple} and \eqref{ub_sum_eps_psipsi_2} with find that
\begin{multline} \label{ub_sum_eps_psipsi_combined}
\mathop{\sum\sum}\limits_{a\leq m<n\leq b}  W_{m,n}^{+}(0) \leq  4\left( \sum_{n=a}^{b}\psi(q_n)\right)^2 +  O\left(\left(\sum_{n=a}^b\psi(q_n) \right)^{2-\delta}\log^+ \left(\sum_{n=a}^b\psi(q_n) \right)\right)\\[2ex] + \  O\left( \mathop{\sum\sum}_{a\leq m<n\leq b}(q_m,q_n)\min\Big(\frac{\psi(q_m)}{q_m},\frac{\psi(q_n)}{q_n}\Big) \right)   \, .
\end{multline}

\vspace{3ex}

We stress that the above estimates, in particular \eqref{sum_const_leq_linear} and \eqref{ub_sum_eps_psipsi_combined} are valid under the hypotheses of  both Proposition~\ref{indiethm1} and Proposition~\ref{indiethm4}.

\vspace{3ex}

\begin{proof}[Completing the proof of Proposition~\ref{indiethm1}]
Working under the hypotheses of Proposition~\ref{indiethm1}, we can employ
Lemma \ref{prop_sum_S_m_n_lacunary} with $\alpha=1/2$  to obtain that
\begin{equation} \label{main_q_Smn_ub}
\mathop{\sum\sum}_{a\leq m <n\leq b} |T(m,n)| \, \ll \, \sum_{n=a}^{b}\frac{\psi(q_n)}{\varepsilon_n^{1/2}}\, \ll \,\frac{1}{\varepsilon_b^{1/2}}\sum_{n=a}^{b}\psi(q_n)\, \ll\, \left( \sum_{n=a}^{b}\psi(q_n)\right)^{1+\frac{\delta}{2}} .
\end{equation}
\noindent Also,  since the sequence $\ca{A}=(q_n)_{n\in\N}$ is lacunary, there exists a constant  $K>1$ such that for any integers $m < n$
$$
q_n \, \geq \, K^{n-m} \, q_m  \, ,
$$
and so
\begin{eqnarray} \label{sum_gcd_is_bounded}
\mathop{\sum\sum}\limits_{a\leq m<n\leq b} (q_m,q_n)\min\left(\frac{\psi(q_m)}{q_m},\frac{\psi(q_n)}{q_n}\right)& \leq & \mathop{\sum\sum}\limits_{a\leq m<n\leq b}q_m\,  \frac{\psi(q_n)}{q_n} \nonumber \\[2ex]
 & \leq & \sum_{n=a}^ b \psi(q_n) \sum_{m=1}^{n-1} \frac{q_m}{q_n}  \ \ll \  \sum_{n=a}^{b}\psi(q_n)  \,  .
\end{eqnarray}
\noindent Hence, on combining the estimates   \eqref{sum_const_leq_linear}, \eqref{ub_sum_eps_psipsi_combined}, \eqref{main_q_Smn_ub}  and \eqref{sum_gcd_is_bounded} we find that
\begin{multline*}
\mathop{\sum\sum}_{a\leq m<n\leq b} \mu(E_{q_m}^{\gamma}\cap E_{q_n}^{\gamma}) \ \leq \  4\left( \sum_{n=a}^{b}\psi(q_n)\right)^2 \,
+\ O\Bigg(\left(\sum_{n=a}^{b}\psi(q_n)\right)^{2-\delta}\hspace{-3mm}\log^+\left(\sum_{n=a}^{b}\psi(q_n)\right)\\ + \ \left(\sum_{n=a}^{b}\psi(q_n)\right)^{1+\frac{\delta}{2}} \Bigg).
\end{multline*}
 To complete the proof of Proposition \ref{indiethm1}, we set $\delta = 2/3$ in the above and apply Lemma~\ref{lem_sum_mu_good}.
\end{proof}

\vspace{3ex}

\begin{proof}[Completing the proof of Proposition~\ref{indiethm4}]
Working under the hypotheses of Proposition~\ref{indiethm4}, we can employ
Lemma \ref{prop_sum_S_m_n_separation} with $\alpha$   given by the $\alpha$-separated   sequence $\ca{A}$.   Note that condition \eqref{psi_lb} on
$\psi$ is guaranteed by \eqref{psi_is_not_small} while  \eqref{epsilon_inverse_bound} shows that condition
\eqref{psi_lbsv} on $ \varepsilon_n $ is satisfied.
On combining \eqref{Tmnprop5}, \eqref{sum_const_leq_linear} and  \eqref{ub_sum_eps_psipsi_combined} we find that
\begin{multline*} 
\mathop{\sum\sum}_{a\leq m<n\leq b} \mu(E_{q_m}^{\gamma}\cap E_{q_n}^{\gamma}) \ \leq \  4\left( \sum_{n=a}^{b}\psi(q_n)\right)^2 + O\left(\mathop{\sum\sum}_{a\leq m<n\leq b} (q_m,q_n)\min\left(\frac{\psi(q_m)}{q_m},\frac{\psi(q_n)}{q_n}\right)\right) \\[2ex]
+ O\left(\left(\sum_{n=a}^{b}\psi(q_n)\right)^{2-\delta}\hspace{-2mm}\log^+\left(\sum_{n=a}^{b}\psi(q_n)\right)+\sum_{n=a}^{b}\psi(q_n)\right).
\end{multline*}
To complete the proof of Proposition \ref{indiethm4}, we set $\delta = 1$ in the above  and use Lemma~\ref{lem_sum_mu_good}.
\end{proof}

\section{Deducing Theorem \ref{main_q}  from Theorem \ref{mainSV}   \label{deducing}}

Recall,  that any increasing sequence $\ca{A} \subseteq \cA_{\cS} $   satisfies the  growth condition  \eqref{hyp_q}  with $B = k$ and $C= (\log 2)/2 $ -- see Appendix~B~of~\S\ref{app} for the details.   Thus, Theorem \ref{main_q} will follow from Theorem \ref{mainSV} on showing that:
\begin{itemize}
\item[(i)] any such sequence is $\alpha$-separated,  and
\item[(ii)] the gcd term $E(N)$ appearing in the `error'  term of \eqref{countFSPresultsv} is less than the `main' term $\Psi(N)$; that is to say
    \begin{equation} \label{E_ll_Psi}
E(N)= O(\Psi(N))  \, .
\end{equation}
 \end{itemize}
The point is once we have (i) the hypotheses  of Theorem \ref{mainSV} are verified and the theorem implies that the counting function  $ R(x,N)  $ satisfies \eqref{countFSPresultsv} for $\mu$-almost all $x\in F$.  On the other hand, \eqref{countFSPresultsv} trivially coincides with \eqref{countFSPresult}  once we have (ii) and thus completes the proof of   Theorem \ref{main_q}.


%

\subsection{ Showing that any $\ca{A} \subseteq \cA_{\cS} $ is $\alpha$-separated \label{svalpha}}


The goal of this section is to prove the following statement.

\begin{prop} \label{prop_sum_S_m_n_F}
Let $\ca{A}= (q_n)_{n\in \N} \subseteq \cA_{\cS}  $  be an increasing sequence of natural numbers. Then, $\ca{A}$ is $\alpha$-separated for any $\alpha\in(0,1)$.
\end{prop}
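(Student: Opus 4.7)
The plan is to reduce $\alpha$-separation to a lower bound on a linear form in logarithms and apply the Baker-W\"ustholz theorem; this is exactly the ingredient the authors flag as key at the end of the introduction. Fix $\alpha \in (0,1)$ and suppose, aiming at a contradiction, that for arbitrarily large $m$ there exist $n > m$ and positive integers $s \le m^{12}$, $t \ge 1$ such that $1 \le |sq_m - tq_n| < q_m^{\alpha}$. Since $\ca{A} \subseteq \cA_{\cS}$, I can write $q_m = \prod_{i=1}^{k} p_i^{a_i}$ and $q_n = \prod_{i=1}^{k} p_i^{b_i}$ with $a_i, b_i \in \Z_{\ge 0}$, and form
\[
\Lambda \; := \; \log\!\left(\frac{sq_m}{tq_n}\right) \; = \; \log s - \log t + \sum_{i=1}^{k} (a_i - b_i)\log p_i,
\]
which is nonzero because $sq_m \neq tq_n$. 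Since $\ca{A}$ is increasing we have $q_n \ge q_m$, so $tq_n \ge q_m$, and the bound $|sq_m-tq_n| < q_m^{\alpha}$ together with $|\log(1+x)| \le 2|x|$ for $|x|\le 1/2$ gives $|\Lambda| \le 2 q_m^{\alpha-1}$ once $m$ is large.

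Next I assemble the height and coefficient data for Baker--W\"ustholz. From $t q_n \le (s+1)q_m$ I get $t \le s+1 \le m^{12}+1$, hence $\log t = O(\log m)$; similarly $\log q_n \le \log q_m + O(\log m)$, so $\log q_n \le 2\log q_m$ for large $m$. Since $p_i \ge 2$, each exponent $a_i,b_i$ is at most $(\log q_n)/\log 2$, so $\max_i |a_i - b_i| \le 2\log q_m / \log 2$. Applying Baker--W\"ustholz to the linear form above, whose $k+2$ algebraic entries $s,t,p_1,\dots,p_k$ have absolute logarithmic heights $\log s,\log t,\log p_1,\dots,\log p_k$, and whose integer coefficients are bounded by $B \ll \log q_m$, yields a constant $C=C(\cS)>0$ (absorbing the constant factor $\prod_i \log p_i$) such that
\[
\log|\Lambda| \; \ge \; -C\,(\log s)(\log t)\log B \; \ge \; -C'\,(\log m)^{2}\log\log q_m
\]
for all $m$ large enough, where $C'$ depends only on $\cS$.

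Combining the two estimates gives
\[
(1-\alpha)\log q_m \; \le \; \log 2 + C'(\log m)^{2}\log\log q_m.
\]
Now I invoke the growth estimate $\log q_m \ge (\log 2/2)\, m^{1/k}$, which the authors verify in Appendix~B for every element of $\cA_{\cS}$. Dividing through by $\log q_m$ and noting that $x \mapsto (\log x)/x$ is decreasing for large $x$, the right-hand side is at most a constant multiple of $(\log m)^{3}/m^{1/k}$, which tends to $0$. Since $1-\alpha>0$, this is a contradiction once $m$ exceeds some $m_0 = m_0(\alpha,\cS)$, which is exactly the $\alpha$-separation property.

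The main obstacle is the careful bookkeeping for Baker--W\"ustholz: its power here hinges on the fact that the coefficient bound $B$ enters only logarithmically, so that $\log B = O(\log\log q_m)$. Without this feature the lower bound on $|\Lambda|$ would be of size $q_m^{-o(1)}$ or worse, and could not be defeated by the elementary upper bound $|\Lambda| \le 2 q_m^{\alpha-1}$; the whole argument would collapse. A minor but necessary sanity check is to handle the edge cases $s=1$ or $t=1$ by simply dropping the vanishing $\log 1$ term from $\Lambda$ before invoking Baker--W\"ustholz.
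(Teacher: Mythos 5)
Your proof is correct and takes essentially the same route as the paper: reduce to the nonvanishing linear form $\log s - \log t + \sum_i(a_i-b_i)\log p_i$, apply Baker--W\"ustholz with the observation that the heights of $s,t$ and the coefficient bound enter only through $\log s,\log t\ll\log m$ and $\log B\ll\log\log q_m$, and then invoke the growth bound $\log q_m\gg m^{1/k}$ from Appendix~B to force $m\le m_0(\alpha,\cS)$. The only cosmetic difference is that you pass from $|sq_m-tq_n|$ to $|\Lambda|$ via $\log(1+x)\approx x$ in a single symmetric step, whereas the paper uses $e^x-1\ge x$ and treats the cases $sq_m<tq_n$ and $sq_m>tq_n$ separately.
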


The proof of the proposition we will make essential use of a fundamental theorem due to Baker $\&$ W\"ustholz \cite{BW} in the theory of linear forms in logarithms. The following statement is a simplified version of that appearing in  \cite{BW}.  It is more than adequate for the application we have in mind.

\medskip

\begin{thhjkbw} {\it
Let $n\in\mathbb{N}$,  $b_1 \ldots b_n\in\mathbb{Z}$ and  $a_1 \ldots  a_n \in\mathbb{N}$. Suppose that
$$
\Lambda:=\sum_{k=1}^n b_k\log a_k\ne 0.
$$
Then,
$$
\log|\Lambda|>-C(n)\cdot\prod_{k=1}^n\max\left(1,\log a_k\right)\cdot \log \Big( \max\left(1,|b_1|,\dots,|b_n|\right)  \Big)
$$
where
\begin{equation} \label{def_C_n}
C(n):=18(n+1)! \, n^{n+1} (32)^{n+2}\log(2n).
\end{equation}}
\end{thhjkbw}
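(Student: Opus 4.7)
The statement is the effective Baker--W\"ustholz lower bound for linear forms in logarithms, restricted here to rational integer coefficients $b_k$ and positive integer arguments $a_k$. A complete proof is the main content of the Baker--W\"ustholz paper [BW], so my plan is to outline the classical transcendence architecture and indicate where the explicit constant $C(n)$ enters; I would not attempt to reprove it from scratch within this paper.

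Setting $A_k:=\max(1,\log a_k)$ and $B:=\max(1,|b_1|,\ldots,|b_n|)$, the strategy is argument by contradiction: assume
$$0<|\Lambda|<\exp\bigl(-C(n)\,A_1\cdots A_n\,\log B\bigr)$$
and derive an impossible algebraic dependence. The first step is to construct an auxiliary function of $n-1$ complex variables, of the form
$$\Phi(z_1,\ldots,z_{n-1})=\sum_{\lambda}p(\lambda)\,a_1^{\lambda_1 z_1}\cdots a_{n-1}^{\lambda_{n-1} z_{n-1}}\prod_{k=1}^{n-1}z_k^{\mu_k(\lambda)},$$
with rational integer coefficients $p(\lambda)$ supplied by Siegel's Lemma so that $\Phi$ together with all derivatives of order $<T$ vanishes on the integer lattice block $\{1,\ldots,L\}^{n-1}$. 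The degree parameters, the vanishing order $T$, and the grid size $L$ are chosen so that the system is under-determined, and the heights of the $p(\lambda)$ are bounded in terms of the $A_k$ and $B$.

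The second, technically most delicate step is the analytic extrapolation. The hypothesis that $|\Lambda|$ is extraordinarily small means $\log a_n$ is approximated to very high precision by $-b_n^{-1}\sum_{k<n}b_k\log a_k$, which endows $\Phi$ with an approximate extra multiplicative symmetry. Iterating Cauchy's integral formula on expanding polydisks together with a Schwarz-type lemma, one shows that $\Phi$ continues to vanish (with slightly reduced order) on progressively larger grids $\{1,\ldots,L^{(j)}\}^{n-1}$. After sufficiently many such extrapolations the vanishing is so extreme that the zero estimate of Philippon--W\"ustholz on the algebraic group $\mathbb{G}_m^{n-1}$ forces a non-trivial multiplicative relation among $a_1,\ldots,a_n$ of bounded height, which contradicts the $\mathbb{Q}$-linear independence of $\log a_1,\ldots,\log a_n$ forced by $\Lambda\ne 0$.

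The main obstacle is the explicit bookkeeping yielding the precise expression $C(n)=18(n+1)!\,n^{n+1}(32)^{n+2}\log(2n)$. The factorial and the factor $n^{n+1}$ enter through the multi-dimensional Siegel Lemma height bound and the dimension of the ambient torus; the $(32)^{n+2}$ comes from the Schwarz-lemma decay factors compounded across all extrapolation steps; the $\log(2n)$ is the sharpened form of the W\"ustholz zero estimate. Simultaneously optimizing $L$, $T$, the number of extrapolations, and the Siegel parameters so that these contributions collapse into a single clean constant is the hard part, and is precisely the contribution of [BW] over earlier (weaker or non-explicit) versions of Baker, Feldman, and others.
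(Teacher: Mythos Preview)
Your assessment is correct: the paper does not prove Theorem~BW at all. It is quoted as a black box from Baker and W\"ustholz \cite{BW} and then applied in the proof of Proposition~\ref{prop_sum_S_m_n_F}; no argument, sketch, or even heuristic for the constant $C(n)$ is offered. Your decision to treat it as a cited external result and merely outline the transcendence machinery is therefore exactly in line with the paper's own stance.

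One small caution about the sketch itself: the assertion that $\Lambda\neq 0$ ``forces the $\mathbb{Q}$-linear independence of $\log a_1,\ldots,\log a_n$'' is not accurate as written; $\Lambda\neq 0$ only says that this particular integer combination is non-zero, and the $a_k$ may well be multiplicatively dependent. In the actual Baker--W\"ustholz argument one first reduces to a multiplicatively independent subset, and the zero estimate yields a contradiction with $\Lambda\neq 0$ rather than with full linear independence. This does not affect the paper under review, since the theorem is simply invoked, but you may wish to adjust that sentence if the sketch is retained.
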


\medskip

\begin{proof}[Proof of Proposition~\ref{prop_sum_S_m_n_F}]
Let $\alpha\in(0,1)$. The aim is to show that there  exists a constant $m_0 \in \N$  so that for any natural numbers $ m <  n$,  if
\begin{eqnarray}
1 &\leq& |sq_m-t q_n|< q_m^{\alpha}  \label{prop_BW_L}
\end{eqnarray}
 for some $s,t \in \N$ with
 \begin{equation}
s \leq  m^{12}  \label{prop_BW_s_2} \, ,
\end{equation}
then $m \le m_0$. With this in mind, first of all note that ~\eqref{prop_BW_s_2} and  ~\eqref{prop_BW_L} imply that
\begin{equation} \label{t_ub_1}
t \, <  \, \frac{q_m}{q_n}s+\frac{1}{2} \, \leq \, \frac{q_m}{q_n}m^{12}+\frac{1}{2}   \, < \,  m^{12}+\frac{1}{2}  \, .
\end{equation}
Hence, taking into account that $m$ and $t$ are integers, we have that
\begin{equation} \label{t_ub_2}
t\leq m^{12}.
\end{equation}
From the second  inequality appearing in \eqref{t_ub_1} and the fact that $t \in \N$, it follows that
$$ 1 \le  t<\frac{q_m}{q_n}m^{12} + \frac{1}{2}\, .   $$
Hence,
\begin{equation*} \label{ub_n_m}
q_n<2m^{12}q_m
\end{equation*}

\noindent which together with the fact that  $q_m<q_n$,  implies that
\begin{equation} \label{qm_qn_double_ie}
\log q_m < \log q_n < \log q_m + 12\log m + \log 2.
\end{equation}

\noindent Note that $sq_m \ne t q_n$ because of~\eqref{prop_BW_L} and assume for  the moment
that $sq_m< t q_n$.
Then, on using the fact that $ \exp (x) -1 \ge  x $ for any $ x > 0$,  it follows that
\begin{eqnarray} \label{pretty}
|sq_m-t q_n| & = & t q_n-s q_m \nonumber  \\[1ex]
& = & sq_m\left(\exp\left(\log t+\log q_n-\log s-\log q_m\right)-1\right) \nonumber \\[1ex]
&\geq &  sq_m\left(\log t+\log q_n-\log s-\log q_m\right).
\end{eqnarray}
We now proceed to  estimate the quantity involving the logarithm terms.   On using the fact that $ q_m, q_n \in \cA_{\cS} $,   it follows via Theorem~BW that
\begin{equation}
|\log t+\log q_n-\log s-\log q_m | \, \geq \,  \exp\Big(-C(k+2)\log s\log t \prod_{p\in\cS}\log p \log\max_{a\in\mathcal{E}_{m,n}} a \Big) \, .   \label{lf_estimate_2sv}
\end{equation}
Here $C(k+2)$ is the constant associated with Theorem~BW
and  $\mathcal{E}_{m,n}$ is the set of exponents of prime powers in the canonical factorisation of the integers $q_m, q_n \in  \cA_{\cS} $.  By definition, for any $a \in\mathcal{E}_{m,n}$ we have $2^a\leq q_n$ and so
 $a\leq  \log q_n/ \log 2$. This together with the upper bound  estimates ~\eqref{prop_BW_s_2} and~\eqref{t_ub_2}, implies  that
\begin{eqnarray} \label{lf_estimate_3}
\log s\log t \prod_{p\in\cS}\log p \log\max_{a\in\mathcal{E}_{m,n}} a &\leq  & 144\left(\log m\right)^2\Big(\prod_{p\in\cS}\log p\Big)\log\left(\log q_n/\log 2\right)   \nonumber \\[2ex]
&\ll  &  k^2 \, \left(\log\log q_m\right)^3,
\end{eqnarray}
where in the last step we have also used \eqref{qm_qn_double_ie} and \eqref{hyp_q} with $B=k$.  Hence
\begin{equation}
|\log t+\log q_n-\log s-\log q_m |  \, \geq \,  \exp\left(-  \tilde{C} \left(\log\log q_m\right)^3 \right) \, ,   \label{lf_estimate_2slv}
\end{equation}
where the constant $\tilde{C}$ depends on the set $\cS$ only. This together with   \eqref{pretty} yields that
\begin{equation} \label{lf_estimate_4}
|sq_m-t q_n|=t q_n-s q_m\geq\exp\left(\log q_m-\tilde{C}\left(\log\log q_m\right)^3\right)  \, .
\end{equation}
Now if  $sq_m> t q_n$, the above above argument can easily be modified to show that~\eqref{lf_estimate_4} still holds. Indeed, on using the fact that $q_m<q_n$,   we find that
\begin{eqnarray*} \label{pretty*}
|sq_m-t q_n|  & = & s q_m  -  t q_n  \\[1ex]
&\geq &  tq_n\left(\log s+\log q_m-\log t-\log q_n\right) \\[2ex]
&  \stackrel{\eqref{lf_estimate_2slv}}{\geq} & t q_n \exp\big(-  \tilde{C} \left(\log\log q_m\right)^3 \big)
\\[2ex]
&  \geq &  \exp\big( \log q_m -  \tilde{C} (\log\log q_m)^3 \big)  \, .
\end{eqnarray*}

\medskip

Comparing~\eqref{lf_estimate_4} with~\eqref{prop_BW_L}, we find that
\begin{equation*}
\log q_m-\tilde{C}\left(\log\log q_m\right)^3 \leq \alpha\log q_m  .
\label{uiop}
\end{equation*}
We can rewrite this as
\begin{equation} \label{lf_estimate_5}
\tilde{C}\left(\log\log q_m\right)^3 \geq (1-\alpha)\log q_m.
\end{equation}
As $\alpha<1$, it is evident that this inequality can only hold for $m$ not exceeding some integer  $m_0$ that depends on  $\cS$ and $\alpha$ only.  This completes the proof of the proposition.

\end{proof}


\subsubsection{A stronger version of Proposition~\ref{prop_sum_S_m_n_F} involving Property D
\label{rem_Hypothesis_F}  }

In this section we show  that the proof of  Proposition~\ref{prop_sum_S_m_n_F} can be easily adapted to prove the analogous statement for sequences $ \ca{A} $ satisfying Property D  -- see Remark \ref{remhypF} of \S\ref{bl} for the defintiion.  Note that in the proof of Proposition~\ref{prop_sum_S_m_n_F} we made direct use of the fact  that the growth condition  \eqref{hyp_q} is satisfied for $\cA \subseteq \cA_{\cS}$.  By definition,  this condition automatically holds for  sequences satisfying Property D.

Formally, we establish the following generalisation of  Proposition~\ref{prop_sum_S_m_n_F}.

\begin{thpropa}
Let $\ca{A}= (q_n)_{n\in \N} $  be an increasing sequence of natural numbers that satisfies Property D. Then, $\ca{A}$ is $\alpha$-separated for any $\alpha\in(0,1)$.
\end{thpropa}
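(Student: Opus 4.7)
The plan is to mimic the proof of Proposition~\ref{prop_sum_S_m_n_F} step by step, with the only essential change being a careful accounting of how Property D controls the quantities entering the Baker--W\"ustholz linear forms estimate. Fix $\alpha\in(0,1)$. I would begin by supposing, for contradiction, that there exist arbitrarily large $m$ and some $n>m$ together with positive integers $s\le m^{12}$ and $t$ such that $1\le |sq_m - tq_n|<q_m^\alpha$. Exactly as in \eqref{t_ub_1}--\eqref{qm_qn_double_ie}, one derives $t\le m^{12}$ and
\[
\log q_m \;<\; \log q_n \;<\; \log q_m + 12\log m + \log 2.
\]
Assuming (by symmetry) $sq_m<tq_n$, one uses $\exp(x)-1\ge x$ for $x>0$ to obtain, as in~\eqref{pretty},
\[
|sq_m-tq_n|\;\ge\; sq_m\bigl(\log t+\log q_n-\log s-\log q_m\bigr).
\]

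The next step is to apply Theorem~BW to the linear form $\Lambda:=\log t+\log q_n-\log s-\log q_m$, whose entries $s,t,q_m,q_n$ are products of at most $2+2D$ distinct primes. Here I would invoke Property~D(ii)(a) to bound $n$ in Theorem~BW by $2D+2$, which turns $C(n)$ into an absolute constant $C_D$ depending only on $D$. The crucial improvement over the $\cA_{\cS}$ setting is that the primes dividing $q_m q_n$ are no longer fixed: one must estimate $\prod_{p\mid q_mq_n}\log p$. Since $q_m<q_n$, Property~D(ii)(b) gives $\log p \le (\log q_n)^{(1-\epsilon)/(2D)}$ for every such prime, and there are at most $2D$ of them, so
\[
\prod_{p\mid q_m q_n}\log p \;\le\;(\log q_n)^{1-\epsilon}.
\]
The exponents in the canonical factorisations are bounded by $\log q_n/\log 2$ as before, and $\log s\log t \le 144(\log m)^2\ll (\log\log q_m)^2$ via~\eqref{hyp_q}. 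Combining these with~\eqref{qm_qn_double_ie} yields, in place of~\eqref{lf_estimate_3},
\[
\log s\,\log t \cdot \!\!\prod_{p\mid q_m q_n}\!\!\log p\cdot\log\max_{a\in\mathcal E_{m,n}}a
\;\ll\; (\log\log q_m)^3\,(\log q_m)^{1-\epsilon},
\]
and hence $|\Lambda|\ge \exp\!\bigl(-\tilde C(\log\log q_m)^3(\log q_m)^{1-\epsilon}\bigr)$ for some constant $\tilde C$ depending only on $D,\epsilon$ and $C_D$.

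Plugging this lower bound back into the estimate for $|sq_m-tq_n|$, as in~\eqref{lf_estimate_4}, gives
\[
q_m^\alpha \;>\; |sq_m-tq_n| \;\ge\; \exp\!\bigl(\log q_m - \tilde C(\log\log q_m)^3(\log q_m)^{1-\epsilon}\bigr),
\]
the case $sq_m>tq_n$ being handled by the same modification as in the original proof. Taking logarithms, one obtains
\[
(1-\alpha)\log q_m \;\le\; \tilde C\,(\log\log q_m)^3(\log q_m)^{1-\epsilon}.
\]
Because the left-hand side grows linearly in $\log q_m$ while the right-hand side is $O\bigl((\log q_m)^{1-\epsilon/2}\bigr)$, this inequality can only be satisfied for $m$ bounded above by some $m_0=m_0(\cA,\alpha,\epsilon,D,C_D,n_0)$, producing the desired contradiction. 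The main technical obstacle, as anticipated, is the bookkeeping in the product $\prod\log p$: one must exploit Property~D(ii)(a) and (ii)(b) in precisely the right balance so that their combined contribution is $(\log q_n)^{1-\epsilon}$ rather than a power $\ge 1$, since any exponent $\ge 1$ would swamp the main term $\log q_m$ and the argument would collapse.
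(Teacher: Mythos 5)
Your proposal follows essentially the same route as the paper's proof sketch: you reuse the reduction up to the inequality~\eqref{pretty}, replace $\prod_{p\in\cS}\log p$ by $\prod_{p\mid q_mq_n}\log p$, invoke Property~D(ii)(a) to bound the number of terms in the Baker--W\"ustholz linear form by $2D+2$ and Property~D(ii)(b) to bound the product of $\log p$ by $(\log q_n)^{1-\epsilon}$, and then combine with~\eqref{hyp_q} and~\eqref{qm_qn_double_ie} to get the right-hand side down to $O\bigl((\log q_m)^{1-\epsilon/2}\bigr)$, yielding the bounded-$m$ contradiction exactly as in~\eqref{lf_estimate_5}. The argument and the accounting are correct and match the paper's sketch.
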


\begin{proof}[Proof (sketch)]  The proof is exactly the same as that of Proposition~\ref{prop_sum_S_m_n_F} up to and including the inequality given by \eqref{pretty}.  The main modifications after that are as follows:

\begin{itemize}
  \item The product ${\prod_{p\in\cS}}$ appearing in \eqref{lf_estimate_2sv} and thereafter needs to be replaced by ${\prod_{p\in\mathcal{P}_{m,n}}}$ in which $\mathcal{P}_{m,n}$ denotes the set of all prime divisors of $q_m \, q_n$. Also the constant associated with Theorem~BW is $C(2D+2)$ where $D$ is constant coming from Property D (namely part (a) of condition (ii)).  Note that since $\cA $ satisfies Property D, we have that for $n_0 <m<n $ (which, without loss of generality, we  can assume)
      \begin{equation} \label{yuck}
        \#   \mathcal{P}_{m,n}\leq 2\cdot D \, .
      \end{equation}

  \item The upper bound for $p\in\mathcal{P}_{m,n}$ coming from Property D (namely part (b) of condition (ii))  and \eqref{yuck} have to be added to the list of the upper bound inequalities~\eqref{prop_BW_s_2} and~\eqref{t_ub_2} used to derive the analogue of \eqref{lf_estimate_3}; namely, for $n_0 <m<n $ with $m $ sufficiently large
$$
\begin{aligned}
\log s\log t \prod_{p\in\mathcal{P}_{m,n}} \!\!\! \log p \; \log\max_{k\in\mathcal{E}_{m,n}} k &\leq \frac{144}{(\log 2)^{2 D}}\left(\log m\right)^2
\Big( \left(\log q_n\right)^{\frac{1-\epsilon}{2D}} \Big)^{2D}
    \log\left(\log q_n \! - \! \log 2\right)\\[2ex]
&\ll B^2 \left(\log q_m\right)^{1-\epsilon} (\log\log q_m)
\\[2ex]
&\ll \left(\log q_m\right)^{1-\epsilon/2},
\end{aligned}
$$
where $B$ is the constant associated with the growth condition \eqref{hyp_q}.

\end{itemize}
With the above main modifications in mind, we continue exactly as  in the proof of Proposition~\ref{prop_sum_S_m_n_F} and obtain the following analogue of
\eqref{lf_estimate_5}
\begin{equation*}
\tilde{C}\left(\log q_m\right)^{1-\epsilon/2} \geq (1-\alpha)\log q_m \, ,
\end{equation*}
where $\tilde{C}$ is a constant that depends on $B$ and $D$ only.
As $\alpha<1$, it is evident that this inequality can only hold for $m$ not exceeding some integer  $m_0$ that depends only  on  the constants associated with Property D, $\alpha$ and $\epsilon$.  This completes the proof of the proposition.
\end{proof}

%

\subsection{ Showing that $E(N)= O\big(\Psi(N) \big) $ \label{s_gcd}}

The goal  of this section is to establish \eqref{E_ll_Psi}.  As we shall soon see,  this is an immediate consequence of the following statement.

\begin{thm} \label{thm_gcd}
 Let $\ca{A}= (q_n)_{n\in \N} \subseteq \cA_{\cS} $  be an increasing sequence of natural numbers.    Then, there exists a constant $C$ which depends only on the cardinality $k$ of  $\cS$, such that for any integer $n \ge 2$
\begin{equation} \label{thm_gcd_sum}
\sum_{m=1}^{n-1}
\frac{(q_m,q_n)}{q_n}\leq C  \, .
\end{equation}
\end{thm}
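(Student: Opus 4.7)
The plan is to exploit the divisor structure of $q_n \in \cA_{\cS}$ together with the strict monotonicity of the sequence $\cA$. First I would re-index the sum by the value $v_m := q_n/(q_m,q_n)$, which is a divisor of $q_n$; since $(q_m,q_n)/q_n = 1/v_m$, collecting terms according to $d = v_m$ yields
$$\sum_{m=1}^{n-1} \frac{(q_m,q_n)}{q_n} \ = \ \sum_{d\mid q_n} \frac{N(d)}{d}, \qquad N(d) := \#\{1\leq m\leq n-1 : v_m = d\}.$$

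Next I would bound $N(d)$ for each divisor $d$ of $q_n$. If $v_m = d$ then $q_n/d$ divides $q_m$; since $\cA_{\cS}$ is closed under taking divisors, both $q_n/d$ and $q_m$ lie in $\cA_{\cS}$, so the quotient $f := q_m/(q_n/d)$ is again an element of $\cA_{\cS}$, and the condition $q_m < q_n$ forces $f < d$. Because $\cA$ is strictly increasing, distinct indices $m$ produce distinct values of $q_m$, hence distinct $f$'s, so
$$N(d) \ \leq \ \#\{f \in \cA_{\cS} : f \leq d\} \ \leq \ (\log_2 d + 1)^k,$$
the last inequality following because $f = \prod_i p_i^{e_i} \leq d$ together with $p_i \geq 2$ forces each $e_i \leq \log_2 d$, giving at most $\log_2 d + 1$ choices per prime, independently of the actual primes in $\cS$.

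Combining these two steps and applying the elementary inequality $(1+y)^k \leq C_k \cdot 2^{y/2}$ for $y \geq 0$ (with $C_k$ depending only on $k$) at $y = \log_2 d$ gives $(1+\log_2 d)^k \leq C_k \, d^{1/2}$. Writing $q_n = \prod_{i=1}^{k} p_i^{\alpha_i}$ and factoring the resulting divisor sum, one obtains
$$\sum_{m=1}^{n-1} \frac{(q_m,q_n)}{q_n} \ \leq \ C_k \sum_{d \mid q_n} d^{-1/2} \ = \ C_k \prod_{i=1}^{k} \sum_{\delta = 0}^{\alpha_i} p_i^{-\delta/2} \ \leq \ C_k \prod_{i=1}^{k} \frac{1}{1 - p_i^{-1/2}} \ \leq \ C_k \, (2+\sqrt{2})^k,$$
where the last inequality uses $p_i \geq 2$, hence $1 - p_i^{-1/2} \geq 1 - 1/\sqrt{2}$. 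The resulting constant depends only on $k$, as required.

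The hard part will be ensuring that the various bounds are uniform in the specific primes of $\cS$: both the counting estimate $(\log_2 d + 1)^k$ for the number of $k$-smooth integers up to $d$, and the closing telescoping via $1 - p_i^{-1/2} \geq 1 - 1/\sqrt{2}$, rely only on the inequality $p_i \geq 2$. This uniformity is exactly what keeps the final constant dependent only on $|\cS| = k$ and not on which primes $p_1,\ldots,p_k$ make up $\cS$.
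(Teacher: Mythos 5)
Your proof is correct, and it reaches the conclusion by a genuinely different and rather cleaner route than the paper. The paper first reduces to the case $\cA = \cA_{\cS}$, then writes $q_n = \prod_i p_i^{a_i}$, $q_m = \prod_i p_i^{b_i}$ and partitions the indices $m$ according to the subset $\cT\subseteq\{1,\ldots,k\}$ of coordinates with $b_i<a_i$; each piece is controlled by two auxiliary estimates, Lemma~\ref{lem_tech_subsum} (counting \emph{maximal} divisors of $q_n$ by injecting minimal solutions of a weighted inequality into compositions) and Lemma~\ref{lem_tech_log} (a boundedness estimate for a multiple series). You instead re-index by the divisor $d=q_n/(q_m,q_n)$ of $q_n$, observe that each contributing $m$ produces, via $f=q_m d/q_n$, a distinct element of $\cA_{\cS}$ strictly below $d$, so that $N(d)\leq (\log_2 d+1)^k$, and then close with $(1+\log_2 d)^k\leq C_k\, d^{1/2}$ together with the multiplicative bound $\sum_{d\mid q_n} d^{-1/2}\leq (2+\sqrt{2})^k$. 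Both arguments rest on the same two structural facts --- that there are only $O_k((\log X)^k)$ $\cS$-smooth integers up to $X$, and that a divisor sum of a $k$-smooth integer against a negative power of $d$ is uniformly bounded in $n$ --- but your decomposition gets there directly, bypasses the combinatorial lemmas, and applies to an arbitrary increasing $\cA\subseteq\cA_{\cS}$ without the preliminary reduction to $\cA=\cA_{\cS}$ (your injectivity step $m\mapsto q_m\mapsto f$ needs only that $\cA$ is strictly increasing). One small remark for precision: your bound $N(d)\leq\#\{f\in\cA_{\cS}:f\leq d\}$ is a deliberate overcount, since you use only the implication $v_m=d\Rightarrow (q_n/d)\mid q_m$ and discard the stronger fact $(q_m,q_n)=q_n/d$; this is harmless and exactly what is needed.
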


\bigskip

\noindent Clearly,  Theorem~\ref{thm_gcd} implies that
$$
\begin{aligned}
E(N) &:=\ \mathop{\sum\sum}_{1\leq m<n\leq N} (q_m,q_n)\min\left(\frac{\psi(q_m)}{q_m},\frac{\psi(q_n)}{q_n}\right)
\\[2ex]
&\leq
\mathop{\sum\sum}_{1\leq m<n\leq N} \frac{(q_m,q_n)}{q_n}\psi(q_n)
\  \ll  \ 
\sum_{n=1}^N\psi(q_n):=\Psi(N)
\end{aligned}
$$

\noindent and so yields the desired goal.
Note the left hand side of~\eqref{thm_gcd_sum} can only increase if we enlarge our sequence $\ca{A}= (q_n)_{n\in \N}  $.   So, without loss of generality, we can assume that $ \ca{A} = \cA_{\cS}$ during the course of establishing Theorem~\ref{thm_gcd}.  With this in mind, we start by proving a couple of useful lemmas.

\begin{lem} \label{lem_tech_log}
For any $s\in\R$, there exists a constant $C_s$, which depends on $s$ only, such that, for any $r\in\N$, and any integers $n_1,\dots,n_r \geq 2,$
$$ \sum_{t_1=1}^{\infty}\dots\sum_{t_r=1}^{\infty}\frac{\left(t_1\log_2 n_1+\dots+t_r\log_2 n_r\right)^s}{n_1^{t_1}\cdot\dots\cdot n_r^{t_r}}<C_s  . $$
\end{lem}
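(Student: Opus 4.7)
The plan is to split on the sign of $s$ and, in each case, reduce the multiple sum to a product of one-dimensional geometric series that can be handled term by term. Set $a_i := \log_2 n_i$ and $T := t_1 a_1 + \dots + t_r a_r$; since $n_i \ge 2$ gives $a_i \ge 1$ and each $t_i \ge 1$, we have $T \ge r \ge 1$, a lower bound on $T$ that will let me control $T^s$ pointwise.

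For $s \le 0$ I would use $T^s \le 1$ to decouple the sum and get
\[ \sum_{t_1,\dots,t_r \ge 1} \frac{T^s}{n_1^{t_1}\cdots n_r^{t_r}} \;\le\; \prod_{i=1}^r \sum_{t_i \ge 1} n_i^{-t_i} \;=\; \prod_{i=1}^r \frac{1}{n_i-1} \;\le\; 1, \]
so $C_s = 1$ works.

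For $s > 0$ the key ingredient would be the elementary estimate $u^s \le K_s \, 2^{u/2}$ for $u \ge 1$, where $K_s := \sup_{u\ge 1} u^s 2^{-u/2} < \infty$ depends only on $s$. Applying it with $u = T$ (legitimate since $T \ge 1$) and using $n_i^{t_i} = 2^{a_i t_i}$, the sum decouples again:
\[ \sum_{t_1,\dots,t_r\ge 1} \frac{T^s}{n_1^{t_1}\cdots n_r^{t_r}} \;\le\; K_s \prod_{i=1}^r \sum_{t_i \ge 1} 2^{-t_i a_i/2} \;=\; K_s \prod_{i=1}^r \frac{1}{\sqrt{n_i}-1}. \]
The hard part will be bounding $\prod_i (\sqrt{n_i}-1)^{-1}$ uniformly in $r$: for repeated values $n_i = 2$ this product grows like $(\sqrt 2 -1)^{-r}$, so the argument tacitly requires the $n_i$'s to be distinct. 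In the application to Theorem~\ref{thm_gcd} the $n_i$'s will be distinct primes in $\cS$, and after sorting $n_1 < n_2 < \dots$ one has $n_i \ge i+1$; then $\log(\sqrt{i+1}-1) \sim \tfrac{1}{2}\log i$ forces $\sum_i \log(\sqrt{n_i}-1)$ to diverge to $+\infty$, whence the partial products tend to $0$ and their supremum over $r$ is finite, producing the desired constant $C_s$.
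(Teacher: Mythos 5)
Your argument is essentially the paper's: the key pointwise estimate $u^s \le K_s\,2^{u/2}$ for $u\ge 1$ is the paper's bound $(\log_2 x)^s/\sqrt{x}\le C_s$ in disguise (write $x=2^u$), and the ensuing decoupling into geometric series produces the same intermediate quantity $K_s\prod_i(\sqrt{n_i}-1)^{-1}$. Your explicit treatment of $s\le 0$ is a small improvement --- the paper's $C_s=\sup_{x\ge 1}(\log_2 x)^s/\sqrt{x}$ is in fact infinite for $s<0$ because of the singularity at $x=1$, and the supremum should be taken over $x\ge 2$, which is legitimate since the values $n_1^{t_1}\cdots n_r^{t_r}$ used are always $\ge 2$. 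More substantively, you are right to flag the final step: the paper simply asserts $\prod_{i=1}^r(\sqrt{n_i}-1)^{-1}\le 1$, which is false (already a single factor $(\sqrt2-1)^{-1}>2$ breaks it), and indeed the lemma is false exactly as stated: taking $s=1$ and $n_1=\cdots=n_r=2$, the left-hand sum evaluates to $2r$ and so cannot be bounded by a constant depending only on $s$. Your diagnosis and sketched repair are correct: once the $n_i$ are distinct, sorting gives $n_i\ge i+1$, the factors $\sqrt{n_i}-1$ eventually exceed $1$, and the partial products $\prod_{i\le r}(\sqrt{n_i}-1)^{-1}$ are bounded uniformly in $r$. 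That distinctness does hold in the one place the lemma is used (Theorem~\ref{thm_gcd}), where the $n_i$ are distinct primes of $\cS$. So your proposal follows the paper's route, and your extra care correctly exposes a gap present in both the paper's proof and its formulation of the lemma.
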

\begin{proof}

First note that the function defined for $x\geq 1$ by $x\to \dfrac{\left(\log_2 x\right)^s}{\sqrt{x}} $ is bounded above by a constant. Define
$$ C_s = \sup_{x\geq 1}\frac{(\log_2 x)^s}{\sqrt{x}} \, >0 \,. $$
Then,
\begin{eqnarray*}
\sum_{t_1=1}^{\infty}\dots\sum_{t_r=1}^{\infty}\frac{\left(t_1\log_2 n_1+\dots+t_r\log_2 n_r\right)^s}{n_1^{t_1}\cdot\dots\cdot n_r^{t_r}} & \leq & \sum_{t_1=1}^{\infty}\dots\sum_{t_r=1}^{\infty}\frac{C_s}{n_1^{t_1/2}\cdot\dots\cdot n_r^{t_r/2}} \\[2ex]
&\leq & C_s\cdot\frac{1}{(\sqrt{n_1}-1)\cdot\dots\cdot(\sqrt{n_r}-1)} \\[2ex]
&\leq & C_s ,
\end{eqnarray*}
and this proves the lemma.
\end{proof}
\begin{lem} \label{lem_tech_subsum}
Let $K\geq 1$  be a real number and $n \in \N$.
Then
\begin{equation} \label{lem9concl}
\sum_{\substack{q_m | q_n \\q_m\cdot K<q_n}}\frac{(q_m,q_n)}{q_n}\, \leq \, \frac{\left(\log_2 K+2\right)^{k-1}}{K}\cdot\prod_{i=1}^{k}\frac{p_i}{p_i-1}.
\end{equation}
\end{lem}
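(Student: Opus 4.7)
The plan is to first reduce the claim to bounding, for each $K\geq 1$, the infinite series
$$
S_k(K) \;:=\; \sum_{\substack{(c_1,\dots,c_k)\in\Z_{\ge 0}^k\\ \prod_{i=1}^k p_i^{c_i}>K}} \prod_{i=1}^k p_i^{-c_i}
$$
by $\frac{(\log_2 K+2)^{k-1}}{K}\prod_{i=1}^k \frac{p_i}{p_i-1}$, and then to prove this bound by induction on $k$. The reduction is immediate: since $q_m\mid q_n$ we have $(q_m,q_n)=q_m$, and writing $q_n=\prod p_i^{a_i}$, $q_m=\prod p_i^{b_i}$, the substitution $c_i=a_i-b_i$ turns the sum in \eqref{lem9concl} into a sum of $\prod p_i^{-c_i}$ over tuples with $\prod p_i^{c_i}>K$ and $0\le c_i\le a_i$; dropping the upper constraint only enlarges the sum to $S_k(K)$.

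For the base case $k=1$, let $c_0$ be the least integer with $p_1^{c_0}>K$, so $p_1^{-c_0}<1/K$. Summing the geometric tail gives $S_1(K)=\frac{p_1\cdot p_1^{-c_0}}{p_1-1}<\frac{1}{K}\cdot\frac{p_1}{p_1-1}$, which matches the bound since $(\log_2 K+2)^0=1$.

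For the inductive step, split the outer sum at the threshold $c_0=\lfloor\log_{p_k}K\rfloor$, chosen so that $K p_k^{-c_k}\ge 1$ for $c_k\le c_0$ and $K p_k^{-c_k}<1$ for $c_k\ge c_0+1$. In the tail $c_k\ge c_0+1$ the inner sum over $(c_1,\dots,c_{k-1})$ is exactly $\prod_{i<k}\frac{p_i}{p_i-1}$, and after summing the geometric tail in $p_k^{-c_k}$ and using $p_k^{c_0+1}>K$ to write $p_k^{-c_0}<p_k/K$, the tail contribution is less than $\frac{1}{K}\prod_{i=1}^k\frac{p_i}{p_i-1}$. In the range $0\le c_k\le c_0$, the inductive hypothesis applied to $S_{k-1}(K p_k^{-c_k})$ (valid because $K p_k^{-c_k}\ge 1$) combined with the outer factor $p_k^{-c_k}$ yields a contribution of at most
$$
\frac{1}{K}\prod_{i<k}\frac{p_i}{p_i-1}\sum_{c_k=0}^{c_0}(\log_2 K-c_k\log_2 p_k+2)^{k-2}.
$$
Since $\log_2 p_k\ge 1$ and $c_0+1\le\log_2 K+1$, the latter sum is at most $(\log_2 K+1)(\log_2 K+2)^{k-2}$.

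Assembling the two pieces and writing $L=\log_2 K+2\ge 2$, the induction closes provided
$$
(L-1)L^{k-2}+\tfrac{p_k}{p_k-1}\;\le\;\tfrac{p_k}{p_k-1}\,L^{k-1},
$$
which rearranges to the elementary inequality $p_k\le L^{k-2}(L+p_k-1)$. For $K\ge 1$ one has $L\ge 2$, hence $L^{k-2}(L+p_k-1)\ge L+p_k-1\ge p_k+1>p_k$. There is no genuine obstacle here; the only care needed is the bookkeeping of constants so that the tail term (which itself already matches the shape of the desired bound with a deficit of one logarithmic factor) can be absorbed into the main term via this final elementary inequality.
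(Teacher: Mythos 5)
Your proof is correct but takes a genuinely different route from the paper's. After the same initial observation that $(q_m,q_n)=q_m$ and the change of variables $c_i=a_i-b_i$, you drop the constraint $c_i\leq a_i$ and reduce the claim to bounding the unconstrained lattice sum $S_k(K)$, which you then attack by induction on $k$, peeling off the last coordinate. The paper instead works directly in the poset of divisors: it introduces ``maximal'' divisors $q_m$ of $q_n$ (those with $q_nq_m^{-1}>K$ that have no proper multiple among the admissible divisors), bounds the contribution of each cone $\{q_l: q_l\mid q_m\}$ by $\frac{1}{K}\prod_i\frac{p_i}{p_i-1}$ using $q_m/q_n<1/K$ and the full Euler product, and then counts the maximal divisors. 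That count is done by identifying maximal divisors with minimal integer solutions of $\sum_i a_i\log p_i>\log K$, showing such solutions satisfy $\sum a_i\leq\log_2 K+1$, and injecting them into compositions of $\lfloor\log_2K\rfloor+1$ into $k$ nonnegative parts, giving the binomial bound $\binom{\lfloor\log_2K\rfloor+k}{k-1}\leq(\lfloor\log_2K\rfloor+2)^{k-1}$. Your inductive argument trades that combinatorial structure for a clean split at the threshold $c_0=\lfloor\log_{p_k}K\rfloor$ (above which the inner sum telescopes to the full Euler product and contributes at most $\frac{1}{K}\prod_i\frac{p_i}{p_i-1}$, below which the inductive hypothesis applies because $Kp_k^{-c_k}\geq 1$) and closes with the elementary inequality $p_k\leq L^{k-2}(L+p_k-1)$ for $L=\log_2K+2\geq 2$. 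Both arguments are elementary and short; the paper's version makes visible the geometry of the constraint region and gives the slightly sharper $\lfloor\log_2K\rfloor$ in place of $\log_2K$, while yours is more self-contained and easier to verify line by line since every step is a direct computation. I checked the one delicate point in your closing inequality and the rearrangement is indeed correct, with $L^{k-2}(L+p_k-1)\geq L+p_k-1\geq p_k+1>p_k$ for all $k\geq 2$ and $L\geq 2$.
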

\begin{proof}
The statement  is obviously true when $n=1$. Observe that the condition that $q_m|q_n$ and $q_m K<q_n$ is equivalent to
\begin{equation} \label{product_K}
\frac{q_n}{q_m}\equiv \prod_{i=1}^k p_i^{a_i} > K
\end{equation}
for some integers $a_1,\ldots,a_k\geq 0$. Since $n$ is fixed, the $k$--tuple $(a_1,\ldots,a_k)$ depends exclusively on $m$.  We will say that the divisor
 $q_m $ of $q_n$ is  \emph{maximal} if and only if \eqref{product_K} holds and for any other divisor $q_l $ of $q_n$ such that $q_lK< q_n$ and $q_m|q_l$ we necessarily have that $q_m=q_l$.   It then follows that
 \begin{eqnarray*}
\sum_{\substack{q_m | q_n\\q_m K<q_n}}\frac{(q_m,q_n)}{q_n}  & \leq & \sum_{q_m \text{maximal}} \ \sum_{q_l|q_m}\frac{(q_l,q_n)}{q_n}
 \ = \  \sum_{q_m \text{maximal}} \ \sum_{q_l|q_m}\frac{q_l}{q_n} \\[3ex]
&=& \sum_{q_m \text{maximal}} \ \sum_{q_l|q_m}\frac{q_l}{q_m}\frac{q_m}{q_n} 
 \ \leq \  \sum_{q_m \text{maximal}} \ \frac{1}{K}\sum_{q_l|q_m}\frac{q_l}{q_m} \\[3ex]
& \leq & \sum_{q_m \text{maximal}} \ \frac{1}{K}  \sum_{c_1=0}^{\infty}\cdots\sum_{c_k=0}^{\infty} \frac{1}{p_1^{c_1}\cdots p_k^{c_k}}  \\[2ex]
& \leq & \sum_{q_m \text{maximal}} \ \frac{1}{K}\prod_{i=1}^{k}\frac{p_i}{p_i-1} \, , \\[2ex]
\end{eqnarray*}
 where the outer sum on the right hand side is over all maximal divisors $q_m$ of  $q_n$.  Thus,  the proof of the lemma is reduced to showing that  the number of such  divisors is bounded above by  $\left(\lfloor\log_2 K\rfloor+2\right)^{k-1}$; that is
 \begin{equation} \label{paris}
\sum_{q_m \text{maximal}}  \!\!\!\! 1 \ \leq \ \left(\lfloor\log_2 K\rfloor+2\right)^{k-1} \, .
\end{equation}

With this in mind, observe that~\eqref{product_K} gives
\begin{equation} \label{sum_K}
\sum_{i=1}^k a_i\log p_i > \log K \,
\end{equation}

\noindent and  so $q_m$ is maximal if and only if the corresponding solution $(a_1,\dots,a_k)\in\Z_{\geq 0}^k$ to inequality~\eqref{sum_K} is \emph{minimal}, in the sense that for any other solution $(b_1,\dots,b_k)\in\Z_{\geq 0}^k$ with $b_i\leq a_i$ for all $i=1,\dots,k$, we necessarily have that $b_i=a_i$  for all $i=1,\dots,k$.    It is easily versified that if  $(a_1,\dots,a_k)$ is a minimal solution to \eqref{sum_K}, then
 \begin{equation} \label{paris2}
 a_1+\dots+a_k\leq \log_2 K+1.
 \end{equation}
Indeed,  to show that this is so, assume on the contrary  that $(a_1,\dots,a_k)$ is minimal and $a_1 + \ldots + a_k > \log_2K + 1$. Then, without loss of generality, we may assume $a_1 \geq 1$. Then
$$ (a_1 - 1)\log p_1 + a_2\log p_2 + \ldots + a_k\log p_k \geq (a_1 - 1) + a_2 + \ldots + a_k \geq \log_2 K .
$$
This means that $(a_1-1, a_2, \ldots, a_k)$ is a solution to  \eqref{sum_K} and thus contradicts the fact that $(a_1,\ldots, a_k)$ is minimal.

\medskip

By definition, \eqref{paris} is equivalent to the statement that number of minimal solutions to~\eqref{sum_K} is bounded above by $\left(\lfloor\log_2 K\rfloor+2\right)^{k-1}$.
This we now proceed to prove. Define the map from the set of minimal solutions $(a_1,\dots,a_k)\in\Z_{\geq 0}^k$  of~\eqref{sum_K} to the set of $k$-tuples $(b_1,\ldots, b_k)\in\Z_{\geq 0}^k$  satisfying $ b_1+\ldots+b_k = \lfloor\log_2 K\rfloor+1$ by
\begin{equation} \label{map_to_splits}
(a_1,\dots,a_k)\to  (b_1,\ldots, b_k):= \left(a_1,\dots,a_{k-1}, \lfloor\log_2 K\rfloor+1-\sum_{i=1}^{k-1}a_i\right).
\end{equation}
In view of \eqref{paris2}, $b_k \geq 0$ and so the map is well-defined. The map is also injective. Indeed, assume $(x_1,\dots,x_k)$ and $(y_1,\dots,y_k)$ are two distinct minimal solutions to \eqref{sum_K} with the same image under the map~\eqref{map_to_splits}. Then necessarily $x_i=y_i$ for all $i=1,\dots,k-1$, and since the solutions are distinct, either $x_k < y_k$ or $x_k>y_k$. This means that one of the solutions is not minimal, which is a contradiction. Thus the map defined via~\eqref{map_to_splits} is injective, whence the number of minimal solutions to~\eqref{sum_K} is at most equal to the number of $k$-tuples satisfying $ b_1+\ldots+b_k = \lfloor\log_2 K\rfloor+1$; namely
$$ \binom{\lfloor\log_2 K\rfloor+k}{k-1}=\frac{\left(\lfloor\log_2 K\rfloor+k\right)\cdot\dots\cdot \left(\lfloor\log_2 K\rfloor+1\right)}{(k-1)!}\leq \left(\lfloor\log_2 K\rfloor+2\right)^{k-1}. $$
This  thereby completes the proof of the lemma.

\end{proof}

\begin{proof}[Proof of Theorem~\ref{thm_gcd}]
As already mentioned,  it suffices to proves the theorem with $ \ca{A} = \cA_{\cS}$. With this in mind,   we are  given  an integer $n \ge 2 $ and so this fixes
\begin{equation} \label{def_q_n}
q_n=\prod_{i=1}^k p_i^{a_i}  \hspace{6mm} (a_1,\ldots,a_k\geq 0).
\end{equation}
For any integer $m<n$,  we write
\begin{equation} \label{def_q_m}
q_m=\prod_{i=1}^k p_i^{b_i}  \hspace{6mm} (b_1,\ldots,b_k\geq 0) \, ,
\end{equation}
where the exponents $b_1,\ldots,b_k$ depend on the index $m$. Obviously,  since $m < n$ \begin{equation} \label{logexpon}
\sum_{i=1}^{k} b_i\log p_i < \sum_{i=1}^{k} a_i\log p_i \,  ,
\end{equation}
and for each such $m $ we set
$$ \cP(q_m):=\left\{ 1\leq i \leq k :  b_i < a_i \right\} . $$

\noindent It then follows that
\begin{eqnarray*}
\sum_{m=1}^{n-1}\frac{(q_m,q_n)}{q_n} &=& \mathop{\mathop{\sum}_{(b_1,\ldots,b_k) \in \Z_{\geq 0}^k  : }}_{ \text{\eqref{logexpon} holds}}  \ \ \frac{p_1^{\min(a_1,b_1)}\cdots p_k^{\min(a_k,b_k)}}{p_1^{a_1}\cdots p_k^{a_k}} \\[2ex]
&=& \sum_{\cT \subseteq \{1, \ldots,k \}}  \quad  \mathop{\mathop{\mathop{\sum}_{(b_1,\ldots,b_k) \in \Z_{\geq 0}^k  :}}_{ \text{\eqref{logexpon} holds $\& $ }} }_{\text{ }\cP(q_m)=\cT}   \ \ \frac{\prod_{i\in \cT}p_i^{b_i}\cdot \prod_{i\notin \cT}p_i^{a_i} }{\prod_{i\in \cT}p_i^{a_i} \cdot \prod_{i\notin \cT}p_i^{a_i} }  \, . \\
\end{eqnarray*}
In view of the fact that \eqref{logexpon} is imposed as a condition on the inner sum, we can assume that $\cT \neq \emptyset$. With this in mind, it follows that

\begin{eqnarray*}
\sum_{m=1}^{n-1}\frac{(q_m,q_n)}{q_n} &\leq &
 \sum_{\cT \subset \{1, \ldots,k \}}  \quad   \sum_{b_i\geq a_i \,  : \  i\notin \cT}   \quad     \mathop{\mathop{\sum}_{ b_i< a_i \, : \ i\in \cT \ \text{$\&$}  }}_{ \prod_{i\in\cT}p_i^{b_i}\cdot K < \prod_{i\in\cT} p_i^{a_i}}   \frac{\prod_{i\in\cT}p_i^{b_i}}{ \prod_{i\in\cT}p_i^{a_i}}   \\[3ex]
&  &  \qquad  \qquad  +   \  \quad
\mathop{\mathop{\sum}_{ b_i< a_i \, : \  i\in \cT= \{1, \ldots,k \} \ \text{$\&$}   }}_{ \prod_{i\in\cT}p_i^{b_i}  < \prod_{i\in\cT} p_i^{a_i}}  \ \ \frac{\prod_{i\in\cT}p_i^{b_i}}{ \prod_{i\in\cT}p_i^{a_i}}  \  ,
\end{eqnarray*}
where
$ K:= \prod_{i\notin\cT}p_i^{b_i-a_i}  \, . $  Now on appealing to Lemma~\ref{lem_tech_subsum} and then Lemma~\ref{lem_tech_log}, we find that
\begin{eqnarray*}
\sum_{m=1}^{n-1}\frac{(q_m,q_n)}{q_n} &\leq &
 \sum_{\cT \subset \{1, \ldots,k \}}  \quad   \sum_{b_i\geq a_i \,  : \  i\notin \cT}   \
\frac{\left(\log_2 K +2\right)^{k-1}}{K}\prod_{i\in\cT}\frac{p_i}{p_i-1}   \\[1ex]
&  &  \hspace*{10ex} +   \  \quad  2^{k-1}   \prod_{i=1}^k\frac{p_i}{p_i-1}  \\[2ex]
& \leq  & \sum_{\cT \subset \{1, \ldots,k \}}  \quad   \sum_{b_i\geq a_i \,  : \  i\notin \cT}   \
\frac{\left(2 \log_2 K \right)^{k-1}}{K}   \ \cdot  \  2^k   \quad + \quad  2^{k-1} \cdot 2^k \\[3ex]
& \leq  &  2^{k-1} \cdot 2^k \Big( C_{k-1}  \sum_{\cT \subset \{1, \ldots,k \}} \!\!\!\! 1 \   +  \ 1 \Big)  \; ,
\end{eqnarray*}
where  $C_{k-1}>0$ is the constant associated with Lemma~\ref{lem_tech_log}.
This together with the fact that there  are at most  $2^k$ different subsets
 $\cT$ of  $\{1, \ldots,k \}$ implies the desired statement.
\end{proof}


\section{Appendices \label{app}}

\subsection*{Appendix A:  Theorem DEL $ \Longrightarrow $ Corollary DEL }
\stepcounter{subsection}

\noindent The goal  is to deduce  Corollary DEL from   Theorem DEL -- the fundamental theorem of Davenport, Erd\"os $\&$ LeVeque in the theory of uniform distribution.

\medskip

 We are given that  $\ca{A}=(q_n)_{n \in \N}$ is a  lacunary sequence of natural numbers.   Thus, there exists a constant $K>1$ such that any  integers $m<n$
\begin{equation} \label{lacunarityratio}
q_n - q_m\, =\, q_m\left(\frac{q_n}{q_m} - 1 \right)\, \geq \,  K^{n-m} -1 \, .
\end{equation}
Now let $f$ be as in Corollary DEL and consider the associated function $F : \N \to  \R^+ $ given by $$ F(n) := f(K^{n} - 1) \, .   $$

\noindent Note that $F $ is  decreasing  (since $f$ is decreasing) and so it follows that  the convergence condition \eqref{lacdelsum} is equivalent to the condition that
\begin{equation}\label{lacdelsumequiv}
\sum_{n=1}^{\infty}\frac{F(n)}{n}\, <\, \infty \, .
\end{equation}
Also, by the decay condition   \eqref{lacdelmu} and the fact that  $\widehat{\mu}(0)=1$,  it follows that  for any integer $h\neq 0$
\begin{eqnarray*}
\sum_{m,n=1}^{N}\widehat{\mu}(h(q_n-q_m)) \, &=& \, \sum_{n=1}^{N}\widehat{\mu}(0) \, + \ 2 \!\! \mathop{\sum\sum}_{1\leq m<n\leq N}\widehat{\mu}(h(q_n-q_m)) \\[2ex]
&\stackrel{\eqref{lacdelmu}}{\ll}  & N \,+\, \mathop{\sum\sum}_{1\leq m<n\leq N}f\big(|h(q_n-q_m)|\big) \\[2ex]
& \leq & N \,+\, \mathop{\sum\sum}_{1\leq m<n\leq N}f(q_n-q_m) \\[2ex]
& \stackrel{\eqref{lacunarityratio}}{\leq}  & N\, + \, \mathop{\sum\sum}_{1\leq m<n\leq N}F(n-m) \\[2ex]
& \leq & N \, + \, \sum_{n=1}^{N}\sum_{m=1}^{n}F(m) 
 \ =  \ N \,+\, \sum_{n=1}^{N}(N+1-n)F(n)  \\[2ex]
& \ll &  N\sum_{n=1}^{N}F(n) \, .
\end{eqnarray*}

\noindent The upshot of this is that
\begin{eqnarray*}
\sum_{N=1}^{\infty}\frac{1}{N^3}\sum_{m,n=1}^{N}\widehat{\mu}(h(q_n-q_m))  & \ll  & \sum_{N=1}^{\infty}\frac{1}{N^2}\sum_{n=1}^{N}F(n) = \sum_{n=1}^{\infty}\sum_{N=n}^{\infty}\frac{F(n)}{N^2}  \\[2ex]
& \ll  &  \sum_{n=1}^{\infty}\frac{F(n)}{n}  \ \stackrel{\eqref{lacdelsumequiv}}{<}  \ \infty  \, .
\end{eqnarray*}
Thus,  Theorem DEL  implies that the sequence $\ca{A}=(q_n)_{n \in \N}$ is uniformly distributed modulo one for $\mu$--almost all $x\in F$.  This completes the proof of Corollary DEL.

\subsection*{Appendix B: The growth rate of $\cA_{\cS}$}
\stepcounter{subsection}

The goal is to  show the sequence  $\cA_{\cS}=(q_n)_{n\in \N}$ defined by \eqref{kprimes}, satisfies the growth condition
\begin{equation*} \label{growthrateofqn}
\log q_n \,  >  \,  C  \,  n^{1/k}    \qquad \forall \ n \ge 2 \, ,
\end{equation*}
where $k$ is the cardinality of the finite set  $\cS:=\{p_1, \ldots,p_k\}  $ of distinct primes and   $C= (\log 2)/2$.  It is easily seen that this is an immediate consequence of the following counting statement: for  $X \ge2$
\begin{equation} \label{countingrate}
\pi_{\cS}(X) : = \#\left\{ q\in \cA_{\cS} : q\leq X \right\}  \ \leq  \  C^{-1} (\log X)^k   \, .
\end{equation}

\noindent Indeed,  given   $ q_n \in \cA_{\cS} $ with $n \ge 2$, put  $X = q_n$.  Then $ X \ge 2$ and  \eqref{countingrate} implies  that
$$ \pi_{\cS}(q_n)  \, =  \,  n \leq C^{-1} (\log q_n)^k \, $$
and we are done since $C^{1/k}  \ge C$.

We prove \eqref{countingrate} by induction on $k$. When $k=1$,  we have only one prime  $p$ and by definition   $\pi_{\cS}(X) : = \#\left\{ p^{n-1} : p^{n-1} \leq X \right\}$.   The condition that $ p^{n-1} \leq X  $ implies that
$$
n \le \frac{\log X}{\log p}  + 1  \ \le \ \    \frac{2}{\log 2} \, \log X   \,  =  \,  C^{-1} \, \log X \ .
$$
This verifies \eqref{countingrate} when $k=1$. Now assume \eqref{countingrate} is true for any set of $k$ distinct primes and let $\cS=\{p_1,\ldots,p_k,p_{k+1}\}$ be a set of $k+1$ distinct primes.  Write $\cS_k$ for the set $  \{p_1,\ldots,p_k\}$ of $k$ distinct primes. It follows that
\begin{eqnarray*}
\pi_{\cS}(X) &:=& \mathop{\sum_{q\leq X}}_{q\in \cA_{\cS}}1 \, = \,  \mathop{\sum_{ (a_1,\ldots,a_{k+1}) \in \Z_{\geq 0}^{k+1}  : \,      }}_{\prod_{i=1}^{k+1} p_i^{a_i } \leq X} \hspace{-9mm} 1  \hspace{9mm} = \  \mathop{\sum_{ (a_1,\ldots,a_{k+1}) \in \Z_{\geq 0}^{k+1}  : \,      }}_{\sum_{i=1}^{k+1} a_i\log p_i \leq \log X} \hspace{-9mm} 1  \\[3ex]
& \leq & \sum_{0\le a_{k+1}\leq\frac{\log X}{\log p_{k+1}}}  \quad  \mathop{\sum_{ (a_1,\ldots,a_{k}) \in \Z_{\geq 0}^{k}  : \,      }}_{\sum_{i=1}^{k} a_i\log p_i \leq \log X} \hspace{-9mm} 1   \hspace{9mm} =    \quad  \sum_{0\le a_{k+1}\leq\frac{\log X}{\log p_{k+1}}} \hspace{-4mm} \pi_{\cS_k}(X)    \\[2ex]
& \leq & \sum_{0\le a_{k+1}\leq\frac{\log X}{\log p_{k+1}}}  \hspace{-4mm} C^{-1}  (\log X )^{k} \hspace{6mm} \text{ (by the induction hypothesis)} \\[2ex]
& \leq & \frac{C^{-1}}{\log p_{k+1}}(\log X)^{k+1}  \, <  \,  C^{-1}  \, (\log X)^{k+1} \, .
\end{eqnarray*}
This completes the inductive step and establishes \eqref{countingrate} for arbitrary $k \in \N$.

\subsection*{Appendix C: Example of `bad' sequences satisfying Property~D \label{propDbad}}
\stepcounter{subsection}

The goal is to  construct an increasing sequence $\cA= (q_n)_{n\in \N} $  of natural numbers satisfying Property D and an associated function $\psi:\N \rightarrow \I$ such that for all integers $ N\geq N_0$
\begin{equation}\label{bald}
\sum_{j=1}^N\psi(q_j)\sum_{m=1}^j\frac{(q_m,q_j)}{q_j}>\exp\left(c \sum_{j=1}^N\psi(q_j)\right) \, .
\end{equation}
Here $c>0$ and $N_0  \geq 1 $ are absolute constants.  This `strongly'  implies the claim
associated with \eqref{baden}  in Remark \ref{remhypF}.  Thus with reference to Theorem \ref{mainSV}, for arbitrary sequences satisfying Property~D, we can not  reduce \eqref{countFSPresultsv} to \eqref{countFSPresult}
as in the situation when $\ca{A} \subseteq \cA_{\cS}$.

\bigskip

\noindent {\em Step 1: Constructing the sequence $\cA$. \ } To start with, let  $(n_t)_{t\in\N}$ be an increasing sequence of natural numbers  satisfying the following conditions:

\begin{enumerate}
\item[$\bullet$] The integer $n_1$ is large enough so that
\begin{equation} \label{regular_2}
\ln\ln n_1>2\ln 2+1  \,
\end{equation}
and
\begin{equation} \label{regular_1}
2\log n +2\log\log n < n^{1/5}   \qquad \forall \ \ n \geq n_1 \, .
\end{equation}
\item[$\bullet$]
For all $t\in\N$,
\begin{equation} \label{lb_step}
2n_{t}\leq n_{t+1}.
\end{equation}
\end{enumerate}

\noindent It can be easily verified that \eqref{regular_1} implies \eqref{regular_2} but  it will be useful to have both explicitly stated.  Also,  note that in view of \eqref{regular_2} and~\eqref{lb_step}, it follows that
\begin{equation} \label{bu}
n_{t} \geq 2^{t-1}n_1\geq 2^{t-1} e^{4e}   \qquad \ \forall \ \    t \in \N \, .
\end{equation}
In particular, this implies
\begin{equation} \label{bu2}
\ln n_{t} \geq t\ln 2 + 4e - \ln 2 > (t+14)\ln 2.
\end{equation}

\vspace*{2ex}

\noindent Next, let  $\cP$ denote the set of all prime numbers and for $t\in\N$,  let
$$
\cP_t:=\left\{ p\in\cP \, : \,  3\leq p\leq n_t\log n_t \right\}.
$$
It follows from Rosser's theorem~\cite{Rosser1939}, that
\begin{equation} \label{ub_Rosser}
\# \cP_t  \,  <  \, n_t.
\end{equation}
Also a simple consequence of the well known lower bound estimate 
$$
\sum_{\substack{p \in \cP \\ p \leq n}} \frac{1}{p} \, \geq  \,  \ln\ln ( n+1) - \ln (\pi^2/6)   \,  ,
$$
is that
\begin{equation} \label{lb_sum_reciprocal_primes}
\sum_{p\in\cP_t}\frac{1}{p} \, \geq \, \ln\ln n_t-\ln 2-\frac12.
\end{equation}

\vspace*{2ex}

Now,   for each $t\in\N$ define
$$
\tilde{q}_t:=2^{n_t}  \, ,
$$
and in turn,  for any $p\in \cP_t $ let
\begin{equation} \label{def_tilde_q_t_p}
\tilde{q}_{t,p}:=  \, \tilde{q}_t \; 2^{-u_p-1} \, p  \, = \, 2^{n_t-u_p-1} \, p  \,
\end{equation}
where $
u_p:=\lfloor\log_2 p\rfloor  \, .
$ Then,  by definition
$
2^{u_p}\leq p<2^{u_p+1}
$
and it follows that
\begin{equation} \label{double_ie_tilde_q}
\frac12 \tilde{q}_t < \tilde{q}_{t,p}<\tilde{q}_t.
\end{equation}
Also, in view of~\eqref{regular_1} and \eqref{regular_2} we have that for every $t\in\N$ and $p\in\cP_t$
\begin{equation} \label{p_is_small}
\log p\leq \log n_t + \log\log n_t\leq \frac{ n_t^{1/5}}{2} =  \frac{\left(\log\tilde{q}_{t}\right)^{1/5}}{2(\log 2)^{1/5}}
\leq\left(\log\tilde{q}_{t,p}\right)^{1/5}.
\end{equation}
Trivially, the above upper bound estimate also holds for $p =2$.
Also note that since $q_1 \geq 2$,  for every $t \in \N$
$$
(\log  2)^5 < \log 2  \, \leq \, \log\tilde{q}_{1} \, \leq \, \log\tilde{q}_{t}
$$
and so
\begin{equation} \label{correct}
\log 2  <  \left(\log\tilde{q}_{t}\right)^{1/5}.
\end{equation}

\medskip

\noindent The desired sequence $ \cA := (q_j)_{j\in\N}$ is  precisely the elements of the set
$$
\{\tilde{q}_t  : t \in \N \} \ \cup \ \left\{\tilde{q}_{t,p}\mid t \in \N, \,   p\in\cP_t\right\}
$$
listed in increasing order of size.

\medskip

\noindent{\em Step 2: Verifying $\cA$ satisfies  Property~D.  \ }  
 By construction, each element of $\cA$ trivially has at most two prime divisors. Also, in view of~\eqref{p_is_small}  and~\eqref{correct},  any prime divisor $p$ of an element $q_j \in \cA$ satisfies
\begin{equation} \label{ub_prime_divisors}
\log p\leq \left(\log q_j\right)^{1/5}.
\end{equation}
This verifies  condition~(ii) of Property D with $D=2$.
It now remains to verify condition~(i) of Property D.
By construction, every element of the sequence $(q_j)_{j\in\N}$ is either equal  to $\tilde{q}_t$ for some $t \in \N$ or equal to  $\tilde{q}_{t,p}$ for some $t\in\N$ and $p\in\cP_t$.  Denote by $\pi$ the bijective map from the set of integers $ j \in \N$ to the set of couples $(t,p)$ with $t\in\N$ and $p\in\left(\cP_t\cup\{2\}\right)$ so that
$$
q_j=\tilde{q}_{\pi(j)}  := \, \tilde{q}_{t,p} ,
$$
Here and throughout, we use the notation $\tilde{q}_{t,2}:=\tilde{q}_{t}$.
Note that for any $t \in \N$, we have that $ \tilde{q}_{t,p} \leq \tilde{q}_{t} $ for every $p\in\left(\cP_t\cup\{2\}\right)$. Thus for any $ j \in \N$, given  $ \pi(j) = (t,s) $ it follows that
\begin{equation} \label{gc_1}
j \, \leq \, \sum_{k=1}^{t} \# \cP_k \, \stackrel{\eqref{ub_Rosser}}{<} \, \sum_{k=1}^{t}n_k   \, \stackrel{\eqref{lb_step}}{<} \, 2n_t  \, .
\end{equation}
On the other hand,
$$
q_j=\tilde{q}_{t,p}   \,  \stackrel{\eqref{double_ie_tilde_q}}{\geq} \,  \frac12\tilde{q}_t
$$
and so
\begin{equation} \label{gc_2}
\log q_j\geq\log \left(\tilde{q}_t/2\right)\geq n_t-1\geq n_t/2.
\end{equation}

\noindent On combining~\eqref{gc_1} and~\eqref{gc_2}, we obtain that
\begin{equation} \label{gc_result}
\log q_j>j/4.
\end{equation}
In other words, $\cA$ satisfies the growth condition  \eqref{hyp_q}  with   $B = 1$  and $C=1/4$. This verifies
condition~(i) of Property D.

\bigskip

\noindent {\em Step 3: A useful gcd estimate.  \ } Let $j\in\N$ be such that $\pi(j)=(t,2)$ for some $t\in\N$; that is,  $q_j=\tilde{q}_{t}=2^{n_t}$.
Note that for any $t \in \N$, we have that $ \tilde{q}_{t,p} < \tilde{q}_{t} $ for every $p\in\cP_t$.
Hence,
$$
\begin{aligned}
\sum_{m=1}^j\frac{(q_m,q_j)}{q_j}
&\geq \ \sum_{p\in\cP_t}\frac{(\tilde{q}_{t,p},\tilde{q}_t)}{\tilde{q}_t} \ \geq \ \sum_{p\in\cP_t}\frac{2^{n_t-u_p-1}}{2^{n_t}} \ = \ \frac12\sum_{p\in\cP_t}\frac{1}{2^{u_p}}\\[3ex]
&\geq \ \frac12\ \sum_{p\in\cP_t}\frac{1}{p} \  \stackrel{\eqref{lb_sum_reciprocal_primes}}{\geq}
\ \frac12\ln\ln n_t-\frac12\ln2- \frac{1}{4} \  \stackrel{\eqref{regular_2}}{\geq} \ \frac14\ln\ln n_t  \, .
\end{aligned}
$$
The upshot of this is that whenever  $j \in \N$ is such that $q_j=\tilde{q}_t$ for some $t \in \N$, then
\begin{equation} \label{gcd_lb}
\sum_{m=1}^j\frac{(q_m,q_j)}{q_j}\geq\frac14\ln\ln n_t.
\end{equation}

\bigskip

\noindent {\em Step 4: Constructing the function $\psi$. \ } Working with the sequence $\cA=(q_j)_{j\in\N}$ coming from Step~1,  the goal is to construct a suitable function $\psi$ so that \eqref{bald} is satisfied.   To begin with we split  $\cA$ into two classes.   We define $\cI_1$ to be the set of indices $j\in\N$ such that $\pi(j)=(t,p)$ for some $t\in\N$ and  $p\in\cP_t$. In other words, $j \in \cI_1$ if and only if the corresponding element $q_j \in \cA $ is not a power of $2$. We let  $\cI_2 := \N \setminus \cI_1$.  Thus,  $\cI_2$ is the set  of  indices $j\in\N$ such that $\pi(j)=(t,2)$ for some $t\in\N$.
For any index $j\in\cI_2$, in order to emphasize the dependence on $j$, let us denote by $t_j$ the unique  integer associated with  $ \pi(j)$.  Thus, by definition
$$
q_j=\tilde{q}_{t_j}=2^{n_{t_{j}}}.
$$
Note that in view of  \eqref{gcd_lb}, for any  $j\in\cI_2$
\begin{equation} \label{lb_gcd_concrete}
\sum_{m=1}^j\frac{(q_m,q_j)}{q_j} \, \geq  \, \frac14\ln\ln n_{t_j}  \stackrel{\eqref{bu2}}{>} \  \frac14 \ln\left( (t_j+14)\ln 2 \right) \,  .
\end{equation}

\noindent  We define the function $\psi:\N \rightarrow \I$ on the sequence $\cA=(q_j)_{j\in\N}$ as follows:

\begin{enumerate}
\item[$\bullet$] For $j\in\cI_1$, we let $$\psi(q_j):=2^{-j}$$.
\item[$\bullet$]
For $j\in\cI_2$, we let
$$ \psi(q_j):=\left\{
\begin{array}{ll}
   1 & \mbox{if} \;\;\; t_j=1 \;
      ,\\[2ex]
  \frac{1}{t_j\log t_j} & \mbox{if} \;\;\; t_j \geq 2\; .
\end{array}\right.$$
\end{enumerate}

\vspace*{2ex}

\noindent First of all, note that
$$
\sum_{j\in\cI_1}\psi(q_j)   \,  \leq \,  \sum_{j\in  \N } 2^{-j} \, \leq  \, 1   \, .
$$

\noindent Then,
it follows that for any integer $N\in\cI_2$
\begin{eqnarray} \label{final_ub}
\sum_{j=1}^{N}\psi(q_j) & =  & \sum_{\substack{1\leq j\leq N\\j\in\cI_1}}\psi(q_j)\ +\sum_{\substack{1\leq j\leq N\\j\in\cI_2}}\psi(q_j) \,  \leq   \, 2 \ + \ \sum_{\substack{1\leq j\leq N\\j\in\cI_2}} \frac{1}{t_j \log t_j} \nonumber  \\[2ex]
& = & 2 +   \sum_{i=2}^{t_N} \frac{1}{i \log i} \ \ll \ \max \{1, \log\log t_N \}\, ,
\end{eqnarray}
where  $t_N$  is the unique  integer associated with  $ \pi(N) $ so that $q_N=\tilde{q}_{t_N}$. On the other hand, it follows that for any integer $N\in\cI_2$

\begin{eqnarray} \label{final_lb}
\sum_{j=1}^N\psi(q_j)\sum_{m=1}^j\frac{(q_m,q_j)}{q_j}
&> &
\sum_{\substack{1\leq j\leq N\\j\in\cI_2}}\psi(q_j)\sum_{m=1}^j\frac{(q_m,q_j)}{q_j}  \nonumber
\\[2ex]
&   \stackrel{\eqref{lb_gcd_concrete}}{\geq} & \ln (15 \ln 2)  \ + \  \sum_{\substack{2\leq j\leq N\\j\in\cI_2}}  \frac{ \ln (t_j\ln 2)}  {t_j\log t_j   }  \nonumber
\\[3ex]
& \gg & \sum_{\substack{1\leq j\leq N\\j\in\cI_2}}  \frac{ 1}  {t_j  }    \   =  \
\sum_{i=1}^{t_N}  \frac{1}{i}   \ \gg \  \max \{1,\log t_N  \}  \, .
\end{eqnarray}
This together with \eqref{final_ub} implies the desired estimate \eqref{bald} for any integer $N\in\cI_2$.  We now show that inequalities~\eqref{final_ub} and~\eqref{final_lb} are valid for any integer $N$ satisfying
\begin{equation} \label{N_is_large}
N\geq n_{100} \, .
\end{equation}
With this in mind, given such an $N$, define $k\in\N$ by
\begin{equation} \label{db_regularization_2}
n_k\leq \log_2 q_N <n_{k+1}.
\end{equation}

\noindent Now let $N_1$ be the integer such that $q_{N_1}=\tilde{q}_{k}=2^{n_k}$ and let $N_2$ be the integer  such that $q_{N_2}=\tilde{q}_{k+1}=2^{n_{k+1}}$. Note that by definition,  both $N_1,N_2\in\cI_2$ and  $t_{N_1} = k $ and $t_{N_2} = k+1 $. Also, in view of~\eqref{db_regularization_2}
$$
q_{N_1}\leq q_N< q_{N_2}   \, .
$$

\noindent Thus,
\begin{equation} \label{final_ub_2}
\sum_{j=1}^{N}\psi(q_j) < \sum_{j=1}^{N_2}\psi(q_j)  \stackrel{\eqref{final_ub}}{\ll} \log\log t_{N_2} =  \log\log (k+1) \ll \log\log k,
\end{equation}
where in the last  step  we use the fact that~\eqref{N_is_large} implies $k\geq 100$.  On the other hand, it follows that
\begin{equation} \label{final_lb_2}
\sum_{j=1}^N\psi(q_j)\sum_{m=1}^j\frac{(q_m,q_j)}{q_j}\ > \ \sum_{j=1}^{N_1}\psi(q_j)\sum_{m=1}^j\frac{(q_m,q_j)}{q_j}
\ \stackrel{\eqref{final_lb}}{\gg} \ \log t_{N_1} \, = \,  \log k.
\end{equation}
On combining~\eqref{final_ub_2} and~\eqref{final_lb_2} we obtain the desired inequality \eqref{bald} for all $N \geq N_0 := n_{100}$.

\subsection*{Appendix D: Some basic results on  sums of sequences  \label{elemseq}}
\stepcounter{subsection}

In this appendix, we collect together various elementary lemmas concerning sums of sequences that are used  at various points in the main body of the paper; in particular,  during the course of establishing  Lemma \ref{lem_sum_mu_good} and Propositions~\ref{indiethm1}~$\&$~\ref{indiethm4}.

\begin{lemA} \label{cor_cummulative_log_limit}
Let $(s_n)_{n\in\N}$ be a sequence of real numbers  contained in $\I$ and let
$$
S_n:=\sum_{k=1}^n s_k.
$$
Let $a,b\in\N$ with $2\leq a<b$ and  let $\gamma>0$.  Suppose  that
\begin{equation} \label{S_am1_geq_1}
S_{a-1}\geq \gamma.
\end{equation}
Then,
$$
\frac{\gamma}{\gamma+1}  \, \big( \log S_b-\log S_{a-1} \big)   \ \leq  \ \sum_{k=a}^b \frac{s_k}{S_k}  \ \leq \  \frac{1}{\gamma\log\left(\frac{\gamma+1}{\gamma}\right)}  \,  \big(  \log S_b-\log S_{a-1} \big) \, .
$$
\end{lemA}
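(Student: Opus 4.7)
The plan is to reduce everything to elementary estimates on the ratios $t_k := s_k/S_{k-1}$, which the hypotheses confine to $[0,1/\gamma]$, and then telescope.

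First I would observe that $(S_n)_{n\in\N}$ is nondecreasing, so \eqref{S_am1_geq_1} propagates: $S_{k-1}\ge S_{a-1}\ge\gamma$ for every $k\ge a$. Combined with $s_k\in[0,1]$ this yields $t_k\in[0,1/\gamma]$, and in particular $1\le S_k/S_{k-1}=1+t_k\le (\gamma+1)/\gamma$. The whole argument then hinges on two standard facts about $\log(1+t)$ for $t>0$: (i) the concavity bound $\log(1+t)\le t$; and (ii) the function $t\mapsto\log(1+t)/t$ is strictly decreasing on $(0,\infty)$, which follows because $\log(1+t)=\int_0^t(1+u)^{-1}\,du$ is the primitive of a positive decreasing function, so its slope from the origin decreases. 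Applied on the interval $(0,1/\gamma]$, (ii) gives
\[
\log(1+t)\;\ge\;\gamma\log\!\Big(\tfrac{\gamma+1}{\gamma}\Big)\,t,\qquad\text{equivalently}\qquad t\;\le\;\frac{1}{\gamma\log\!\big(\tfrac{\gamma+1}{\gamma}\big)}\log(1+t).
\]

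For the lower bound I would write $s_k/S_k=(S_{k-1}/S_k)(s_k/S_{k-1})$. Since $S_k/S_{k-1}\le(\gamma+1)/\gamma$ forces $S_{k-1}/S_k\ge\gamma/(\gamma+1)$, fact (i) then gives
\[
\frac{s_k}{S_k}\;\ge\;\frac{\gamma}{\gamma+1}\,t_k\;\ge\;\frac{\gamma}{\gamma+1}\,\log(1+t_k)\;=\;\frac{\gamma}{\gamma+1}\bigl(\log S_k-\log S_{k-1}\bigr).
\]
Summing from $k=a$ to $k=b$ telescopes the right-hand side to $\tfrac{\gamma}{\gamma+1}(\log S_b-\log S_{a-1})$, giving the claimed lower bound.

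For the upper bound I would use $S_k\ge S_{k-1}$ to get $s_k/S_k\le s_k/S_{k-1}=t_k$, and then apply (ii):
\[
\frac{s_k}{S_k}\;\le\;t_k\;\le\;\frac{1}{\gamma\log\!\big(\tfrac{\gamma+1}{\gamma}\big)}\log(1+t_k)\;=\;\frac{1}{\gamma\log\!\big(\tfrac{\gamma+1}{\gamma}\big)}\bigl(\log S_k-\log S_{k-1}\bigr).
\]
Telescoping again delivers the claimed upper bound. There is essentially no obstacle here; the only non-routine ingredient is fact (ii), and even that is standard. The design of the constants $\gamma/(\gamma+1)$ and $1/[\gamma\log((\gamma+1)/\gamma)]$ in the statement precisely matches the extremal values of $\log(1+t)/t$ and $(1+t)$ on the admissible range $t\in[0,1/\gamma]$, so the bounds are exactly what the method produces.
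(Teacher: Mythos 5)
Your proof is correct and follows essentially the same route as the paper: bound $s_k/S_k$ against $\log S_k - \log S_{k-1}=\log(1+t_k)$ by factoring through $t_k = s_k/S_{k-1}$, control the factor $S_{k-1}/S_k\in[\gamma/(\gamma+1),1]$, invoke the monotonicity of $\log(1+t)/t$ on the range $t\in(0,1/\gamma]$, and telescope. The paper phrases this as a two-factor bound on the ratio $(s_n/S_n)/(\log S_n-\log S_{n-1})$, but the decomposition, the key monotonicity fact, and the resulting constants are identical.
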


\vspace*{3ex}

\begin{proof}
For any integer  $n\geq a$, let
\begin{equation} \label{cor_cummulative_log_limit_def_a_n}
a_n:=\frac{s_n}{S_n}
\end{equation}
and
\begin{equation} \label{cor_cummulative_log_limit_def_b_n}
b_n:=\log S_n-\log S_{n-1} \,  =  \  \log(1+\frac{s_n}{S_{n-1}})  \, .
\end{equation}

\noindent The proof of the lemma will follow on showing that
\begin{equation} \label{ezsv}
\frac{\gamma}{\gamma+1}  \, b_n   \ \leq  \ a_n  \ \leq \  \frac{1}{\gamma\log\left(\frac{\gamma+1}{\gamma}\right)}  \,  b_n \, .
\end{equation}
First note that
 $b_n=0$ if and only if $s_n=0$, which, in turn, is true if and only if $a_n=0$. Thus, \eqref{ezsv} is trivially true  if $b_n=0$ . We can therefore  assume that $b_n>0$.
 Then,
$$
\frac{a_n}{b_n} \;  = \; \frac{\frac{s_n}{S_n}}{\log\left(1+\frac{s_n}{S_{n-1}}\right)} \, = \, \frac{S_{n-1}}{S_n} \cdot \frac{\frac{s_n}{S_{n-1}}}{\log\left(1+\frac{s_n}{S_{n-1}}\right)} \; .
$$

\noindent By~\eqref{S_am1_geq_1}, for all $n\geq a$ we have that
$$
\frac{\gamma}{\gamma+1}\leq\frac{S_{n-1}}{S_n}\leq 1 \,
$$
and together with the fact that the function $x\to\frac{x}{\log(1+ x)}$ is monotonically increasing for all $x>0$, it follows  that
$$
1 \leq\frac{\frac{s_n}{S_{n-1}}}{\log\left(1+\frac{s_n}{S_{n-1}}\right)} \leq \frac{1}{\gamma\log\left(\frac{\gamma+1}{\gamma}\right)}   \, .
$$
On  multiplying the last two double inequalities, we find that
$$
\frac{\gamma}{\gamma+1} \leq \frac{S_{n-1}}{S_n}\cdot\frac{\frac{s_n}{S_{n-1}}}{\log\left(1+\frac{s_n}{S_{n-1}}\right)} \leq \frac{1}{\gamma\log\left(\frac{\gamma+1}{\gamma}\right)} \, .
$$
In other words,
$$
 \frac{\gamma}{\gamma+1}  \, \leq  \,  \frac{a_n}{b_n} \, \leq  \, \frac{1}{\gamma\log\left(\frac{\gamma+1}{\gamma}\right)}   \,
$$
and  the desired statement \eqref{ezsv}  follows.
\end{proof}

\bigskip

\begin{lemA} \label{cor_sum_S_log}
Let $(s_n)_{n\in\N}$ and $S_n$ be  as in Lemma~\ref{cor_cummulative_log_limit}.  Let $\gamma>0$  and  let
$$
\tilde{S}_n:=\max\left(\gamma,S_n\right).
$$
Then, for any  $a,b\in\N$ with  $a <  b$, we have that
\begin{equation}  \label{cor_sum_S_log_result}
\sum_{k=a}^{b}\frac{s_k}{\tilde{S}_k}<1+\frac{1}{\gamma}+\frac{\log S_b-\log S_a}{\gamma\log\left(\frac{\gamma+1}{\gamma}\right)}.
\end{equation}
\end{lemA}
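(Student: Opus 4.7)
The plan is to split the sum at the first index where the cumulative sum $S_k$ crosses the threshold $\gamma$, so that on one piece the denominator $\tilde{S}_k$ equals the constant $\gamma$ and on the other piece it equals $S_k$, allowing direct application of Lemma~A1. Concretely, let $m$ be the smallest integer with $a \leq m \leq b$ and $S_m > \gamma$. I will treat the cases ``$m$ exists'' and ``$m$ does not exist'' separately; the latter (i.e.\ $S_b \leq \gamma$) is immediate, since then $\tilde{S}_k = \gamma$ throughout and
\[
\sum_{k=a}^{b} \frac{s_k}{\tilde{S}_k} \ = \ \frac{S_b - S_{a-1}}{\gamma} \ \leq \ \frac{S_b}{\gamma} \ \leq \ 1,
\]
which is bounded by the right-hand side of \eqref{cor_sum_S_log_result}.

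Assuming now $m$ exists, I split the sum into three parts: $k \in [a, m-1]$, the single term $k = m$, and $k \in [m+1, b]$. On the first range, $\tilde{S}_k = \gamma$ and $S_{m-1} \leq \gamma$ by minimality of $m$, so
\[
\sum_{k=a}^{m-1} \frac{s_k}{\gamma} \ \leq \ \frac{S_{m-1}}{\gamma} \ \leq \ 1.
\]
For the isolated term, $\tilde{S}_m = S_m > \gamma$ and $s_m \leq 1$, giving $s_m/\tilde{S}_m \leq 1/\gamma$. Finally, on $[m+1, b]$ we have $S_k > \gamma$, so $\tilde{S}_k = S_k$, and $S_m \geq \gamma$, which allows us to invoke the upper estimate of Lemma~\ref{cor_cummulative_log_limit} with starting index $m+1$:
\[
\sum_{k=m+1}^{b} \frac{s_k}{S_k} \ \leq \ \frac{\log S_b - \log S_m}{\gamma \log\!\left(\frac{\gamma+1}{\gamma}\right)}.
\]

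Combining the three estimates yields
\[
\sum_{k=a}^{b} \frac{s_k}{\tilde{S}_k} \ \leq \ 1 + \frac{1}{\gamma} + \frac{\log S_b - \log S_m}{\gamma \log\!\left(\frac{\gamma+1}{\gamma}\right)}.
\]
To conclude, I will use $a \leq m$ and monotonicity of $S_n$ to obtain $S_a \leq S_m$, hence $\log S_b - \log S_m \leq \log S_b - \log S_a$, which gives exactly \eqref{cor_sum_S_log_result}. There is no real obstacle here: the only minor care is in choosing the splitting index $m$ so that both the ``small $S_k$'' piece contributes at most $1$ and the ``large $S_k$'' piece is accessible to Lemma~A1, with the single boundary term $k = m$ absorbing the extra $1/\gamma$.
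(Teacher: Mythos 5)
Your proof is correct and follows essentially the same strategy as the paper: both locate the threshold index $m$ at which the running sum first exceeds $\gamma$, bound the piece where $\tilde{S}_k = \gamma$ trivially, and apply Lemma~\ref{cor_cummulative_log_limit} to the tail where $\tilde{S}_k = S_k$. The only cosmetic differences are that you define $m$ within the window $[a,b]$ (so there are two cases instead of the paper's three) and you isolate the boundary term $k=m$ rather than absorbing it into the first sum; the final comparison $S_m \geq S_a$ plays the same role as the paper's $S_{m+1} \geq S_a$.
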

\begin{proof}
Denote by $m \in \N $ the smallest integer such that $\tilde{S}_{m}>\gamma$. This  implies that $S_m > \gamma$,  $S_n \leq  \gamma $ if  $n < m$  and
\begin{equation} \label{cor_sum_S_log_case1}
S_m=s_{m}+  S_{m-1} \leq \gamma+1.
\end{equation}

\noindent We now split the proof into three cases depending on the size of $m$.

\begin{itemize}
  \item[(i)] If  $m\leq a-1$, then it follows that  $a\geq 2$ and  that $S_{a-1}>\gamma$. Thus,  Lemma~\ref{cor_cummulative_log_limit} implies that
      $$
      \sum_{k=a}^{b}\frac{s_k}{\tilde{S}_k} \, \leq \, \sum_{k=a}^{b}\frac{s_k}{S_k} \, \leq \,   \frac{\log S_b-\log S_a}{\gamma\log\left(\frac{\gamma+1}{\gamma}\right)}  \,
      $$
      and this proves~\eqref{cor_sum_S_log_result}.
  \item[(ii)] If $m\geq b$, then it follows that  $S_b\leq\gamma+1$.  Hence,
\begin{equation} \label{cor_sum_S_log_case2}
\sum_{k=a}^{b}\frac{s_k}{\tilde{S}_k} \, =  \,  \sum_{k=a}^{b}\frac{s_k}{\gamma}  \, \leq  \,  \frac{S_b}{\gamma}  \, \leq \, 1+\frac{1}{\gamma}  \,
\end{equation}
and this proves~\eqref{cor_sum_S_log_result}.
  \item[(iii)]
If $a\leq m < b$,  the previous two cases can naturally be utilised to yield that
$$
\sum_{k=a}^{b}\frac{s_k}{\tilde{S}_k} \, = \, \sum_{k=a}^{m}\frac{s_k}{\tilde{S}_k} \, +\sum_{k={m+1}}^{b}\frac{s_k}{\tilde{S}_k} \, \leq  \,  1+\frac{1}{\gamma}+\frac{\log S_b-\log S_{m+1}}{\gamma\log\left(\frac{\gamma+1}{\gamma}\right)}  \, .
$$
This together with the fact that $S_{m+1} \geq  S_a$ proves~\eqref{cor_sum_S_log_result}.
\end{itemize}
\vspace*{-5ex}
\end{proof}

\bigskip

\begin{lemA} \label{lem_sum_S_square}
Let $(s_n)_{n\in\N}$ and $S_n$ be  as in Lemma~\ref{cor_cummulative_log_limit}. Let $a,b\in\N$ with $2\leq a<b$ and suppose  that $S_{a-1}>0$.  Then
$$
\sum_{k=a}^b \frac{s_k}{S_k^2} \, \leq  \,  \frac{1}{S_{a-1}} - \frac{1}{S_b}  \, .
$$
\end{lemA}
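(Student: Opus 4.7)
The plan is to use a telescoping argument. The key observation is the algebraic identity
$$
\frac{1}{S_{k-1}} - \frac{1}{S_k} \;=\; \frac{S_k - S_{k-1}}{S_{k-1} S_k} \;=\; \frac{s_k}{S_{k-1} S_k} \, ,
$$
valid whenever $S_{k-1} > 0$ (which holds for all $k \geq a$ since $s_n \geq 0$ implies $S_{k-1} \geq S_{a-1} > 0$).

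Since $(s_n)_{n \in \N}$ is non-negative, the partial sums are non-decreasing, so $S_{k-1} \leq S_k$ for every $k$. Consequently, for each $k \geq a$,
$$
\frac{s_k}{S_k^2} \;\leq\; \frac{s_k}{S_{k-1} S_k} \;=\; \frac{1}{S_{k-1}} - \frac{1}{S_k} \, .
$$
Summing this inequality over $k = a, a+1, \dots, b$ gives a telescoping sum:
$$
\sum_{k=a}^{b} \frac{s_k}{S_k^2} \;\leq\; \sum_{k=a}^{b} \left( \frac{1}{S_{k-1}} - \frac{1}{S_k} \right) \;=\; \frac{1}{S_{a-1}} - \frac{1}{S_b} \, ,
$$
which is exactly the desired bound.

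There is no real obstacle here; the argument is a standard telescoping trick and requires only the hypothesis $S_{a-1} > 0$ (used to ensure all reciprocals are well-defined) together with monotonicity of $(S_n)$ (which is automatic from $s_n \geq 0$).
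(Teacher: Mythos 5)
Your proof is correct and takes essentially the same approach as the paper: both use the identity $\frac{1}{S_{k-1}}-\frac{1}{S_k}=\frac{s_k}{S_{k-1}S_k}$, bound $\frac{s_k}{S_k^2}$ by this quantity via monotonicity of $(S_n)$, and telescope.
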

\begin{proof}
The desired inequality immediately follows from the observation that for any integer $k\geq 2$,
\begin{equation*}
\frac{1}{S_{k-1}}-\frac{1}{S_k} \, = \, \frac{s_k}{S_{k-1}S_k} \, =  \,
\frac{S_k}{S_{k-1}}\cdot\frac{s_k}{S_k^2}
\end{equation*}
and so
\begin{equation*}
\frac{s_k}{S_k^2}  \, \leq  \, \frac{1}{S_{k-1}}-\frac{1}{S_k}  \, .
\end{equation*}
\end{proof}

\vspace*{3ex}

\begin{lemA} \label{cor_sum_S_square} Let $(s_n)_{n\in\N}$ and $S_n$ be  as in Lemma~\ref{cor_cummulative_log_limit}.  Let $\gamma>0$  and  let
$
\tilde{S}_n:=\max\left(\gamma,S_n\right).
$
Then
$$
\sum_{k=1}^{\infty}\frac{s_k}{\tilde{S}_k^2}  \, <  \, \frac{2\gamma+1}{\gamma^2}.
$$
\end{lemA}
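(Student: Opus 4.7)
\emph{Proof proposal for Lemma A4.} The plan is to separate the sum at the first index where the cumulative sum $S_n$ exceeds the threshold $\gamma$, bounding the ``pre-threshold'' part crudely and the ``post-threshold'' part via the previous lemma.

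Concretely, let $m:=\min\{n\in\N:S_n>\gamma\}$, with $m:=\infty$ if no such $n$ exists. If $m=\infty$, then $\tilde S_k=\gamma$ for every $k$ and $S_\infty\leq\gamma$, so $\sum_{k=1}^\infty s_k/\tilde S_k^2 = S_\infty/\gamma^2 \leq 1/\gamma < (2\gamma+1)/\gamma^2$, and we are done. Otherwise $m\geq 1$ (since $S_0=0\leq\gamma$), and I would split
\[
\sum_{k=1}^{\infty}\frac{s_k}{\tilde S_k^2}
\ =\ \sum_{k=1}^{m-1}\frac{s_k}{\gamma^2}
\ +\ \frac{s_m}{S_m^2}
\ +\ \sum_{k=m+1}^{\infty}\frac{s_k}{S_k^2},
\]
using that $\tilde S_k=\gamma$ for $k<m$ and $\tilde S_k=S_k$ for $k\geq m$.

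For the first summand, the minimality of $m$ gives $S_{m-1}\leq\gamma$, hence $\sum_{k=1}^{m-1}s_k/\gamma^2 = S_{m-1}/\gamma^2\leq 1/\gamma$. For the isolated term, $s_m\leq 1$ and $S_m>\gamma$ yield $s_m/S_m^2\leq 1/S_m^2 < 1/\gamma^2$. For the tail, I would apply Lemma~A3 with $a:=m+1$ (whose hypothesis $S_{(m+1)-1}=S_m>0$ is satisfied) and any $b>m$ to get $\sum_{k=m+1}^{b}s_k/S_k^2 \leq 1/S_m-1/S_b \leq 1/S_m$; letting $b\to\infty$ and using $S_m>\gamma$ gives $\sum_{k=m+1}^\infty s_k/S_k^2 \leq 1/S_m < 1/\gamma$.

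Adding the three bounds produces $\sum_{k=1}^\infty s_k/\tilde S_k^2 < 1/\gamma + 1/\gamma^2 + 1/\gamma = (2\gamma+1)/\gamma^2$, as required. There is no real obstacle here: the only subtlety is making sure Lemma~A3 is applied in a range where its hypothesis $S_{a-1}>0$ is legitimate, which is why I isolate the single term $s_m/S_m^2$ rather than trying to start the telescoping sum at $k=m$ (where $S_{m-1}$ could vanish when $m=1$). The strict inequality in the conclusion comes for free from the strict inequality $S_m>\gamma$.
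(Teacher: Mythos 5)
Your proof is correct and follows essentially the same strategy as the paper: split the sum at the first index $m$ with $S_m>\gamma$, bound the head using $\tilde S_k\ge\gamma$, and bound the tail by applying Lemma~A3 with $a=m+1$ so that the hypothesis $S_{a-1}=S_m>0$ is satisfied. The only cosmetic differences are that you isolate the single term $s_m/S_m^2$ rather than folding it into the head bound, and you explicitly dispose of the degenerate case where $S_n\le\gamma$ for all $n$; both variants yield the same estimate $(2\gamma+1)/\gamma^2$.
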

\begin{proof}
As in the proof of Lemma \ref{cor_sum_S_log}, let $m \in \N$ be the smallest integer such that $\tilde{S}_{m}>\gamma$. This implies  that $S_m > \gamma$,  $S_n \leq  \gamma $ if  $n < m$  and
$
S_m \leq \gamma+1.
$
Then, by making use of Lemma~\ref{lem_sum_S_square} with $a=m+1$,  it is easily verified that
$$
\sum_{k=1}^{\infty}\frac{s_k}{\tilde{S}_k^2} \, =  \,
\sum_{k=1}^{m}\frac{s_k}{\tilde{S}_k^2} \, +\sum_{k=m+1}^{\infty}
\frac{s_k}{\tilde{S}_k^2} \, \leq \, \frac{\gamma+1}{\gamma^2}+\frac{1}{S_m}<\frac{\gamma+1}{\gamma^2}+\frac{1}{\gamma} \, = \, \frac{2\gamma+1}{\gamma^2} \, .
$$
\end{proof}

\vspace*{4ex}

\noindent{\bf Acknowledgements:  \, } SV would like to take this opportunity to thank Alison Butterworth and Dave Scott  for their calmness, frankness and amazing power to make sense of the pressures in a crazy world. You guys have been absolute bricks!  Next, comes the inevitable  mention of the now seventeen year old dynamic duo - Ayesha and Iona.  You  continue to push the boundaries of my world. Long may it last.  Also many congratulations for your brilliant achievements last year. I remain a very proud and lucky Papa V!  Finally, many thanks to  Bridget for making me appreciate the finer aspects of life - including of course leprechauns and  soda bread!

EZ acknowledges the comprehensive support of Aljona, Alyssa and Alina. The discussions about causes of everything in the world has helped a lot with understanding of the limits of my knowledge and patience, while joint visits to playgrounds has provided a very lively example of equidistribution (in the playground). Many thanks for all the inspiration and motivation you give!

\vspace{13mm}

\vspace{7mm}


\vspace{5mm}

\noindent Andrew D. Pollington: National Science Foundation

\vspace{-2mm}

\noindent\phantom{Andrew D. Pollington: }Arlington, VA 22230, USA.


\noindent\phantom{Andrew D. Pollington: }e-mail: adpollin@nsf.gov


\vspace{5mm}

\noindent Sanju Velani: Department of Mathematics,
University of York,

\vspace{-2mm}

\noindent\phantom{Sanju Velani: }Heslington, York, YO10
5DD, England.


\noindent\phantom{Sanju Velani: }e-mail: sanju.velani@york.ac.uk


\vspace{5mm}

\noindent Agamemnon Zafeiropoulos: Department of Analysis and Computational Number Theory,

\vspace{-2mm}

\noindent\phantom{Agamemnon Zafeiropoulos: }Graz University of Technology, TU Graz, Austria.


\noindent\phantom{Agamemnon Zafeiropoulos: }e-mail: mzafeiropoulos@gmail.com

\vspace{5mm}

\noindent Evgeniy Zorin: Department of Mathematics,
University of York,

\vspace{-2mm}

\noindent\phantom{Evgeniy Zorin: }Heslington, York, YO10
5DD, England.


\noindent\phantom{Evgeniy Zorin: }e-mail: evgeniy.zorin@york.ac.uk


\end{document}